\documentclass[12pt]{article} 

\usepackage[margin=2.5cm]{geometry}
\geometry{a4paper}
\usepackage[english]{babel}
\usepackage{hyperref}
\usepackage{amsmath}
\usepackage{amsthm}
\usepackage{amssymb}
\usepackage{ucs}
\usepackage{enumerate}
\usepackage[utf8x]{inputenc}
\usepackage[T1]{fontenc}
 

\DeclareMathOperator{\supp}{\mathrm{supp}}

\newcommand{\Aut}{\mathrm{Aut}}

  \newcommand{\R}{\mathbb R}
  \newcommand{\N}{\mathbb N}
  \newcommand{\Q}{\mathbb Q}
  \newcommand{\Z}{\mathbb Z}
  
  \newcommand{\LL}{\mathrm L}

  \newcommand{\inv}{^{-1}}

  \renewcommand{\ker}{\mathrm{Ker}\,}

  \renewcommand{\leq}{\leqslant}
  \renewcommand{\geq}{\geqslant}
  \newcommand{\abs}[1]{\left\lvert #1\right\rvert}
  \newcommand{\norm}[1]{\left\lVert #1\right\rVert}

  \newcommand{\la}{\left\langle}
  \newcommand{\ra}{\right\rangle}
  
  \newcommand{\onto}{\twoheadrightarrow}

\newcommand{\BN}{{\mathbb{N}}}
\newcommand{\BR}{{\mathbb{R}}}
\newcommand{\BC}{{\mathbb{C}}}

\newcommand{\gb}{\beta}

\newcommand{\gC}{\Gamma}

\newcommand{\gO}{\Omega}

\newcommand{\ga}{\alpha}

\newcommand{\SL}{\text{SL}}

\def\Aut{\text{Aut}}
\def\Out{\text{Out}}

\newtheorem{thm}{Theorem}[section]
\newtheorem{cor}[thm]{Corollary}
\newtheorem{lem}[thm]{Lemma}
\newtheorem{prop}[thm]{Proposition}
\newtheorem*{clm}{Claim}

\theoremstyle{definition}

\newtheorem{qu}[thm]{Question}

\newtheorem*{ack}{Acknowledgements}

\newtheorem{defn}[thm]{Definition}
\newtheorem*{rmq}{Remark}
\newtheorem{exam}[thm]{Example}

\title{Infinitesimal topological generators and quasi non-archimedean topological groups}

\author{Tsachik Gelander\footnote{Research supported by the ISF, Moked grant
2095/15.}  and François Le Maître\footnote{Research supported by the Interuniversity Attraction Pole DYGEST and Projet ANR-14-CE25-0004 GAMME.}}
\date{}
\begin{document}

\maketitle

\begin{abstract}
We show that connected separable locally compact groups are infinitesimally finitely generated, meaning that there is an integer $n$ such that every neighborhood of the identity contains $n$ elements generating a dense subgroup.
We generalize a theorem of Schreier and Ulam by showing that any separable connected compact group is infinitesimally $2$-generated.

 Inspired by a result of Kechris, 
we introduce the notion of a quasi non-archimedean group. We observe that full groups are quasi non-archimedean, and that every continuous homomorphism from an infinitesimally finitely generated group into a quasi non-archimedean group is trivial. 
We prove that a locally compact group is quasi non-archimedean if and only if it is totally disconnected, and provide various examples which show that the picture is much richer for Polish groups. In particular, we get an example of a Polish group which is infinitesimally $1$-generated but totally disconnected, strengthening Stevens' negative answer to Problem 160 from the Scottish book. 

\end{abstract}

\section{Introduction}

One of the simplest invariants one can come up with for a topological group $G$ is its topological rank $t(G)$, that is, the minimum number of elements needed to generate a dense subgroup of $G$. For this invariant to have a chance to be finite, one needs to assume that $G$ is separable since every finitely generated group is countable.

The oldest result in this topic is probably due to Kronecker in 1884 \cite{zbMATH03002825}, and says that an $n$-tuple $(a_1,...,a_n)$ of real numbers projects down to a topological generator of $\mathbb T^n=\R^n/\Z^n$ if and only if  $(1,a_1,...,a_n)$ is $\Q$-linearly independent. Since such $n$-tuples always exist, the topological rank of the compact connected abelian group $\mathbb T^n$ is equal to one.

Using this result, it is then an amusing exercise to show that $t(\R^n)=n+1$. This means that as a topological group, $\R^n$ remembers its vector space dimension. In particular, we see that the topological rank sometimes contains useful information (another somehow similar instance of this phenomenon was recently discovered by the second-named author for full groups, see \cite{gentopergo}).

In order to deal with general connected Lie groups, it is useful to introduce the following definition.

\begin{defn}
The \textbf{infinitesimal rank} of a topological group $G$ is the minimum $n\in\N\cup\{+\infty\}$ such that every neighborhood of the identity in $G$ contains $n$ elements $g_1,...,g_n$ which generate a dense subgroup of $G$. We denote it by $t_I(G)$. 
\end{defn}

One can easily show that $t_I(\R^n)=t(\R^n)=n+1$. This is relevant for the study of the infinitesimal rank of real Lie groups because if we know that the Lie algebra of a connected Lie group $G$ is generated (as a Lie algebra) by $n$ elements, then using the fact fact that $t_I(\R)=2$ we can deduce that $t_I(G)\leq 2n$ (see the well-known Lemma \ref{prop:liealgtopogen}).

For various classes of connected Lie groups one can say more. In particular this is the situation for compact connected Lie groups, in which case a uniform result is useful in view of the important role of compact groups in the structure theory of locally compact groups.
 Auerbach showed that for every compact connected Lie group $G$, one has $t_I(G)\leq 2$ \cite{Auerbach1934}. Moreover, he could show that the set of pairs of topological generators of $G$ has full measure, which then led Schreier and Ulam to the following general result, for which we will include a short proof.

\begin{thm}[Schreier-Ulam, \cite{Schreier1935}]\label{thm:SUoriginal}
Let $G$ be a connected compact metrisable group. Then almost every pair of elements of $G$ generates a dense subgroup of $G$. In particular, $t_I(G)=t(G)=2$ for any non-abelian such $G$.
\end{thm}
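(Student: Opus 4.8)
The plan is to reduce to the case of compact connected Lie groups, where the full-measure statement is precisely Auerbach's theorem recalled above, and then to transfer it to the general case through the projective limit structure of compact connected groups. First I would invoke the structure theory of compact groups: since $G$ is compact, connected and metrisable, it is the projective limit $G=\varprojlim_n G_n$ of a \emph{sequence} of compact connected Lie groups. Concretely, one chooses a decreasing sequence $(N_n)_n$ of closed normal subgroups forming a neighborhood basis at the identity with $\bigcap_n N_n=\{e\}$ and such that each quotient $G_n:=G/N_n$ is a Lie group; connectedness of $G_n$ follows from that of $G$, and the countability of the sequence is exactly what metrisability buys us. Writing $\pi_n\colon G\to G_n$ for the projections, the key elementary observation is that a subgroup $H\leq G$ is dense in $G$ if and only if $\pi_n(H)$ is dense in $G_n$ for every $n$: this uses that the $N_n$ form a neighborhood basis at the identity, so that any basic open subset of $G$ contains a coset of some $N_n$, which $\pi_n(H)$ must meet.

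Next I would run the measure argument. Let $\mu$ denote the Haar probability measure on $G$; since each $\pi_n$ is a continuous surjective homomorphism of compact groups, $(\pi_n)_*\mu$ is the Haar measure on $G_n$. Let $T_n\subseteq G_n\times G_n$ be the Borel set of topological generating pairs of $G_n$. By Auerbach's theorem, applicable since $G_n$ is a compact connected Lie group, $T_n$ has full measure. By the criterion from the previous step, the set of topological generating pairs of $G$ is exactly $\bigcap_n(\pi_n\times\pi_n)^{-1}(T_n)$. Each set $(\pi_n\times\pi_n)^{-1}(T_n)$ is Borel and of full $\mu\times\mu$ measure, and a countable intersection of full-measure sets again has full measure; hence almost every pair of elements of $G$ generates a dense subgroup.

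For the ``in particular'' clause, assume $G$ is non-abelian. Since every nonempty open subset of a compact group has positive Haar measure, for any neighborhood $U$ of the identity the product $U\times U$ has positive $\mu\times\mu$ measure and therefore meets the full-measure set of generating pairs; this produces a generating pair inside $U$, so $t_I(G)\leq 2$. Conversely $t(G)\leq t_I(G)$ always holds, and $t(G)\geq 2$ because the closure of a cyclic subgroup is abelian, so no single element can generate a dense subgroup of the non-abelian group $G$. Combining these gives $t_I(G)=t(G)=2$.

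I expect the main obstacle to be the careful justification of the density criterion in the first step, together with the measurability and pushforward bookkeeping (that $T_n$ is Borel, that its preimage is Borel of full measure, and that $(\pi_n)_*\mu$ is Haar). Once these soft points are in place, the full-measure conclusion is a routine countable-intersection argument and the rank computation is immediate.
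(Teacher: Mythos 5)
Your proof is correct, and its skeleton is the same as the paper's: both reduce, via the Peter--Weyl theorem plus metrisability (first countability), to a countable inverse limit of compact connected Lie groups, and both conclude by intersecting countably many full-measure sets of generating pairs. The one genuine difference is how the Lie case is settled. You invoke Auerbach's full-measure theorem as a black box, which is legitimate here since the paper itself recalls that result in the introduction; the paper's proof instead re-derives the Lie case from the reductive structure: its Lemma~3.2 shows that $H\leq G$ is dense if and only if both $HG'$ and $HZ(G)$ are dense, so it suffices that $(x,y)$ project to a generating pair of the adjoint semisimple quotient $G/Z$ and that $x$ projects to a topological generator of the torus $G/G'$, each of which holds with Haar probability one (cited facts). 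The paper's route exposes \emph{why} the Lie case is true and isolates a density criterion (Lemma~3.2) reused later in the general separable case; your route is shorter and closer to the historical Schreier--Ulam argument, at the cost of treating the hardest ingredient as a citation.

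One small repair: your justification of the density criterion is off as stated. A coset $gN_n$ maps to a \emph{single point} $\pi_n(g)$ of $G_n$, and a dense subgroup of the (non-discrete) Lie group $G_n$ need not contain any prescribed point, so ``$\pi_n(H)$ must meet it'' does not follow. The standard fix: $\pi_n(\overline{H})$ is compact, hence closed, and contains the dense set $\pi_n(H)$, so $\pi_n(\overline{H})=G_n$, i.e.\ $\overline{H}N_n=G$ for every $n$; writing any $g\in G$ as $g=h_nk_n$ with $h_n\in\overline{H}$, $k_n\in N_n$, and using that the $N_n$ shrink to the identity, we get $h_n\to g$ and hence $g\in\overline{H}$. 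With this patch, and the (correct) observation that each $T_n$ is $G_\delta$ hence Borel, everything else in your argument — the pushforward of Haar measure, the countable intersection, and the rank computation in the non-abelian case — goes through as written.
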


 Note that there is a vast area between metrizable compact groups and separable ones; for instance ${(\mathbb T^1)}^\R$ is compact separable, but not metrizable. Conversely, being separable is a minimal assumption for a group to have finite topological rank.
Going back to the $n$-torus, Kronecker's result admits a far-reaching generalization due to Halmos and Samelson, which settles the abelian case.

\begin{thm}[Halmos-Samelson, {\cite[Corollary]{MR0006543}}] The topological rank of every connected separable compact abelian group is equal to one.
 \end{thm}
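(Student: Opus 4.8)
The plan is to prove that every connected separable compact abelian group $G$ has topological rank equal to one, i.e. that it is \emph{monothetic} (contains a dense cyclic subgroup). My approach is to pass to the Pontryagin dual and reformulate the statement as a condition on the dual group. Recall that for a compact abelian group $G$, the Pontryagin dual $\widehat G$ is a discrete abelian group; connectedness of $G$ corresponds to torsion-freeness of $\widehat G$, and separability (second countability, equivalently metrizability for compact groups) corresponds to $\widehat G$ being countable. So I would reduce the problem to showing: if $A$ is a countable torsion-free abelian group, then $\widehat A$ is monothetic.

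Next I would translate monotheticity into a statement about characters. An element $g \in G$ generates a dense subgroup if and only if it is not annihilated by any nontrivial character, i.e. $\chi(g) = 1$ for a character $\chi \in \widehat G$ forces $\chi = 1$; this is just the fact that the closure of $\langle g\rangle$ is the annihilator of $\{\chi : \chi(g)=1\}$. Identifying $G$ with $\widehat{\widehat G} = \Hom(A, \cir)$ where $A = \widehat G$, an element of $G$ is exactly a homomorphism $\varphi\colon A \to \cir$, and I need to produce a single $\varphi$ that is injective (equivalently, whose kernel is trivial), since $A$ torsion-free means an injective $\varphi$ has dense image in the appropriate sense. The key reformulation is thus: \emph{every countable torsion-free abelian group $A$ embeds into the circle group $\cir = \R/\Z$.}

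The heart of the argument is this embedding statement, and I expect it to be the main obstacle. First I would observe that a torsion-free abelian group embeds in its divisible hull $A \otimes \Q$, which is a $\Q$-vector space, and $A$ countable forces this to be a countable-dimensional $\Q$-vector space, i.e. a direct sum $\bigoplus_{i} \Q$ over a countable index set. So it suffices to embed $\Q^{(\omega)} = \bigoplus_{i<\omega}\Q$ into $\cir$. Since $\cir \cong \Q/\Z \oplus (\bigoplus_{\mathfrak c}\Q)$ as abstract abelian groups, it contains a copy of $\Q$-vector space of continuum dimension and in particular a copy of $\Q^{(\omega)}$; alternatively and more concretely, I would choose a $\Q$-linearly independent family $(\theta_i)_{i<\omega}$ of real numbers together with $1$ (e.g. an infinite subset of a transcendence basis, or numbers like $\log p_i$ for distinct primes), and map the $i$-th basis vector to $\theta_i \bmod \Z$; $\Q$-linear independence of $\{1\}\cup\{\theta_i\}$ guarantees injectivity of the resulting homomorphism $\Q^{(\omega)} \to \cir$.

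Finally I would assemble these pieces and verify the duality bookkeeping. Composing the inclusion $A \hookrightarrow A\otimes\Q \cong \Q^{(\omega)}$ with the embedding $\Q^{(\omega)}\hookrightarrow\cir$ yields an injective character $\varphi\colon A \to \cir$, i.e. an element $g\in G = \widehat A$ whose annihilator in $A$ is trivial. By Pontryagin duality the annihilator being trivial means no nontrivial character of $G$ kills $g$, hence $\overline{\langle g\rangle} = G$ and $g$ is a topological generator. Therefore $t(G)=1$. The one delicate point to check carefully is the correspondence between the topological properties of $G$ (connected, separable) and the algebraic properties of $\widehat G$ (torsion-free, countable) — in particular that separability of the compact group $G$ is equivalent to metrizability and hence to countability of $\widehat G$ — and this correspondence, together with the existence of an infinite $\Q$-independent family in $\R$, is where essentially all the content resides.
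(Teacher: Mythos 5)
Your overall strategy --- dualize, reduce to embedding the discrete dual $\widehat G$ into the circle, and build the embedding via the divisible hull --- is exactly the right one; it is in fact the Halmos--Samelson argument (note that the paper does not prove this theorem at all, it only cites it). But there is a genuine gap at your very first reduction: you assert that for compact groups separability is equivalent to second countability/metrizability, and hence to countability of $\widehat G$. This is false, and the paper explicitly warns about it: the group $(\mathbb T^1)^{\R}$ is compact, connected, abelian and separable (by the Hewitt--Marczewski--Pondiczery theorem, a product of at most $2^{\aleph_0}$ separable spaces is separable), yet it is not metrizable and its dual $\bigoplus_{\R}\Z$ is uncountable. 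As written, your proof therefore only establishes the metrizable case, which is strictly weaker than the stated theorem; the separable non-metrizable case is precisely where the content of Halmos--Samelson lies, and it is the case the paper actually needs for Theorem \ref{thm:SU}.

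The good news is that the gap is repairable within the same architecture. The correct dual translation of separability is a cardinality bound, not countability: a separable compact Hausdorff (hence regular) space has weight at most $2^{\aleph_0}$, and the weight of an infinite compact abelian group equals $\mathrm{Card}(\widehat G)$; so $\widehat G$ is torsion-free of cardinality at most $2^{\aleph_0}$. Now run your argument with this bound: $\widehat G$ embeds into its divisible hull, a $\Q$-vector space of dimension at most $2^{\aleph_0}$, and since $\R$ contains a $\Q$-linearly independent family of cardinality $2^{\aleph_0}$ together with $1$ (take a Hamel basis containing $1$ and discard $1$ from it), that vector space embeds into $\cir=\R/\Z$. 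Your duality bookkeeping at the end is correct and, as you can check, does not even use torsion-freeness: $g\in G=\widehat{\widehat G}$ topologically generates $G$ if and only if the corresponding character $\widehat G\to\cir$ is injective. With the countability hypothesis replaced throughout by the bound $\mathrm{Card}(\widehat G)\leq 2^{\aleph_0}$, the proof goes through and yields the full separable statement.
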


Our first result is a generalization of Schreier and Ulam's Theorem to the separable compact case. One of the difficulties is that we cannot use their Haar measure argument anymore.

\begin{thm}[see Theorem \ref{thm:SU}]\label{thm:SU1}
Let $G$ be a separable compact connected group. If $G$ is non-abelian, then $t_I(G)=2$, and if $G$ is abelian, then $t_I(G)=1$. 
\end{thm}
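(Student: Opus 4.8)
The plan is to reduce to the structure theory of compact connected groups and to localise the condition "generates a dense subgroup'' at each Lie quotient. Writing $G=\varprojlim_{i\in I}G_i$ as an inverse limit of compact connected Lie groups along surjective homomorphisms $\pi_i\colon G\to G_i$, I would first record the elementary but crucial observation that a subset $S\subseteq G$ generates a dense subgroup if and only if $\pi_i(S)$ generates a dense subgroup of $G_i$ for every $i$. Indeed $\overline{\langle S\rangle}$ is then a closed subgroup surjecting onto each $G_i$, and since the system is directed every basic open set of $G$ has the form $\pi_i^{-1}(W)$, so such a subgroup must be all of $G$. Thus it suffices to produce, inside a prescribed neighbourhood $U$ of the identity, two elements whose images generate each $G_i$ densely (one element in the abelian case).

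By Auerbach's theorem \cite{Auerbach1934}, the set of topologically generating pairs of each compact connected Lie group $G_i$ has full Haar measure; pushing the Haar measure of $G\times G$ forward along $\pi_i\times\pi_i$ shows that the set $A_i\subseteq G\times G$ of pairs projecting to a generating pair of $G_i$ is conull. When $G$ is metrisable the index set $I$ may be taken countable, so $\bigcap_i A_i$ is conull and meets $U\times U$, recovering Theorem~\ref{thm:SUoriginal}. The entire difficulty is that for a merely separable $G$ the system is genuinely uncountable (for instance $\mathbb{T}^{2^{\aleph_0}}$ is separable and non-metrisable), and an uncountable intersection of conull sets can be empty; in fact for $\mathbb{T}^{2^{\aleph_0}}$ almost no pair generates. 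This is exactly where the Haar-measure argument of Schreier and Ulam collapses, and where separability must instead be used as the cardinality bound $2^{\aleph_0}$ rather than through measure.

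To replace measure by an explicit construction I would invoke the structure of compact connected groups: there is a central extension $\tilde G:=\bigl(\prod_{j\in J}\tilde S_j\bigr)\times A\twoheadrightarrow G$, where the $\tilde S_j$ are simply connected compact simple Lie groups and $A=Z(G)_0$ is compact connected abelian. Since quotient maps are open, it is enough to generate $\tilde G$ infinitesimally and push forward, and separability of $G$ forces $|J|\le 2^{\aleph_0}$ with $A$ separable. The abelian factor is handled by the infinitesimal form of Halmos--Samelson, namely $t_I(A)=1$, which itself rests on the same cardinality input: via Pontryagin duality one embeds the torsion-free dual $\widehat A$, of cardinality $\le 2^{\aleph_0}$, into $\mathbb{T}$ with prescribed small values on finitely many characters. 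Since $\prod_j\tilde S_j$ is perfect while $A$ is abelian, the two factors share no nontrivial common quotient, so a Goursat argument shows that a pair surjecting onto $\prod_j\tilde S_j$ and onto $A$ generates $\tilde G$ densely. It thus remains to generate the product $\prod_j\tilde S_j$.

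For finitely many simple factors, Goursat's lemma reduces dense generation to (i) generating each factor, granted by Auerbach, and (ii) pairwise independence, meaning that no isomorphism between central quotients of two factors matches up the chosen generators; the factorwise criterion then upgrades this to the infinite product. The crux, and the step I expect to be the main obstacle, is to produce a single family $\{(g_j,h_j)\}_{j\in J}$ of generating pairs that is pairwise independent across all of the uncountably many factors. A greedy transfinite recursion fails for the very reason that killed the measure argument: at a stage $\alpha$ one would have to avoid $|\alpha|$ forbidden sets, and a union of $2^{\aleph_0}$ null sets can exhaust the space. Instead I would group the indices by the at most countably many isomorphism types of compact simple Lie groups and, for each type with model $\tilde S$, observe that the correlation classes among generating pairs of $\tilde S$ are lower-dimensional, so that the quotient of the (full-dimensional) set of generating pairs by the finite-dimensional correlation relation still has cardinality $2^{\aleph_0}$. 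Choosing a transversal of that size and assigning its elements injectively to the $\le 2^{\aleph_0}$ indices of that type yields the whole pairwise-independent family at once, bypassing any cumulative recursion. The infinitesimal requirement is then essentially free: a neighbourhood of the identity in a product constrains only finitely many coordinates, and on each of those one still finds a size-$2^{\aleph_0}$ supply of independent generating pairs inside the prescribed neighbourhood, so the entire family may be chosen within $U$.
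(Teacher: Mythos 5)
Your proposal is correct in outline and, at its heart, it is the paper's proof: in both, the decisive step is to handle the up to $2^{\aleph_0}$ simple factors by choosing, inside the prescribed neighbourhood, topologically generating pairs that are pairwise not matched by any isomorphism, exploiting that the relevant equivalence classes (automorphism orbits) are $\dim S$-dimensional inside a $2\dim S$-dimensional supply of generating pairs, and that a neighbourhood of the identity in a product constrains only finitely many coordinates. The routes to that point differ in the plumbing. You lift $G$ to the cover $\tilde G=\bigl(\prod_{j}\tilde S_j\bigr)\times Z(G)^\circ$ and push generators forward along the quotient map, which is immediate; the paper instead uses $G=Z(G)G'$, pushes $G'$ down to its adjoint form, and must then lift generators back through a profinite central extension, which is exactly the content of Proposition \ref{prop:K-profinite} (via Lemmas \ref{lem:finite-cover} and \ref{lem:pro-central}). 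Your direction spares you that lemma, at the price of importing the full structure theorem for compact connected groups. You splice the semisimple and abelian parts together by Goursat (a topologically perfect group and an abelian group share no nontrivial quotient), where the paper uses its Claim resting on perfectness of $G'$; and you re-derive $t_I=1$ for the abelian part by redoing Halmos--Samelson with prescribed small character values, where the paper gets it from Halmos--Samelson as a black box via the finite-index trick of Lemma \ref{lem:tlabelian} ($\overline{\langle g^n\rangle}$ has finite index in the connected group, hence equals it) --- the paper's version is shorter and easier to verify. Note also that for more than two simple factors your appeal to ``Goursat'' is really Lemma \ref{lem:eso-rel}, whose proof needs the commutator/perfectness argument, not just the two-factor Goursat lemma.

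Two cautions. First, your cardinality step --- ``a full-dimensional set modulo a finite-dimensional correlation relation still has $2^{\aleph_0}$ classes'' --- is not justified by measure (or category) counting alone: it is consistent with ZFC that a full-measure set be covered by fewer than $2^{\aleph_0}$ null sets, so Auerbach's full-measure theorem by itself does not produce continuum many classes. The paper avoids this by producing an \emph{open} set $U_1\times U_2$ of generating pairs (Zassenhaus neighbourhood plus the logarithm criterion), shrinking it so that only inner automorphisms can match pairs (finiteness of $\Out(S)$ plus compactness), and then taking a geometric section transversal to the conjugation-orbit foliation: such a section has cardinality $2^{\aleph_0}$ and meets each compact $\dim S$-dimensional orbit in a finite set, which is a ZFC-valid count. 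Your argument should be repaired along these lines. Second, your parenthetical assertion that for $\mathbb T^{2^{\aleph_0}}$ ``almost no pair generates'' is unsubstantiated, and would in fact answer negatively the Question posed in the paper's introduction (whether generating pairs have full measure in the separable connected compact case), which the authors leave open; fortunately nothing in your argument depends on it.
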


The fact that the topological rank $t(G)=2$ was proved by Hoffmann and
Morris \cite[Theorem. 4.13]{MR1082789}. Note that it follows that $t_I(G)=t(G)$ for separable compact connected group.

 It is natural to ask if the following stronger statement is true:

\begin{qu}
Let $G$ be a separable compact connected group. Is the set of pairs in $G$ which topologically generate $G$ necessarily of full measure in $G\times G$?
\end{qu}

Say that a topological group is \textbf{infinitesimally finitely generated} if it has finite infinitesimal rank. Using the previous result, and the solution to Hilbert's fifth problem, we establish the following result, which was noted by Schreier and Ulam in the abelian case (see the last paragraph of
\cite{Schreier1935}). 
\begin{thm}[see Theorem. \ref{thm:infinitesimally finitely generated2}]\label{thm:infinitesimally finitely generated}
Let $G$ be a separable  locally compact group. Then $G$ is infinitesimally finitely generated if and only if $G$ is connected.
\end{thm}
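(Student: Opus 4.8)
The plan is to prove both implications. The forward direction (infinitesimal finite generation $\Rightarrow$ connectedness) is soft and I would establish it by contraposition, while the converse rests on the structure theory of connected locally compact groups coming from the solution to Hilbert's fifth problem, combined with Theorem \ref{thm:SU1}.

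For "$G$ not connected $\Rightarrow t_I(G)=+\infty$", I would exhibit a single bad neighborhood. If $G$ is not connected, its identity component $G^0$ is a proper closed normal subgroup and $D:=G/G^0$ is a nontrivial totally disconnected locally compact group, which has a proper open subgroup $V$ (van Dantzig's theorem gives a compact open subgroup, and if that equals $D$ then $D$ is a nontrivial profinite group, which still has a proper open subgroup). Its preimage $U:=q^{-1}(V)$ under the quotient map $q\colon G\to D$ is a proper open --- hence closed --- subgroup of $G$ and a neighborhood of the identity. Any subgroup generated by a subset of $U$ stays inside the subgroup $U\ne G$, so its closure is contained in $U$ and cannot be dense. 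Thus $U$ contains no dense-generating subset at all, forcing $t_I(G)=+\infty$; contraposition yields the forward direction.

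For the converse, suppose $G$ is connected and separable. I would invoke the structure theorem (Iwasawa, Montgomery--Zippin): $G$ has a maximal compact subgroup $K$, which is connected, and there are one-parameter subgroups $\gamma_1,\dots,\gamma_n\colon\R\to G$ such that the product map
\[
K\times\R^n\to G,\qquad (k,t_1,\dots,t_n)\mapsto k\,\gamma_1(t_1)\cdots\gamma_n(t_n),
\]
is a homeomorphism. In particular it is surjective, so $G=K\,\gamma_1(\R)\cdots\gamma_n(\R)$ and hence $K$ together with the lines $\gamma_i(\R)$ generate $G$ as an abstract group. Moreover $K$, being the continuous (projection) image of $K\times\R^n\cong G$, is separable, so Theorem \ref{thm:SU1} applies to $K$. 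Now fix a neighborhood $U$ of the identity in $G$. Using Theorem \ref{thm:SU1} pick $a,b\in U\cap K$ with $\overline{\langle a,b\rangle}=K$, and for each $i$ pick small parameters $s_i,s_i'$ with $s_i/s_i'$ irrational and $\gamma_i(s_i),\gamma_i(s_i')\in U$. Since $s_i\Z+s_i'\Z$ is dense in $\R$ and $\gamma_i$ is a continuous homomorphism, $\gamma_i(\R)\subseteq\overline{\langle\gamma_i(s_i),\gamma_i(s_i')\rangle}$. The closed subgroup generated by the finite set $S=\{a,b\}\cup\{\gamma_i(s_i),\gamma_i(s_i'):1\le i\le n\}\subseteq U$ therefore contains $K$ and every $\gamma_i(\R)$, hence contains $K\,\gamma_1(\R)\cdots\gamma_n(\R)=G$. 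So every neighborhood of the identity contains the $2+2n$ elements of $S$ generating a dense subgroup, giving $t_I(G)\le 2+2n<+\infty$.

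The main obstacle is the converse, and precisely the fact that the decomposition $G\cong K\times\R^n$ is only a homeomorphism and not a group isomorphism; what rescues the argument is that the product map is nonetheless surjective, so it suffices to densely generate $K$ and each line $\gamma_i(\R)$ separately. The other essential point --- and the reason the bound stays finite even when $K$ is an inverse limit of compact Lie groups of unbounded dimension --- is that the entire compact connected factor $K$ is absorbed by the two generators supplied by Theorem \ref{thm:SU1}, rather than by approximating $G$ through Lie quotients, whose dimensions (and hence naive generator counts) would blow up.
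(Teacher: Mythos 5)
Your proof is correct, and the forward direction (van Dantzig plus the observation that a proper open subgroup is closed, hence contains no topological generating set) is exactly the argument the paper gives. The converse, however, follows a genuinely different route. The paper does not invoke Iwasawa's splitting theorem; it applies the Gleason--Yamabe theorem (Theorem \ref{Gleeson--Yamabe}) to obtain a compact normal subgroup $K\lhd G$ with $G/K$ a Lie group, and then analyses the resulting extension: writing $K^\circ$ for the identity component of $K$, it notes that $G/K^\circ$ has the profinite normal subgroup $K/K^\circ$ with Lie quotient $G/K$ (infinitesimally finitely generated by Lemma \ref{prop:liealgtopogen}), so $G/K^\circ$ is infinitesimally finitely generated by the profinite-lifting result (Proposition \ref{prop:K-profinite}); it handles $K^\circ$ by Theorem \ref{thm:SU}; and it concludes via the subadditivity estimate $t_I(G)\le t_I(K^\circ)+t_I(G/K^\circ)$ of Lemma \ref{lemG/N-infinitesimally finitely generated}. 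Your argument replaces this extension analysis with a stronger structural input --- the Iwasawa--Montgomery--Zippin theorem that the product map $K\times\R^n\to G$ is a homeomorphism for a maximal compact (connected, separable) subgroup $K$ --- which lets you write down explicit generators: two for $K$ from Theorem \ref{thm:SU1}, and two per one-parameter subgroup via the irrational-ratio trick (the same trick that powers the paper's Lemma \ref{prop:liealgtopogen}, where $t_I(\R)=2$ is exploited). What your route buys is directness and a clean explicit bound $t_I(G)\le 2n+2$, with no need for Proposition \ref{prop:K-profinite} or Lemma \ref{lemG/N-infinitesimally finitely generated}; what the paper's route buys is economy of input, since Gleason--Yamabe is a more basic consequence of Hilbert's fifth problem than Iwasawa's global splitting, and the profinite-lifting machinery is developed in the paper and used independently. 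Both proofs ultimately rest on the same two pillars: Theorem \ref{thm:SU1} for the compact connected part, and the solution to Hilbert's fifth problem for the Lie or Euclidean part.
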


Let us now leave the realm of locally compact groups for a moment and discuss the class of \textbf{Polish groups}, i.e. separable topological groups whose topology admits a compatible metric. These groups abound in analysis, for instance the unitary group of a separable Hilbert space or the group of measure-preserving transformations of a standard probability space are Polish groups. Moreover, they form a robust class of groups, e.g. every countable product of Polish groups is Polish (see \cite{MR2455198} for other properties of this flavour). Recall that a locally compact group is Polish if and only if it is second-countable.  

It is not hard to show that Theorem \ref{thm:infinitesimally finitely generated} fails for Polish groups: for instance $\R^\N$ is connected but not topologically finitely generated, in particular it is not infinitesimally finitely generated. The question of the converse is more interesting, even for the following weaker property. 

\begin{defn}
A topological group $G$ is \textbf{infinitesimally generated} if every neighbourhood of the identity generates $G$.
\end{defn}

Clearly every connected group is infinitesimally generated, and every infinitesimally finitely generated group is infinitesimally generated. Moreover it follows from van Dantzig's theorem that every infinitesimally generated locally compact group is connected.

\begin{qu}[Mazur's Problem 160 \cite{MR666400}]\label{qu: inf gen is connected}Must an infinitesimally generated Polish group be connected? 
\end{qu}

In \cite{zbMATH03966486}, Stevens exhibited the first examples of infinitesimally generated Polish group which are totally disconnected. We show that her examples actually have infinitesimal rank $2$ and then provide the following stronger negative answer to Question \ref{qu: inf gen is connected}.

\begin{thm}[see Theorem. \ref{thm:infrank1 but td}]\label{thm: strong no to mazur problem}There exists a Polish group of infinitesimal rank $1$ which is totally disconnected. 
\end{thm}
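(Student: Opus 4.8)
The plan is to realize the desired group as the completion $G$ of the circle $\BT=\R/\Z$ with respect to a \emph{finer} translation-invariant metric $d$, chosen so that $G$ is totally disconnected while a single small element still topologically generates. The reason for starting from $\BT$ rather than from $\R$ — which I expect is the source of the rank-$2$ examples obtained by re-examining Stevens' construction — is that in $\BT$ a single irrational $\alpha$ already has a dense cyclic subgroup $\langle\alpha\rangle$, and such $\alpha$ accumulate at $0$; thus $t_I(\BT)=1$, and the whole point is to transport this single-generator feature to a totally disconnected completion. Concretely I would fix a rapidly increasing sequence of integers and define $d$ through a sequence of finite ``resolutions'' of $\BT$ refining the usual topology; the identity map $(\BT,d)\to(\BT,\text{usual})$ is then continuous, giving a continuous surjection $G\onto\BT$ (which is harmless: $\BT$ is compact connected, hence not quasi non-archimedean, so the triviality result quoted above does not apply).

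Total disconnectedness I would obtain from the following clean criterion, which is the main reusable tool. Writing $\norm{g}=d(e,g)$, suppose there is a sequence $r_j\downarrow 0$ such that no element has norm in some interval $(r_j,r_j+\varepsilon_j)$. Then the closed ball $\overline{B}(e,r_j)=\{g:\norm{g}\le r_j\}$ coincides with the open ball of radius $r_j+\varepsilon_j$, hence is clopen; since the connected component $C$ of $e$ meets every such clopen set, $C\subseteq\overline{B}(e,r_j)$ for all $j$, so $C\subseteq\bigcap_j\overline{B}(e,r_j)=\{e\}$. As the norm is continuous and $\BT$ is dense, it suffices to arrange these gaps on the dense copy of $\BT$. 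Thus the design problem for total disconnectedness is exactly to build $d$ whose range of values has gaps accumulating at $0$.

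For infinitesimal rank $1$ the key point is that the clopen balls above must \emph{not} be subgroups — equivalently, $G$ must have no proper open subgroup, which is anyway forced once rank $1$ is established (an open subgroup is a neighbourhood of $e$, hence contains a topological generator, and, being closed, then equals $G$). This is also what resolves the apparent paradox that a ``small'' element could generate: in a non-archimedean group a small $a$ is trapped in an open subgroup and $\langle a\rangle$ stays small, whereas here the small clopen balls are not subgroups, so the multiples $a,2a,3a,\dots$ escape every fixed ball and can become dense. The substantive step is therefore to show that in every $d$-neighbourhood of $0$ there is an irrational $\alpha$ whose cyclic subgroup is \emph{$d$-dense} in $\BT$; since $d$ is finer than the usual metric this is stronger than ordinary equidistribution of $(n\alpha)_n$, and I would prove it either by a direct Diophantine construction tailored to the resolutions defining $d$, or by a Baire-category argument showing that the $d$-generators are comeagre in each neighbourhood.

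The hard part is reconciling these two demands with a single metric. A gappy range pushes $d$ toward ultrametric (profinite) behaviour, under which small elements generate only small subgroups and rank $1$ fails; conversely the circle-like richness needed for small generators resists total disconnectedness. Note also that $G$ cannot be locally compact, since a compact totally disconnected abelian group is profinite and has open subgroups; hence $d$ must be non-totally-bounded, and this must be balanced against still possessing arbitrarily small generators. I would proceed by first making Stevens' construction explicit, verifying via the gap criterion that it is totally disconnected and infinitesimally generated and pinning down why it yields only two generators, and then modifying the defining sequence — replacing the $\R$-datum by a $\BT$-datum and tuning the resolutions so that a single irrational survives as a $d$-generator — to bring the infinitesimal rank down to $1$.
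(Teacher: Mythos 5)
Your guiding intuition---pass from $\R$ to the circle so that a single small element can generate densely, and blame the rank-$2$ phenomenon on working in the line---is exactly right, and it parallels the paper's construction. But as written your text is a research plan, not a proof: every load-bearing step is deferred. You never construct the finer metric $d$ on $\BT$; you never prove the ``substantive step'' that every $d$-neighbourhood of $0$ contains an element with $d$-dense cyclic subgroup (you only say you would do it ``by a direct Diophantine construction \dots or by a Baire-category argument''); and you never verify your gap criterion for any concrete metric. You even identify yourself the obstruction that makes all this delicate---gaps in the norm range push the group towards non-archimedean behaviour, under which small elements generate only small subgroups---and nothing in the proposal resolves that tension. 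So there is a genuine gap, and it sits exactly at the step you flag as ``the hard part''.

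The paper avoids that hard part entirely, by two moves you do not make. First, it proves (Theorem \ref{thm: Z 1/2 is lt2g}) a statement about Stevens' group $G\supseteq\Z[1/2]$ \emph{in the line}: for the fixed dyadic generator $g_0=1$, the set of $h\in G$ with $\overline{\la 1,h\ra}=G$ is a dense $G_\delta$; this is pure elementary number theory (choosing $\beta$ so that $2^Nk_1$ and $2^Nk_2+\beta$ are coprime), with no circle analogue of equidistribution needed. Second, it observes that $\Z$ is discrete in $G$, so $\tilde G:=G/\Z$ is a Polish group in which the generator $1$ becomes trivial; hence single elements $h$ topologically generate $\tilde G$, and infinitesimal rank $1$ is inherited for free. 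This quotient trick---reuse the rank-$2$ theorem with one generator forced to be $1\in\Z$, then kill $\Z$---is the key idea your proposal is missing: you instead plan to redo the whole construction over $\BT$, which is precisely the work the paper never has to do. Finally, total disconnectedness of $\tilde G$ is obtained not from gaps in the norm but from the order structure of $\mathbb S^1$: $\tilde G$ is a proper dense subgroup of $\mathbb S^1$, so for each $g\in\mathbb S^1\setminus\tilde G$ the set $(g+A)\cap\tilde G$ with $A=p([0,1/2[)$ is clopen in $\tilde G$, and these sets separate points. Your gap criterion is a valid sufficient condition, but it is both unverified and unnecessary; to complete your own route you would have to produce the metric and prove the small-generator lemma, neither of which is done.
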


Let us now introduce the quasi non-archimedean property which is a strong negation of being infinitesimally finitely generated. 

\begin{defn}
A topological group is \textbf{quasi non-archimedean} if for every neighborhood of the identity $U$ in $G$ and every $n\in\N$, there exists a neighborhood of the identity $V$ such that for every $g_1,...,g_n\in V$, the group generated by $g_1,...,g_n$ is contained in $U$.
\end{defn}
\begin{rmq}
If we switch the quantifiers and ask for a $V$ which works for every $n\in\mathbb N$, it is not hard to see that the definition then becomes that of a non-archimedean topological group (i.e. admitting a basis of neighborhoods of the identity made of open subgroups). 
\end{rmq}

Our inspiration for the above definition comes from the following result of Kechris: every continuous homomorphism from an infinitesimally finitely generated group into a full group is trivial (see the paragraph just before Section (E) of Chapter 4 in \cite{MR2583950}). We upgrade this by showing that every full group is quasi non-archimedean, and that  any continuous homomorphism from an infinitesimally finitely generated group into a quasi non-archimedean group is trivial (see Proposition \ref{prop:QNAtrivial}). For locally compact groups, we obtain the following characterisation. 

\begin{thm}[see Theorem. \ref{thm: cara lc qna or ltfg}] Let $G$ be a separable locally compact group. Then $G$ is quasi non-archimedean if and only if $G$ is totally disconnected.
\end{thm}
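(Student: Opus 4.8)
The plan is to prove the two implications separately. The direction ``totally disconnected $\Rightarrow$ quasi non-archimedean'' will be essentially immediate from van Dantzig's theorem, while the converse will combine Theorem \ref{thm:infinitesimally finitely generated} with Proposition \ref{prop:QNAtrivial} applied to the identity component.

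First I would treat the easy direction. If $G$ is totally disconnected and locally compact, van Dantzig's theorem provides a basis of neighborhoods of the identity consisting of compact open subgroups. Given a neighborhood $U$ of the identity and any $n\in\N$, I would simply pick a compact open subgroup $V\subseteq U$; then for arbitrary $g_1,\dots,g_n\in V$ the subgroup $\langle g_1,\dots,g_n\rangle$ is contained in $V\subseteq U$ because $V$ is already a subgroup. Since the \emph{same} $V$ works for every $n$, this in fact shows that $G$ is non-archimedean, which is stronger than quasi non-archimedean (as observed in the Remark following the definition).

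For the converse, suppose $G$ is quasi non-archimedean; I must show that the identity component $G^0$ is trivial. The group $G^0$ is closed, connected, and locally compact; assuming it is also separable, Theorem \ref{thm:infinitesimally finitely generated} shows it is infinitesimally finitely generated. The inclusion $\iota\colon G^0\hookrightarrow G$ is then a continuous homomorphism from an infinitesimally finitely generated group into the quasi non-archimedean group $G$, so Proposition \ref{prop:QNAtrivial} forces $\iota$ to be trivial. As $\iota$ is injective, this gives $G^0=\{e\}$, i.e. $G$ is totally disconnected, as desired.

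The main obstacle I anticipate is justifying that $G^0$ is itself separable, since separability need not pass to arbitrary closed subgroups. If this point turns out to be delicate, I would bypass $G^0$ entirely and instead feed Proposition \ref{prop:QNAtrivial} a one-parameter subgroup: by the Gleason--Yamabe structure theory, a nontrivial connected locally compact group admits a compact normal subgroup $K$ with $G^0/K$ a nontrivial connected Lie group, and lifting a nonzero element of its Lie algebra through the surjection $L(G^0)\to L(G^0/K)$ produces a nontrivial continuous homomorphism $\R\to G$. Since $\R$ is infinitesimally finitely generated (indeed $t_I(\R)=2$), Proposition \ref{prop:QNAtrivial} again yields a contradiction with quasi non-archimedeanity. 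This alternative route has the advantage of using only the manifestly separable group $\R$ as the source of the homomorphism, at the cost of invoking the deeper structure theory for connected locally compact groups.
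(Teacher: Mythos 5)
Your proposal is correct and follows essentially the same route as the paper: van Dantzig's theorem gives the easy direction (indeed showing non-archimedean, hence quasi non-archimedean), and the converse applies Theorem \ref{thm:infinitesimally finitely generated} together with Proposition \ref{prop:QNAtrivial} to the identity component $G^0$, exactly as in the paper's proof of Theorem \ref{thm: cara lc qna or ltfg}. The separability point you worry about is genuine but is settled by the fact, cited in the paper's proof of Theorem \ref{thm:infinitesimally finitely generated2}, that closed subgroups of separable locally compact groups are separable, and your fallback via a one-parameter subgroup is precisely the Caprace--Cornulier argument recorded in the remark following that theorem (which has the added benefit of removing the separability hypothesis altogether).
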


Note that full groups are connected and at the same time quasi non-archimedean Polish groups. Moreover, we show that every quasi non-archimedean Polish groups embeds into a connected quasi non-archimedean Polish group (see Proposition \ref{prop:embed in QNA}.).

We also provide examples of totally disconnected Polish groups which are quasi non-archimedean, but not non-archimedean (see Corollary. \ref{Corollary:exists td qna not na}). On the other hand Theorem  \ref{thm: strong no to mazur problem} ensures us that there are totally disconnected Polish groups which are not quasi non-archimedean.\\

The paper is organised as follows. In Section \ref{sec:basics}, we prove some basic results on topological generators. In Section \ref{sec:connectedcompact}, we show that separable connected compact groups are infinitesimally $2$-generated. Section \ref{sec:locgeninlc} is devoted to the proof that every connected separable locally compact group is infinitesimally finitely generated. In Section \ref{sec:QNA} we introduce quasi non-archimedean groups and study their basic properties. We also give numerous examples, and show that a separable locally compact group is totally disconnected if and only if it is quasi non-archimedean. Finally, a Polish group into which no non-discrete locally compact group can embed is built in Section \ref{sec:remarks}, where we also ask three questions raised by this work.

\begin{ack} We warmly thank Pierre-Emmanuel Caprace for coming up with the terminology ‘‘infinitesimally generated'', as well as for useful conversations around this topic. We also thank Yves de Cornulier for his helpful remarks on a first version of the paper. 
\end{ack}

\section{Basic results about topological generators}\label{sec:basics}
We collect some results which will serve us in the preceding sections.

\begin{prop}\label{prop:K-profinite}
Let $G$ be a connected locally compact group. Suppose that $K$ is a profinite normal subgroup of $G$ such that $G/K$ is infinitesimally finitely generated. Then $t_I(G)=t_I(G/K)$. Moreover, if $\bar{g}_1,\ldots,\bar{g}_k$ topologically generate the group $G/K$, and $g_1,\ldots,g_k$ are arbitrary respective lifts in $G$, then $g_1,\ldots,g_k$ topologically generate $G$.
\end{prop}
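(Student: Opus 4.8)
The plan is to treat the ``moreover'' assertion as the core of the proposition and then deduce the rank equality from it together with the straightforward observation that topological generators push forward under the quotient map. Write $\pi\colon G\to G/K$ for the quotient homomorphism, and recall the two facts that will be used repeatedly: since $K$ is compact, $\pi$ is a \emph{closed} map (the set $CK$ is closed for closed $C$), and since $\pi$ is a quotient homomorphism it is an \emph{open} map.

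For the lifting statement, suppose $\bar g_1,\dots,\bar g_k$ topologically generate $G/K$ and fix arbitrary lifts $g_1,\dots,g_k\in G$. Set $H:=\overline{\langle g_1,\dots,g_k\rangle}$. First I would show $HK=G$. As $\pi$ is closed and $H$ is closed, $\pi(H)$ is a closed subgroup of $G/K$; it contains every $\bar g_i=\pi(g_i)$, hence contains the dense subgroup they generate, and is closed, so $\pi(H)=G/K$. This is precisely the statement $HK=G$.

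The crux is then to upgrade $HK=G$ to $H=G$, and this is where connectedness of $G$ and total disconnectedness of $K$ interact; I expect this to be the main obstacle. I would consider the homogeneous space $H\backslash G$ of right cosets. On the one hand it is the continuous image of the connected group $G$, hence connected. On the other hand, the map $K\to H\backslash G$, $k\mapsto Hk$, is surjective (because $G=HK$: any $g=hk$ satisfies $Hg=Hk$) and induces a continuous bijection $(H\cap K)\backslash K\to H\backslash G$, since $Hk_1=Hk_2$ is equivalent to $k_1k_2^{-1}\in H\cap K$. As $K$ is compact and $H\backslash G$ is Hausdorff ($H$ being closed), this continuous bijection from a compact space is a homeomorphism. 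Because $K$ is profinite and $H\cap K$ is a closed subgroup, the coset space $(H\cap K)\backslash K$ is totally disconnected. A nonempty space that is both connected and totally disconnected is a single point, so $H\backslash G$ is a point and $H=G$.

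Finally, I would derive the rank equality $t_I(G)=t_I(G/K)$. For $t_I(G)\leq t_I(G/K)$: given a neighbourhood $U$ of the identity in $G$, the image $\pi(U)$ is a neighbourhood of the identity in $G/K$ (openness of $\pi$), so infinitesimal finite generation of $G/K$ yields topological generators $\bar g_1,\dots,\bar g_n$ of $G/K$ lying in $\pi(U)$, with $n=t_I(G/K)$; choosing lifts $g_i\in U$ and invoking the lifting statement shows that $g_1,\dots,g_n\in U$ topologically generate $G$. For the reverse inequality, which also guarantees $t_I(G)<\infty$: given a neighbourhood $\bar V$ of the identity in $G/K$, pull it back to $V=\pi^{-1}(\bar V)$, pick topological generators $g_1,\dots,g_m\in V$ of $G$ with $m=t_I(G)$, and observe that their images $\pi(g_i)\in\bar V$ topologically generate $G/K$, since a continuous surjection sends a dense subgroup onto a dense subgroup. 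Hence $t_I(G/K)\leq t_I(G)$, and combining the two inequalities gives $t_I(G)=t_I(G/K)$.
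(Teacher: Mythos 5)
Your proof is correct, and it takes a genuinely different route from the paper's. The paper first uses connectedness of $G$ to show that the profinite normal subgroup $K$ is central, then writes $G=\varprojlim L_\alpha$ with $L_\alpha=G/K_\alpha$, where $K_\alpha$ runs over a net of open (finite-index) subgroups of $K$ with trivial intersection, so that each $L_\alpha$ is a finite cover of $G/K$ (Lemma \ref{lem:pro-central}); each finite cover is then handled by a Baire-category argument (Lemma \ref{lem:finite-cover}: the closure $F$ of the group generated by the projected lifts satisfies $\ker f\cdot F=L_\alpha$ with $\ker f$ finite, hence $F$ has non-empty interior, hence equals the connected group $L_\alpha$), and density in $G$ follows because the lifted subgroup projects densely to every $L_\alpha$. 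You replace all of this with a single homogeneous-space argument: after establishing $HK=G$ exactly as in the paper's finite-cover lemma (closedness of the projection), you identify $H\backslash G$ with $(H\cap K)\backslash K$ via the compactness-of-$K$ versus Hausdorffness-of-$H\backslash G$ trick, note that the latter space is totally disconnected because $K$ is profinite and $H\cap K$ is closed, and conclude because $H\backslash G$ is also connected, being a continuous image of $G$. This is more self-contained: it needs no inverse limits, no Baire category, and not even the centrality of $K$ (connectedness of $G$ enters only once), and you also spell out the deduction of $t_I(G)=t_I(G/K)$ from the lifting statement, which the paper leaves implicit. (One harmless slip in your wording: it is the inequality $t_I(G)\leq t_I(G/K)$, not the reverse one, that guarantees $t_I(G)<\infty$.) What the paper's longer route buys is the reusability of its intermediate results --- Lemma \ref{lem:finite-cover} is invoked again in Section \ref{sec:connectedcompact}, and the centrality of profinite normal subgroups of connected groups is of independent interest --- whereas your argument is the shorter path if one only wants Proposition \ref{prop:K-profinite} itself.
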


The proof of Proposition \ref{prop:K-profinite} will rely on the following two lemmas:

\begin{lem}\label{lem:finite-cover}
Let $H$ be a connected locally compact group and $f:H\onto L$ a finite covering map. Let $\{ l_1,\ldots l_k\}$ be a topological generating set for $L$ and pick $h_i\in f^{-1}(l_i),~i=1,\ldots,k$ arbitrarily. Then $h_1,\ldots,h_k$ topologically generate $H$.
\end{lem}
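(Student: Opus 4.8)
The plan is to let $N=\ker f$ and work with the closed subgroup $D=\overline{\langle h_1,\ldots,h_k\rangle}$ of $H$, whose equality with $H$ is exactly what we must prove. Since $f$ is a finite covering homomorphism and $L$ is connected, all fibres have the same (finite) cardinality, so $N=f\inv(e_L)$ is a finite normal subgroup and $f$ identifies $L$ with the quotient $H/N$. The strategy is in three moves: first show that $f(D)=L$; deduce from this that $D$ has finite index in $H$; and finally use connectedness of $H$ to upgrade ``finite index'' to ``equal to $H$''.

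The heart of the argument is the claim $f(D)=L$, and this is the step I expect to require the most care, because a priori the continuous image of a closed set need not be closed. The point to exploit is that $N$ is \emph{finite}. Since $D$ is closed and $N=\{n_1,\ldots,n_m\}$ is finite, the set $DN=\bigcup_{j}Dn_j$ is a finite union of translates of the closed set $D$, hence closed; moreover $DN$ is $N$-saturated, i.e.\ $DN\cdot N=DN$. As $f$ is an open surjective homomorphism it is the quotient map $H\onto H/N\cong L$, so the image of the closed $N$-saturated set $DN$ is closed in $L$. But $f(D)=f(DN)$ contains $f(\langle h_1,\ldots,h_k\rangle)=\langle l_1,\ldots,l_k\rangle$, which is dense in $L$ by hypothesis. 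A closed dense subset being the whole space, we conclude $f(D)=L$.

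It then remains to conclude. From $f(D)=L$ we get $H=f\inv(L)=DN$, so $H$ is covered by the finitely many right cosets $Dn_1,\ldots,Dn_m$; hence $[H:D]\leq\abs{N}<\infty$. A closed subgroup of finite index is open, since its complement is a finite union of (closed) cosets and is therefore closed. Finally, a connected topological group has no proper open subgroup, so the open subgroup $D$ must equal $H$. This shows $\overline{\langle h_1,\ldots,h_k\rangle}=H$, i.e.\ the lifts $h_1,\ldots,h_k$ topologically generate $H$, as desired. Every step apart from the closedness of $f(D)$ is routine; the essential input is the finiteness of the covering, used precisely to make $DN$ closed.
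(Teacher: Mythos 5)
Your proof is correct, and its skeleton is identical to the paper's: set $D=\overline{\langle h_1,\ldots,h_k\rangle}$, show that $f(D)$ is closed in $L$ and hence, containing the dense subgroup $\langle l_1,\ldots,l_k\rangle$, equal to $L$; deduce $H=D\cdot\ker f$; and finish with the fact that a connected group has no proper open subgroup. The difference lies in how you justify the two nontrivial steps, and in both cases your justification is more elementary and exploits the finiteness of $N=\ker f$ directly. For closedness of the image, the paper invokes the general fact that a finite covering map is a closed map, whereas you saturate: $DN$ is closed as a finite union of translates of a closed set, it is $N$-saturated, and the quotient topology on $H/N\cong L$ then makes $f(DN)=f(D)$ closed. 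For openness of $D$, the paper applies the Baire category theorem to $\ker f\cdot D=H$ (legitimate, since $H$ is locally compact) to conclude that $D$ has nonempty interior, whereas you note that $H=DN$ gives $[H:D]\leq\abs{N}<\infty$, and a closed subgroup of finite index is open because its complement is a finite union of closed cosets. What each approach buys: your argument never uses local compactness, so it actually proves the lemma for an arbitrary connected topological group admitting a finite covering homomorphism; the paper's Baire argument, on the other hand, is the one that would still go through if $\ker f$ were merely countable (with $H$ a Baire space) rather than finite.
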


\begin{proof}
Set $F=\overline{\langle h_1,\ldots,h_k\rangle}$. Note that $f$, being a finite cover, is a closed map, and hence $f(F)$ is closed in $L$. Since $l_1,\ldots,l_k\in f(F)$ we have $f(F)=L$. Thus $\ker{f}\cdot F=H$. Hence by the Baire category theorem $F$ has a non-empty interior. Since $H$ is connected, this implies that $F=H$. 
\end{proof}

\begin{lem}\label{lem:pro-central}
Let $G$ be a (connected) locally compact group admitting a pro-finite normal subgroup $K\lhd G$ such that $L=G/K$ is a Lie group. Then $G$ is an inverse limit $G=\varprojlim L_\ga$
of finite (central) extensions $L_\ga$ of $L$.
\end{lem}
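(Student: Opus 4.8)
The plan is to build the inverse system out of the family $\mathcal N$ of all subgroups $N\trianglelefteq G$ with $N\subseteq K$ and $[K:N]<\infty$; equivalently, the $G$-invariant open subgroups of $K$. Each such $N$ is compact (closed of finite index in the compact group $K$), and $\mathcal N$ is closed under finite intersections, hence downward directed by reverse inclusion. For $N\in\mathcal N$ the quotient $L_N:=G/N$ is an extension of $L=G/K$ by the finite group $K/N$, so it is locally isomorphic to $L$ and therefore a Lie group. When $G$ is connected, $L_N=G/N$ is connected, so the finite normal subgroup $K/N$ is centralised by $L_N$ (the conjugation map $L_N\to\Sym(K/N)$ into a finite discrete group has connected domain, hence trivial image), and $L_N$ is a \emph{finite central} extension of $L$. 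Thus every member of the projected system is of the required type, and it remains to prove that $\mathcal N$ is cofinal, meaning every neighbourhood of the identity contains some $N\in\mathcal N$, and that $G=\varprojlim_{N\in\mathcal N}L_N$.

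The heart of the argument is cofinality, and this is where I would invoke the solution of Hilbert's fifth problem. Since $L$ is a Lie group it has no small subgroups, so I fix a neighbourhood $V$ of the identity in $L$ containing no nontrivial subgroup and write $q:G\to L$ for the quotient map. Given an arbitrary neighbourhood $U$ of the identity in $G$, the Gleason--Yamabe theorem (applicable because $G$ is connected, so the open subgroup it produces is all of $G$) yields a compact normal subgroup $N\subseteq U\cap q^{-1}(V)$ with $G/N$ a Lie group. The key observation is that $q(N)$ is a subgroup of $L$ contained in $V$, hence trivial, so that $N\subseteq K$. Moreover $K/N=\ker(G/N\to L)$ is simultaneously a closed subgroup of the Lie group $G/N$ and a quotient of the profinite group $K$; being a manifold and totally disconnected it is discrete, and being compact it is finite. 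Therefore $N\in\mathcal N$ and $N\subseteq U$, which establishes cofinality (and in particular $\bigcap_{N\in\mathcal N}N=\{e\}$ by Hausdorffness).

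It then remains to check that the canonical continuous homomorphism $\Phi:G\to\varprojlim_{N\in\mathcal N}G/N$ is an isomorphism of topological groups, where $\pi_N:G\to G/N$ denotes the quotient map. Injectivity is immediate from $\bigcap_{N}N=\{e\}$. For surjectivity I would argue by compactness: a compatible family $(x_NN)_N$ consists of downward-nested cosets of the compact groups $N$, which have the finite intersection property, so their intersection is nonempty and yields a preimage. Finally, to see that $\Phi$ is open it suffices to produce, inside any neighbourhood $U$ of $e$, a set of the form $\pi_N^{-1}(W)$ with $W$ a neighbourhood of the identity in $G/N$. Choosing $N\in\mathcal N$ with $N\subseteq U$ and using the tube lemma to find an open $P_0\ni e$ with $NP_0N\subseteq U$, the set $P:=NP_0N$ is open, $N$-biinvariant (since $N$ is a subgroup), and satisfies $N\subseteq P\subseteq U$; as $N$ is normal, $P=\pi_N^{-1}(\pi_N(P))$ with $\pi_N(P)$ open, which is exactly what is needed. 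Hence $G=\varprojlim_N L_N$.

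The main obstacle is the middle step: turning the abstractly produced small normal subgroup with Lie quotient into one that sits \emph{inside} $K$. This is precisely what the no-small-subgroups property of the Lie group $L$ buys, and it is the link between the given profinite kernel $K$ and the structure theory flowing from Hilbert's fifth problem. The deep external input is the Gleason--Yamabe theorem; everything else is soft — compactness for surjectivity, the tube lemma for openness, and connectedness for centrality. For non-connected $G$ one loses the centrality conclusion and must replace Gleason--Yamabe by its version producing a small compact normal subgroup of an open subgroup, but the connected case is what is needed in the sequel.
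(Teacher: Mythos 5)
Your proof is correct, but it takes a genuinely different route from the paper's. The paper starts with a soft observation you never make: for each $k\in K$, the orbit map $g\mapsto gkg^{-1}$ has connected image (as $G$ is connected) lying inside the totally disconnected group $K$, hence is constant, so $K$ is \emph{central} in $G$. Once $K$ is central, every open subgroup $K_\ga\le K$ is automatically normal in $G$, and one just takes a directed net of open subgroups of $K$ with trivial intersection (which exists since $K$ is profinite); cofinality of the $K_\ga$ in the neighbourhood filter --- the point where you invoke the strong (arbitrarily small subgroups) form of Gleason--Yamabe together with the no-small-subgroups property of $L$ --- is then automatic from compactness of $K$: for an identity neighbourhood $U$, the compact sets $K_\ga\cap(K\setminus U)$ form a directed family with empty intersection, so one of them is already empty. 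Thus the paper gets both normality-in-$G$ and smallness for free, where you manufacture the subgroups $N$ via Hilbert's fifth problem machinery; your argument works, but it imports a deep theorem where a connectedness trick suffices. What your write-up buys in exchange: you verify in detail that the canonical map $G\to\varprojlim_N G/N$ is a topological isomorphism (surjectivity via the finite intersection property, openness via the tube lemma), a point the paper asserts without proof, and your centrality argument for the finite quotients (connected $L_N$ acting on the finite discrete group $K/N$) is the paper's orbit-map trick applied after quotienting rather than before. One imprecision to fix: you define $\mathcal N$ as all finite-index $G$-normal subgroups of $K$ and call this ``equivalent'' to the $G$-invariant \emph{open} subgroups of $K$; in a general profinite group a finite-index normal subgroup need not be closed, so you should define $\mathcal N$ to consist of the open (equivalently, closed finite-index) ones --- that is what your Gleason--Yamabe argument actually produces, and nothing else in your proof changes.
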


Although the Lemma holds without the assumption that $G$ is connected, since it significantly simplify the proof while being sufficient for our needs, we will prove it only under the connectedness assumption.

\begin{proof}
Given $k\in K$, the image of the orbit map $G\to K,~g\mapsto gkg^{-1}$ is at the same time connected, since $G$ is connected and the map is continuous, and totally disconnected as the image lies in $K$. It follows that it is constant and $k$ is central. Since $k$ is arbitrary we deduce that $K$ is central in $G$. In particular every subgroup of $K$ is normal in $G$. 

Let $K_\ga$ be a net of open subgroups in $K$ with trivial intersection. Then $K=\varprojlim K/K_\ga$ and $G=\varprojlim G/K_\ga$. Set $L_\ga=G/K_\ga$ and note that as $K_\ga$ is open in $K$, it is of finite index there. Thus the map $L_\ga\to L$ is a finite covering.
\end{proof}

\begin{proof}[Proof of Proposition \ref{prop:K-profinite}]
Let $G$ and $K$ be as in Proposition \ref{prop:K-profinite}. By Lemma \ref{lem:pro-central}, $G=\varprojlim L_\ga$ is an inverse limit of finite covers $L_\ga$ of $L=G/K$. Let $\bar{g}_1,\ldots,\bar{g}_k$ be topological generators of $L$, let $g_1,\ldots,g_k$ be arbitrary lifts in $G$ and denote by $g_i^\ga$ the projection of $g_i$ in $L_\ga$, for every $i,\ga$. By Lemma \ref{lem:finite-cover}, $\langle g_i^\ga:i=1,\ldots,k\rangle$ is dense in $L_\ga$. That is, the group $\langle g_i:i=1,\ldots,k\rangle$ projects densely to all $L_\ga$. This implies that it is dense in $G$. 
\end{proof}

\section{Connected compact separable groups are infinitesimally $2$-generated}\label{sec:connectedcompact}

Our aim in this section is to prove the following result. Recall that $t(G)$ is the minimal number of topological generators of $G$, while $t_I(G)$ is the minimal $n\in\N$ such that every neighborhood of the identity contains $n$ elements which topologically generate $G$.

\begin{thm}\label{thm:SU}
Let $G$ be a separable compact connected group. Then $t_I(G)=t(G)=2$ if $G$ is nonabelian and $t_I(G)=t(G)=1$ if $G$ is abelian. 
\end{thm}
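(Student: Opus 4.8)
The plan is to prove the two upper bounds $t_I(G)\le 2$ in the nonabelian case and $t_I(G)\le 1$ in the abelian case; the matching lower bounds are free, because a topologically monothetic group is abelian, so $t(G)\ge 2$ when $G$ is nonabelian, and in general $t_I(G)\ge t(G)$ (any neighbourhood basis is in particular a neighbourhood of the identity, so infinitesimal generators are generators). Throughout I would use the structure theory of compact connected groups, writing $G=\varprojlim_\alpha G_\alpha$ as an inverse limit of compact connected Lie groups with the kernels $N_\alpha$ of the projections forming a basis of neighbourhoods of the identity; the point is that a subgroup is dense in $G$ if and only if its image is dense in every $G_\alpha$. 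The whole problem thus reduces to producing, inside an arbitrary neighbourhood of the identity, two elements (one, in the abelian case) whose images generate each $G_\alpha$.

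Two ingredients feed the upper bounds. First, a shrinking mechanism on the semisimple directions: a product $\prod_j S_j$ of simply connected compact simple Lie groups is exponential, and so is any quotient of it, so if $a=\exp X$ is a coordinate of a generating tuple then $\exp(X/n)\to e$ while $\exp(X/n)^n=a$; replacing each generator by such an $n$-th root for $n$ large produces a generating tuple inside any prescribed neighbourhood of the identity, giving $t_I\le t$ for compact connected semisimple groups. Second, for the abelian directions I would use that in a separable compact connected abelian group $A$ the topological generators (nonempty by Halmos--Samelson) are in fact \emph{dense}: via Pontryagin duality a generator is an injective homomorphism $\widehat A\hookrightarrow\mathbb T$, and such a homomorphism can be chosen to take arbitrarily small values on any prescribed finite subset of $\widehat A$, so $A$ has generators arbitrarily close to the identity and $t_I(A)=1$.

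To assemble these I would invoke the Levi--Mal'cev decomposition $G=\overline{Z_0\cdot G'}$, where $Z_0$ is the identity component of the centre (compact connected abelian) and $G'=\overline{[G,G]}$ is semisimple, together with the central covering $Z_0\times G'\twoheadrightarrow G$ whose kernel $Z_0\cap G'$ is profinite. In $Z_0\times G'$ I take the two elements $(c,a)$ and $(e,b)$, where $c$ is a small generator of $Z_0$ and $(a,b)$ a small generating pair of $G'$. The closed subgroup they generate surjects densely onto $Z_0$ and onto $G'$, and since $Z_0$ is abelian while $G'$ is perfect (the two have no common nontrivial quotient) a Goursat-type argument forces this subgroup to be everything. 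Pushing the pair forward through the quotient maps, which send small generating tuples to small generating tuples and whose point-preimages of a neighbourhood are neighbourhoods, yields the required pair in $G$; conversely, should one prefer to build generators in a cover and lift them, Proposition \ref{prop:K-profinite} licenses exactly that through profinite central kernels.

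The main obstacle, and the heart of the theorem, is the construction of a small generating pair of the semisimple part $G'$, equivalently of the product $\prod_j S_j$, when there are uncountably many factors. Here Schreier--Ulam's Haar-measure genericity (Theorem \ref{thm:SUoriginal}) is unavailable: the set of ``good'' pairs is an intersection of one condition per factor, hence of up to $2^{\aleph_0}$ conditions, and such an intersection need be neither comeager nor of full measure. I would instead argue by hand. Grouping the factors by their (countably many) isomorphism types, within each type I choose one generating pair per factor, all near the identity and pairwise non-conjugate even modulo the finite group of outer automorphisms; by Goursat's lemma for products of compact simple groups this non-conjugacy makes the generated closed subgroup surject onto every pair of factors, hence onto the whole product. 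Such a family exists precisely because the space of conjugacy classes of generating pairs near the identity is positive-dimensional, so of cardinality $2^{\aleph_0}$, which accommodates the at most $2^{\aleph_0}$ factors of each type permitted by separability. This selection step, rather than the shrinking or the assembly, is where the real work and the use of separability lie.
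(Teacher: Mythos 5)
Your proposal is correct and follows essentially the same route as the paper's proof: the same decomposition of $G$ into its (closure of the) commutator subgroup $G'$ and a central abelian part, the same reduction of $G'$ modulo its profinite centre via Proposition \ref{prop:K-profinite} to a product of at most $2^{\aleph_0}$ simple compact Lie groups, the same Goursat-type lemma (Lemma \ref{lem:eso-rel}) on isomorphically related projections, and, crucially, the same transversal-to-conjugacy-orbits cardinality argument producing $2^{\aleph_0}$ pairwise non-isomorphically-related small generating pairs, which you rightly identify as the heart of the matter. Your local variations --- Pontryagin duality in place of the paper's power-of-a-generator argument of Lemma \ref{lem:tlabelian} (both reduce to the same pigeonhole fact, since the image of an injective character $\widehat A\to\mathbb T$ is torsion-free and hence all its integer multiples remain injective), the $n$-th-root shrinking trick via surjectivity of $\exp$ on the semisimple part (which the paper makes unnecessary by choosing the pairs inside small Zassenhaus neighbourhoods from the start), and the external Goursat assembly in $Z_0\times G'$ instead of the paper's internal central-translate Claim --- are all sound but do not change the architecture.
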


\subsection{The metrizable case}

The case where $G$ is metrizable was proved by Schreier and Ulam \cite{Schreier1935}. Recall that a compact group is metrizable if and only if it is first countable. In that case one can show that 
almost every pair of elements in $G$ (or a single element if $G$ is abelian) topologically generates $G$.
Let us give a short argument for Theorem \ref{thm:SUoriginal} in that case. 
First recall that by the Peter--Weyl theorem, $G$ is an inverse limit of compact Lie groups and, being first countable, the limit is over a countable net. Since a countable intersection of full measured sets is of full measure, it is enough to prove the analog statement for compact connected Lie groups.

Note also that, in complete generality, a subgroup $H\le G$ is dense if and only if
\begin{itemize}
\item $H\cap G'$ is dense in $G'$, where $G'$ is the commutator subgroup in $G$, and
\item $HG'$ is dense in $G$.
\end{itemize}

A connected compact Lie group $G$ is reductive, hence an almost direct product of its commutator $G'$ with its centre $Z$. Moreover $G'$ is connected and a finite cover of $G/Z$ which is a semisimple group of adjoint type. In view of Lemma \ref{lem:finite-cover}, we deduce:

\begin{lem}\label{lem:1.9}
Let $G$ be a connected compact Lie group and $H\le G$ a subgroup. Then $H$ is dense in $G$ if and only if both $HG'$ and $HZ(G)$ are dense in $G$.
\end{lem}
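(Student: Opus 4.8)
The plan is to treat the two implications separately; the forward one is immediate and the reverse one carries all the content.

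If $H$ is dense, then the larger sets $HG'$ and $HZ(G)$ are dense as well, so that direction needs no work. For the converse I would assume that $HG'$ and $HZ(G)$ are both dense and set $N=\overline H$. Since $G'$ and $Z(G)$ are closed (hence compact) subgroups of $G$ and are normal ($G'$ is characteristic, $Z(G)$ is central), the sets $NG'$ and $N\,Z(G)$ are compact subgroups, hence closed; containing the dense sets $HG'$ and $HZ(G)$ respectively, they must equal $G$. So it remains to show that the closed subgroup $N$ is all of $G$, and since $NG'=G$ it is enough to establish the single inclusion $G'\subseteq N$: indeed this forces $N=NG'=G$.

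To get $G'\subseteq N$ I would exploit the reductive structure recalled above. Write $\pi\colon G\to G/Z(G)$ for the quotient map, so that $G/Z(G)$ is semisimple of adjoint type and $\pi$ restricts to a finite covering $\pi'\colon G'\to G/Z(G)$. From $N\,Z(G)=G$ one gets $\pi(N)=G/Z(G)$. As a semisimple group $G/Z(G)$ is perfect, so passing commutators through the homomorphism $\pi$ gives $\pi([N,N])=[\pi(N),\pi(N)]=G/Z(G)$. At the same time every commutator of elements of $N\subseteq G$ lies in $[G,G]=G'$, so $M:=\overline{[N,N]}$ is a closed subgroup of $G'$ with $\pi'(M)=G/Z(G)$. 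Now $M$ is a closed subgroup of the connected group $G'$ surjecting onto $G/Z(G)$ through the finite cover $\pi'$, so exactly the Baire-category argument used in the proof of Lemma \ref{lem:finite-cover} (finiteness of $\ker\pi'$ gives $\ker\pi'\cdot M=G'$, so $M$ has non-empty interior, is open, and hence equals $G'$ by connectedness of $G'$) yields $M=G'$. Therefore $G'=\overline{[N,N]}\subseteq N$, which finishes the argument.

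The main obstacle is the reverse implication, and more precisely the fact that density of $HZ(G)$ only pins $H$ down modulo the centre and so gives, a priori, no grip on the semisimple part $G'$. The device I expect to resolve this is the perfectness of the adjoint group $G/Z(G)$: it guarantees that the commutators of $\overline H$ already surject onto $G/Z(G)$, after which the finite-cover argument of Lemma \ref{lem:finite-cover} promotes this surjectivity to the equality $\overline{[\,\overline H,\overline H\,]}=G'$.
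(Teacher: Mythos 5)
Your proof is correct and takes essentially the same approach as the paper: the paper's (very terse) argument likewise combines the reductive structure of $G$ --- $G'$ a connected finite cover of the adjoint semisimple group $G/Z(G)$ --- with the Baire-category argument of Lemma \ref{lem:finite-cover}. Your use of perfectness of $G/Z(G)$ and the commutator trick $[\overline{H},\overline{H}]\subseteq G'$ is exactly the bridge the paper leaves implicit, so there is nothing to add.
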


Therefore, for a pair $(x,y)\in G^2$ to generate a dense subgroup, it is sufficient if
\begin{itemize}
\item the projections of $x,y$ to $G/Z$ generate a dense subgroup in $G/Z$, and
\item the projection of $x$ to $G/G'$ generates a dense subgroup in $G/G'$.
\end{itemize}
It is easy to check that both conditions are satisfied with Haar probability $1$ (cf. \cite[Lem. 1.4 and Lem. 1.10]{zbMATH05508698}).


\subsection{Proof of Theorem \ref{thm:SU} in the general case}

Let us first deal with abelian groups. By the Halmos--Samelson theorem, whenever $G$ is connected compact separable abelian, one has $t(G)=1$. From their result, we deduce the following consequence. 

\begin{lem}\label{lem:tlabelian}Let $G$ be a compact connected separable abelian group. Then $t_I(G)=1$.
\end{lem}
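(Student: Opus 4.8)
The plan is to pass to the Pontryagin dual and reformulate everything in terms of homomorphisms into the circle. Write $D=\widehat G$. Since $G$ is connected, $D$ is torsion-free, and since $G$ is separable, restricting a character to a countable dense subset of $G$ gives an injection $D\hookrightarrow\mathbb T^{\mathbb N}$, so that $|D|\leq 2^{\aleph_0}$. Identifying $G$ with $\Hom(D,\mathbb T)$ equipped with the topology of pointwise convergence, an element $g\in G$ topologically generates $G$ if and only if the only $x\in D$ with $g(x)=1$ is $x=0$, i.e. if and only if $g\colon D\to\mathbb T$ is injective (its kernel is exactly the annihilator of $\overline{\langle g\rangle}$). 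A basic neighbourhood of the identity has the form $U=\{g : |g(x)-1|<\eps \text{ for all } x\in F\}$ for some finite $F\subseteq D$ and $\eps>0$. Thus it suffices, given such $F$ and $\eps$, to produce an injective homomorphism $\psi\colon D\to\mathbb T$ which is $\eps$-close to $1$ on $F$; this gives $t_I(G)\le 1$, and combined with the Halmos--Samelson equality $t(G)=1$ we get $t_I(G)=1$.

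I would build $\psi$ in two steps. First, let $D_0=\langle F\rangle$, a finitely generated torsion-free group, so $D_0\cong\mathbb Z^r$ with some basis $b_1,\dots,b_r$. Choosing $t_1,\dots,t_r\in\mathbb T$ small enough and with lifts $\tilde t_1,\dots,\tilde t_r$ such that $1,\tilde t_1,\dots,\tilde t_r$ are $\mathbb Q$-linearly independent (a dense condition near $0$), the homomorphism $\psi_0\colon D_0\to\mathbb T$ sending $b_i\mapsto t_i$ is injective; and since the elements of the finite set $F$ are integer combinations of the $b_i$ with bounded coefficients, $\psi_0$ is $\eps$-close to $1$ on $F$ once the $t_i$ are small enough.

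The second, and main, step is to extend $\psi_0$ to an injective homomorphism $\psi\colon D\to\mathbb T$, keeping it unchanged on $D_0$ so that smallness on $F$ is preserved. This is a relative version of the Halmos--Samelson embedding theorem, and it is here that separability (i.e. $|D|\le 2^{\aleph_0}$) is used in an essential way: a naive transfinite ``avoid the kernel'' construction breaks down precisely at cardinality $2^{\aleph_0}$, since at each step the set of forbidden values can already have the cardinality of $\mathbb T$. The remedy is to work with divisible hulls, where $\mathbb Q$-linear algebra is available. Pass to $\widehat D_0 = D_0\otimes\mathbb Q\subseteq \widehat D = D\otimes\mathbb Q$; since $D$ is torsion-free it embeds in $\widehat D$. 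Using that $\mathbb T$ is divisible (hence an injective $\mathbb Z$-module), extend $\psi_0$ to $\widehat\psi_0\colon \widehat D_0\to\mathbb T$, and note it is still injective because any nonzero $\mathbb Q$-subspace of $\widehat D_0$ meets $D_0$ nontrivially. Its image $S=\widehat\psi_0(\widehat D_0)$ is a finite-dimensional $\mathbb Q$-vector space, in particular a divisible subgroup of $\mathbb T$, so it splits off: $\mathbb T = S\oplus C$. Writing $\widehat D=\widehat D_0\oplus E$ as $\mathbb Q$-vector spaces, it remains to embed $E$ into $\mathbb T$ with image meeting $S$ trivially; for this one uses the abstract isomorphism $\mathbb T\cong(\mathbb Q/\mathbb Z)\oplus\mathbb Q^{(2^{\aleph_0})}$, which shows that $C\cong\mathbb T/S$ again contains a copy of $\mathbb Q^{(2^{\aleph_0})}$ and therefore receives any $\mathbb Q$-vector space $E$ of dimension $\le 2^{\aleph_0}$. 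Setting $\psi$ to equal $\widehat\psi_0$ on $\widehat D_0$ and this embedding on $E$, and then restricting to $D$, yields the desired injective extension.

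I expect the delicate point to be exactly this extension step: one must simultaneously guarantee injectivity on all of $D$ (global density of $\langle g\rangle$, not merely on a metrizable quotient) and control the finitely many characters in $F$ (smallness), all under the sharp cardinality bound $|D|\le 2^{\aleph_0}$ coming from separability. Once the algebra of divisible groups is set up correctly, the remaining verifications are routine bookkeeping.
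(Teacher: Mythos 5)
Your proof is correct, but it takes a genuinely different route from the paper's. The paper's proof is three lines long and uses Halmos--Samelson as a black box: pick a topological generator $g$; given an identity neighbourhood $U$, compactness yields some $n\neq 0$ with $g^n\in U$; then $\overline{\la g^n\ra}$ has finite index in $\overline{\la g\ra}=G$, so it is open and, $G$ being connected, equals $G$. The key trick there is that in a connected group, nonzero powers of a topological generator are again topological generators. You never use that trick; instead you work entirely on the dual side and reprove a relative, quantitative form of Halmos--Samelson: an injective character $D\to\mathbb{T}$ exists which is $\eps$-small on any prescribed finite $F\subseteq D$. This is longer and needs the structure theory of divisible groups, but it is essentially self-contained (you quote Halmos--Samelson only for the trivial lower bound $t_I(G)\geq 1$), and it makes visible exactly where separability, via $|D|\leq 2^{\aleph_0}$, enters. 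One small repair in your extension step: $\ker\widehat{\psi}_0$ is a priori only a subgroup of $\widehat{D}_0$, not a $\mathbb{Q}$-subspace (for $x\in\ker\widehat{\psi}_0$ and $q=a/b$ one only gets $b\cdot\widehat{\psi}_0(qx)=0$, i.e.\ the value $\widehat{\psi}_0(qx)$ is torsion, not necessarily zero), so your appeal to ``nonzero $\mathbb{Q}$-subspaces of $\widehat{D}_0$ meet $D_0$'' does not literally apply; but the same argument works verbatim for subgroups: any nonzero $x\in\ker\widehat{\psi}_0$ has a nonzero integer multiple $nx\in D_0$, and then $nx\in\ker\psi_0$ contradicts the injectivity of $\psi_0$.
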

\begin{proof}
By the Halmos--Samelson theorem, we may and do pick $g\in G$ such that $\la g\ra$ is dense in $G$. Let $U$ be a neighborhood of the identity in $G$, and fix $n\in\Z\setminus\{0\}$ such that $g^n\in U$. Then since $\la g^n\ra$ has finite index in $\la g\ra$, its closure $\overline{\la g^n\ra}$ has finite index in $\overline{\la g\ra}=G$. Since $G$ is connected, we must have $\overline{\la g^n\ra}=G$.
\end{proof}

Now suppose that $G$ is any connected compact group.
By the Peter--Weil theorem $G=\varprojlim G_\ga$ is an inverse limit of compact connected Lie groups $G_\ga$.

Observe that a surjective map $f:G_1\onto G_2$ between groups always satisfies 
$$
 f(G_1')=G_2'~\text{and}~ f(Z(G_1))\subset Z(G_2),
$$ 
while for reductive Lie groups we also have $f(Z(G_1))=Z(G_2)$.
Since every connected compact Lie group is reductive, hence the product of its centre and its commutator, we deduce that the same hold for general compact connected groups, i.e. 
$$
 Z(G)=\varprojlim Z(G_\ga),~ G'=\varprojlim G_\ga'~\text{and}~G=Z(G)G'.
$$
 
Moreover, since the $G_\ga'$ are semisimple, and in particular perfect, $G'$ is also perfect, i.e. $G'=G''$. It follows that if $\gC\le G'$ is dense, then $\gC'$ is dense in $G'$. Hence we have:

\begin{clm}
Suppose that $a,b\in G'$ topologically generate $G'$, and $h\in Z(G)$ is an element whose image mod $G'$ topologically generates $G/G'$. Then $ah$ and $b$ topologically generate $G$.
\end{clm}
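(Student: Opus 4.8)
The plan is to establish density of $\la ah,b\ra$ by verifying the two conditions of the general criterion recalled above: a subgroup $H\le G$ is dense in $G$ if and only if $HG'$ is dense in $G$ and $H\cap G'$ is dense in $G'$. I would apply this to $\Gamma:=\la ah,b\ra$. The first condition is immediate upon projecting to the abelian quotient $G/G'$: since $a,b\in G'$, the image of $b$ is trivial and the image of $ah$ coincides with the image of $h$; by hypothesis the latter topologically generates $G/G'$, so $\Gamma G'$ is dense in $G$.

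The substance of the argument is the second condition, and this is exactly where centrality of $h$ is used. The key observation is the bookkeeping identity $w(ah,b)=w(a,b)\,h^{e}$, valid for any word $w$ in two letters, where $e$ denotes the exponent sum of the first letter in $w$; indeed, since $h\in Z(G)$ is central, all the factors $h^{\pm 1}$ arising from the substitution $x\mapsto ah$ may be collected and pulled out, their total power being precisely $e$. Now every element of a commutator subgroup is represented by a word whose exponent sum in each generator vanishes, so the correction term $h^{e}$ disappears on all commutators. This yields the equality of abstract commutator subgroups $\Gamma'=\la ah,b\ra'=\la a,b\ra'$ inside $G$.

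To finish, I would invoke the two facts recalled just before the statement: $\la a,b\ra$ is dense in $G'$ by hypothesis, and since $G'$ is perfect the commutator subgroup of a dense subgroup of $G'$ is again dense in $G'$. Hence $\la a,b\ra'$ is dense in $G'$, and by the previous paragraph so is $\Gamma'$. Since $\Gamma'\subseteq\Gamma\cap G'$, the intersection $\Gamma\cap G'$ is dense in $G'$, which is the second condition. Both conditions of the criterion now hold, so $\Gamma=\la ah,b\ra$ is dense in $G$, as claimed.

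I expect the only nontrivial point — and hence the main obstacle — to be the identity $w(ah,b)=w(a,b)\,h^{e}$ together with the resulting equality of commutator subgroups; once centrality of $h$ is exploited to make the central correction cancel on commutators, everything else is a direct application of the density criterion and the perfectness of $G'$.
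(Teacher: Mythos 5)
Your proof is correct and is essentially the argument the paper intends: the claim is stated there as an immediate consequence of the two facts recalled just before it, namely the density criterion ($H$ is dense iff $HG'$ is dense in $G$ and $H\cap G'$ is dense in $G'$) and the perfectness of $G'$, and your identity $w(ah,b)=w(a,b)h^{e}$ is exactly the (implicit) use of centrality of $h$ that yields $\la ah,b\ra'=\la a,b\ra'$, hence density of $\la ah,b\ra\cap G'$ in $G'$. Nothing is missing.
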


%

Suppose from now on that $G$ is separable. Then every quotient of $G$ is also separable. Now $G/G'$ is connected and abelian, so  we have  $t_I(G/G')=1$ by Lemma \ref{lem:tlabelian}.

Since $Z(G)$ surjects onto $G/G'$, every identity neighbourhood in $G$ contains a central element $h$ whose image in $G/G'$ generate a dense subgroup. Thus
we are left to show that $t_I(G')=2$. The centre of $G'$ is totally disconnected since it can be written as $Z(G')=\varprojlim Z(G_\ga')$, and every $G_\ga'$ has finite center. In view of Proposition \ref{prop:K-profinite} we may thus suppose that $G'$ is center-free.
In order to simplify notations, let us suppose below that $G$ itself is center-free. Note that:

\begin{lem}
A center-free connected compact group is a direct product of simple Lie groups.\qed
\end{lem}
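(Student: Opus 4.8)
The plan is to combine the Peter--Weyl presentation $G=\varprojlim G_\alpha$ into compact connected Lie groups with the classification of the latter, organised in three movements: descend to adjoint (center-free) Lie quotients, recall that each such quotient is a finite product of simple factors, and then track these factors through the bonding maps down to the inverse limit.

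First I would arrange that the approximating Lie groups are themselves center-free. By the observation recorded above, a surjection of reductive groups carries centre onto centre, so the bonding maps $G_\alpha\onto G_\beta$ descend to the adjoint forms $\bar G_\alpha:=G_\alpha/Z(G_\alpha)$, yielding an inverse system with limit $\varprojlim\bar G_\alpha$. The canonical map $G\to\varprojlim\bar G_\alpha$ has kernel exactly $Z(G)$: if $\pi_\alpha(g)$ is central in $G_\alpha$ for every $\alpha$, then $ghg^{-1}h^{-1}\in\ker\pi_\alpha$ for all $h\in G$ and all $\alpha$, hence $ghg^{-1}h^{-1}=e$ since $\bigcap_\alpha\ker\pi_\alpha=\{e\}$. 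As $Z(G)$ is trivial by hypothesis, the map is injective; it is onto because $G$ is compact and surjects onto each $\bar G_\alpha$. Thus we may assume every $G_\alpha$ is center-free.

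Second, I invoke the structure theory: a center-free compact connected Lie group is a finite direct product $G_\alpha=\prod_{i\in I_\alpha}S_{\alpha,i}$ of center-free simple compact Lie groups, and these factors are precisely its minimal nontrivial closed connected normal subgroups. The key local computation is then the behaviour of a surjection $f\colon\prod_iS_i\onto\prod_jT_j$ between two such products. Each $f(S_i)$ is a closed connected normal subgroup of the target, hence a subproduct; since $S_i$ is simple it is either trivial or a single factor $T_j$, and a nontrivial surjection between center-free simple compact Lie groups is an isomorphism (its kernel is a proper ideal, so trivial, and adjointness rules out a nontrivial cover). Distinct factors $S_i$ commute, so no two of them can have the same image $T_j$ (that would force $T_j$ to centralise itself, i.e.\ to be abelian); together with surjectivity this shows that $f$ is, up to these isomorphisms, the coordinate projection onto a sub-collection of factors in bijection with $\{T_j\}_j$.

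Finally I would assemble the limit. The index sets $I_\alpha$ together with the partial bijections extracted above form an inverse system of sets; let $I$ be its inverse limit, the set of coherent threads of factors that survive all the way down. Each thread $i\in I$ carries a compatible family of simple groups linked by the bonding isomorphisms, defining a single simple compact Lie group $S_i$, and reading off coordinates thread by thread yields a topological isomorphism $\varprojlim\bar G_\alpha\cong\prod_{i\in I}S_i$ (the product topology agreeing with the inverse-limit topology by compactness); combined with the first step this gives $G\cong\prod_{i\in I}S_i$. The main obstacle I anticipate is the bookkeeping here and in the previous step: pinning down that every surjection of these products is genuinely a partial coordinate projection with isomorphisms, and checking that the inverse limit of the system of index-sets-with-partial-bijections recovers exactly the simple factors of $G$, with no spurious factors appearing in the limit. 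Everything else is either the cited Lie structure theory or soft compactness arguments.
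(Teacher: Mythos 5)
The paper offers no proof of this lemma --- it is stated with a \qed{} as the standard structure theorem for centre-free connected compact groups (cf.\ Hofmann--Morris) --- so there is no argument to compare yours against line by line; I can only assess your proposal on its own terms. Your first two movements are correct: passing to the adjoint quotients $\bar G_\alpha=G_\alpha/Z(G_\alpha)$ of the Peter--Weyl approximants (using that surjections of compact connected Lie groups carry centre onto centre) does realise $G\cong\varprojlim \bar G_\alpha$ when $Z(G)=1$, and your analysis of a surjection $f\colon\prod_i S_i\to\prod_j T_j$ between finite products of adjoint simple groups (images of factors are normal, hence subproducts, hence single factors or trivial; distinct factors commute, so distinct nontrivial images; $f$ is the product of its restrictions) is sound.

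The gap is in the final assembly. You define $I$ as ``the inverse limit'' of the index sets $I_\alpha$ under the partial bijections, ``the set of coherent threads of factors that survive all the way down.'' Taken literally, this set is \emph{empty} in precisely the cases the lemma is about. Test it on $G=\prod_{n\in\N}S_n$ with $S_n$ adjoint simple, approximated by the finite subproducts $G_F=\prod_{n\in F}S_n$: the factor indexed by $n$ has trivial image in $G_F$ whenever $n\notin F$, so no factor survives to every level, there are no everywhere-defined threads, and your $I$ would be empty --- whereas $G$ must be recovered as an infinite product. The index sets in fact form a \emph{direct} system, not an inverse one: each partial map $I_\beta\to I_\alpha$ ($\beta\geq\alpha$) is a bijection from its domain onto all of $I_\alpha$ (onto because the bonding map is onto, injective by your commuting-factors argument), so every index at level $\alpha$ has a unique lift to every level $\beta\geq\alpha$, giving injections $I_\alpha\hookrightarrow I_\beta$; the correct indexing set is the direct limit (increasing union) $I$ of the $I_\alpha$ along these injections, i.e.\ germs of factors that are born at some stage and persist \emph{upward}, with no requirement of surviving downward. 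With this correction the rest of your argument does go through: for each $i\in I$ the coordinate maps $G\to \bar G_\alpha\to S_i$ (defined for $\alpha$ large enough) are compatible, the resulting continuous homomorphism $G\to\prod_{i\in I}S_i$ is injective because $\bigcap_\alpha\ker(G\to\bar G_\alpha)$ is trivial, surjective because any point of the product assembles into a coherent thread of the $\bar G_\alpha$, and a continuous bijection between compact Hausdorff groups is a topological isomorphism.
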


Thus, $G$ is of the form $G=\prod_{\ga\in I} S_\ga$ with $S_\ga$ being connected adjoint simple Lie group. 

\begin{lem}
The group $G=\prod_{\ga\in I} S_\ga$ is separable (if and) only if $\text{Card}(I)\le 2^{\aleph_0}$.
\end{lem}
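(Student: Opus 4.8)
The statement is precisely the Hewitt--Marczewski--Pondiczery theorem together with its converse, so the plan is to prove the two implications separately. The only features of the factors that will matter are that each $S_\ga$ is a nontrivial, Hausdorff, second-countable space; in particular each $S_\ga$ has two disjoint nonempty open subsets and a countable dense subset.

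\emph{The implication ``separable $\Rightarrow\mathrm{Card}(I)\le 2^{\aleph_0}$''} (the only one actually needed in the sequel, whence the parenthetical ``(if and)'') I would prove by a counting argument. Assume $G$ admits a countable dense subset $D$. For every $\ga\in I$ fix two distinct points of $S_\ga$ and, using that $S_\ga$ is Hausdorff, disjoint nonempty open sets $U_\ga$ and $V_\ga$ separating them. Define a map $\Phi\colon I\to\mathcal P(D)$ by
\[
\Phi(\ga)=\{d\in D: d_\ga\in U_\ga\}.
\]
I then claim $\Phi$ is injective: given $\ga\neq\gb$, the basic open cylinder $\{x\in G: x_\ga\in U_\ga,\ x_\gb\in V_\gb\}$ is nonempty and open, hence meets $D$ in some point $d$; then $d\in\Phi(\ga)$, while $d_\gb\in V_\gb$ forces $d_\gb\notin U_\gb$, so $d\notin\Phi(\gb)$. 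Thus $\Phi(\ga)\neq\Phi(\gb)$, and $\mathrm{Card}(I)\le\mathrm{Card}(\mathcal P(D))=2^{\aleph_0}$.

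\emph{The converse} is the construction at the heart of the theorem. Assuming $\mathrm{Card}(I)\le 2^{\aleph_0}$, I would first reindex: inject $I$ into the Cantor space $\{0,1\}^{\N}$ and assign a one-point group to every coordinate outside the image of $I$, which changes the product only up to homeomorphism. So we may assume the index set is $\{0,1\}^{\N}$, each factor $X_k$ being separable with a fixed enumerated dense sequence $(x_k^n)_{n\in\N}$. For each level $m\in\N$ and each function $\ph\colon\{0,1\}^m\to\N$ I define a ``step point'' $y_{m,\ph}\in\prod_k X_k$ whose coordinate at $k$ equals $x_k^{\ph(s)}$, where $s$ is the length-$m$ prefix of $k$; there are only countably many such points. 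To see they are dense, take a nonempty basic open set constraining finitely many \emph{distinct} coordinates $k_1,\dots,k_r$ to lie in prescribed nonempty open sets $O_1,\dots,O_r$, choose $m$ so large that the length-$m$ prefixes $s_1,\dots,s_r$ of $k_1,\dots,k_r$ are pairwise distinct, and pick $\ph$ with $\ph(s_j)=n_j$ where $x_{k_j}^{n_j}\in O_j$ (possible by density of $(x_{k_j}^n)_n$). Then $y_{m,\ph}$ lies in the given basic open set.

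The genuinely substantive step is this last construction, and specifically the device of separating finitely many coordinates through their prefixes in $\{0,1\}^{\N}$, which is exactly where the bound $2^{\aleph_0}$ enters. The other implication is, by contrast, a short counting argument using only that the factors are Hausdorff and nontrivial; since the running hypothesis here is that $G$ is separable, it is this implication that the proof actually requires.
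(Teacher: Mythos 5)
Your proof is correct, but both halves take a genuinely different route from the paper's. For the direction that is actually needed here (separable $\Rightarrow\mathrm{Card}(I)\le 2^{\aleph_0}$), the paper does not use your separating-open-sets counting map $\Phi\colon I\to\mathcal P(D)$; it argues by contradiction: since there are only countably many isomorphism types of compact adjoint simple Lie groups, an index set of cardinality $>2^{\aleph_0}$ would force a factor of the form $S^\kappa$ with $\kappa>2^{\aleph_0}$, and then a pigeonhole argument (the map $\kappa\to S^D$, $\ga\mapsto(d_\ga)_{d\in D}$, cannot be injective) shows that any countable $D\subset S^\kappa$ projects into the diagonal of some pair of factors, hence is not dense. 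Your argument is more elementary and strictly more general --- it needs only that each factor is a nontrivial Hausdorff space --- whereas the paper's exploits the Lie-theoretic setting and, not incidentally, foreshadows the diagonal obstruction (projections being isomorphically related, cf.\ Lemma \ref{lem:eso-rel}) that drives the rest of the section. For the converse, the paper deliberately proves nothing at this point: it notes that this direction will follow once the section's main construction exhibits a dense $2$-generated subgroup whenever $\mathrm{Card}(I)\le 2^{\aleph_0}$, which is the whole point of the theorem being proved. Your self-contained Hewitt--Marczewski--Pondiczery construction (reindexing by the Cantor space and using prefix-separation to build a countable dense set of step points) is correct and establishes the purely topological fact, but in context it is logically redundant, since separability is about to be obtained as a consequence of the much stronger $2$-generation statement. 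In short: both treatments are sound; yours makes the lemma independent of the surrounding machinery, while the paper's keeps the lemma minimal and lets the main theorem carry the ``if'' direction.
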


Let us explain the `only if' side. The other direction will follow once we show that $\text{Card}(I)\le 2^{\aleph_0}$ implies that $G$ has a two generated dense subgroup.
Suppose by way of contradiction that $\text{Card}(I)> 2^{\aleph_0}$. Since there are only countably many isomorphism types of (compact adjoint) simple Lie groups, we deduce that there is some compact simple Lie group $S$ and a cardinality $\kappa>2^{\aleph_0}$ such that $G$ admits a factor isomorphic to $S^\kappa$. However by the cardinals version of the pigeon hole principal, if $D\subset S^\kappa$ is a countable subset, there must be two factors $S_1,S_2$ of $S^\kappa$ such that the projection of $D$ to $S_1\times S_2$ lies in the diagonal. In particular, $D$ cannot be dense, confirming the desired contradiction. \qed

Thus, we may suppose below that $\text{Card}(I)\le 2^{\aleph_0}$. 

\begin{defn}
Let $S_1,S_2$ be two groups, $F$ a set and $f_i:F\to S_i,~i=1,2$ two maps. We shall say that the maps $f_1$ and $f_2$ are isomorphically related if there is an isomorphism $\phi:S_1\to S_2$ such that the following diagram is commutative
\[
\begin{array}[c]{ccccc}
F&\xrightarrow{f_1}&S_1\\
&\searrow\scriptstyle{f_2}&\downarrow\scriptstyle{\phi}&\\
&&S_2
\end{array}
\]
\end{defn}

\begin{lem}\label{lem:eso-rel}
Let $\prod_{i=1}^k S_i~(k\ge 2)$ be a product of simple groups and let $R< \prod_{i=1}^k S_i$ be a proper subgroup that projects onto every $S_i$. Then there are two factors $S_i,S_j,~i\ne j$ such that the restrictions of the quotient maps $R\to S_i$ and $R\to S_j$ are isomorphically related.
\end{lem}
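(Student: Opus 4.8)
The plan is to induct on the number $k$ of factors, using the structure theory of subdirect products of simple groups (essentially Goursat's lemma). Throughout, write $\pi_i\colon R\to S_i$ for the restriction to $R$ of the $i$-th coordinate projection, so that each $\pi_i$ is onto by hypothesis, and recall that in our situation the $S_i$ are \emph{non-abelian} simple groups (they are adjoint simple Lie groups); I will use non-abelianness in an essential way. Consider the homomorphism $p\colon\p_{i=1}^k S_i\to\p_{i=1}^{k-1}S_i$ forgetting the last coordinate and set $R'=p(R)$, a subdirect product of $\p_{i=1}^{k-1}S_i$. If $R'$ is a \emph{proper} subgroup, the inductive hypothesis yields two indices $i\ne j$ in $\{1,\dots,k-1\}$ whose projections from $R'$ are isomorphically related; since $\pi_i$ and $\pi_j$ factor through $p$, the very same isomorphism shows $\pi_i$ and $\pi_j$ are isomorphically related on $R$, and we are done (this also covers the base case $k=2$, where the projection to a single coordinate is automatically onto).

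It remains to treat the case $R'=\p_{i=1}^{k-1}S_i$. Here the kernel of $p|_R$ consists of the elements $(e,\dots,e,s)\in R$, and $N:=\{s\in S_k:(e,\dots,e,s)\in R\}$ is a normal subgroup of $S_k$ (using that $\pi_k$ is onto), hence $N=\{e\}$ or $N=S_k$ by simplicity. The case $N=S_k$ would give $\{e\}^{k-1}\times S_k\subseteq R$ and, together with $R'=\p_{i=1}^{k-1}S_i$, force $R=\p_{i=1}^k S_i$, contradicting properness. So $N=\{e\}$, meaning $p|_R\colon R\to\p_{i=1}^{k-1}S_i$ is bijective; thus $R$ is the graph of a surjective homomorphism $\psi=\pi_k\circ(p|_R)\inv\colon\p_{i=1}^{k-1}S_i\onto S_k$.

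The crux, and the step I expect to be the main obstacle, is to show that such a $\psi$ is an isomorphism precomposed with a single coordinate projection. For each $i<k$ the subgroup $\psi(S_i)$ is normal in $S_k$ (as $\psi$ is onto), hence trivial or all of $S_k$; and since the $S_i$ generate the product, the $\psi(S_i)$ generate $S_k$, so at least one is onto. Crucially, two distinct factors cannot both surject: if $\psi(S_i)=\psi(S_j)=S_k$ with $i\ne j$, then $S_i$ and $S_j$ commute, hence so do their images, forcing $S_k$ to be abelian --- a contradiction. Therefore there is a unique $i_0<k$ with $\psi(S_{i_0})=S_k$, while $\psi(S_i)=\{e\}$ for $i\ne i_0$, so $\psi$ factors as $\bar\psi\circ\mathrm{pr}_{i_0}$ with $\bar\psi\colon S_{i_0}\onto S_k$; its kernel is a proper normal subgroup of $S_{i_0}$, hence trivial, so $\bar\psi$ is an isomorphism. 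Composing back, $\pi_k=\bar\psi\circ\pi_{i_0}$, which is exactly the statement that $\pi_{i_0}$ and $\pi_k$ are isomorphically related, completing the induction.
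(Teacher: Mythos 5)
Your proof is correct, and it follows a genuinely different route from the paper's. The paper chooses a subset $J\subseteq\{1,\dots,k\}$ minimal with the property that the projection $R_J$ of $R$ to $\prod_{i\in J}S_i$ is proper, and shows $|J|=2$ by a commutator argument: if $|J|\geq 3$, minimality gives, for every $g\in S_1$ and every $i\in J\setminus\{1\}$, an element of $R_J$ with first coordinate $g$ and $i$-th coordinate trivial, and multiplying commutators of such elements, together with perfectness of $S_1$, forces $S_1\leq R_J$ (and likewise for the other factors), contradicting properness; for $|J|=2$ it then runs the same Goursat-type analysis as your two-factor case ($R_J\cap S_i$ is normal in $S_i$, hence trivial, so both projections $R_J\to S_i$ are isomorphisms). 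You instead induct on $k$: either the image of $R$ in $\prod_{i<k}S_i$ is proper (inductive step), or $R$ is the graph of a surjection $\psi\colon\prod_{i<k}S_i\onto S_k$, and your key observation is that such a $\psi$ is an isomorphism precomposed with a single coordinate projection, since the normal subgroups $\psi(S_i)$ are trivial or everything and two of them cannot both equal $S_k$ without making $S_k$ abelian. (Note that for $k=2$ your first branch never occurs, so the base case is entirely handled by the graph analysis, which does not invoke the inductive hypothesis; the induction is therefore well-founded.) Both arguments use non-abelianness of the $S_i$ essentially --- the paper via perfectness, you via commuting images --- and you are right that this hypothesis is needed: for abelian simple groups the lemma fails, e.g. $R=\{(x,y,x+y)\}<(\Z/p\Z)^3$ is a proper subdirect product no two of whose coordinate projections are isomorphically related, so the paper's statement implicitly relies on the $S_i$ being adjoint simple Lie groups. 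Your induction buys modularity, cleanly separating the reduction to a graph from the factorization of surjections onto simple groups, and avoids the commutator bookkeeping; the paper's minimality trick is non-inductive and pinpoints the two relevant factors directly as a minimal obstruction.
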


\begin{proof}
For a subset $J\subset \{1,\ldots,k\}$ let us denote $S_J=\prod_{i\in J}S_i$ and $R_J=\text{Proj}_{S_J}(R)$.
Let $J\subset \{1,\ldots,k\}$ be a minimal subset such that $R_J$ is a proper subgroup of $S_J$. By our assumption $J$ exists and satisfies $|J|>1$. We claim that $|J|=2$.  
To see this, we may reorder the indices so that $J=\{1,\ldots,j\}$ and suppose by way of contradiction that $j\ge 3$. This, together with the minimality of $J$, implies that for every $g\in S_1$ and every $1<i\le j$ there is an element in $R_J$ whose first coordinate is $g$ and whose $i$'th coordinate is $1$. However, multiplying commutators of elements as above, forcing that at each coordinate $1<i\le j$ at least one of the elements we use is trivial, and using the fact that $S_1$ is perfect, we deduce that $S_1\le R_J$. In the same way we get that $S_i\le R_J$ for all $i\in J$ contradicting the assumption that $R_J$ is proper in $S_J$.  
Thus $J=\{1,2\}$. Moreover, as $S_i$ is simple and normal, and $R_J$ projects onto $S_i$, while $R_J$ is proper in $S_J$, it follows that $R_J\cap S_i$ is trivial, for $i=1,2$.
Therefore, the restriction of the projection from $R_J$ to $S_i$ is an isomorphism, for $i=1,2$, hence the maps $R\to S_1$ and $R\to S_2$ are isomorphically related.
\end{proof}

Assembling together isomorphic factors of $G$, we may decompose $G$ as a direct product $G=\prod_{n\in\BN} G_n$ where different $n$'s correspond to non-isomorphic simple Lie factors $S_n$, and each $G_n$ is of the form $G_n=S_n^{I_n}$ with $|I_n|\le 2^{\aleph_0}$. For $x\in G$ we shall denote by $x_n^\ga,~\ga\in I_n$ its corresponding coordinates.

Two elements $x,y\in G$ generate a dense subgroup if and only if for any finite set of indices $F=\{ (n_i,\ga_i)\}$ the projections of $x$ and $y$ generate a dense subgroup in $\prod_F S_{n_i}^{\ga_i}$. In view of Lemma \ref{lem:eso-rel} we have:

\begin{cor}
Two elements $x,y\in G$ generate a dense subgroup in $G$ if and only if
\begin{itemize}
\item $\langle x_n^\ga,y_n^\ga\rangle$ is dense in $S_n^\ga$ for every $n\in \BN$ and every $\ga\in I_n$, and
\item For every $n,m\in\BN$ and $\ga\in I_n,\gb\in I_m$ the (projection) maps $\{x,y\}\to S_n^\ga$, $\{x,y\}\to S_m^\gb$ are not isomorphically related.
\end{itemize}
\end{cor}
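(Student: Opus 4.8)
The plan is to prove both implications directly, relying on the reduction established in the preceding paragraph: $\langle x,y\rangle$ is dense in $G$ if and only if, for every finite set of indices $F=\{(n_i,\ga_i)\}$, the projections of $x$ and $y$ generate a dense subgroup of the finite product $P_F=\prod_F S_{n_i}^{\ga_i}$, which is itself simply a finite product of simple Lie groups. This lets me apply Lemma \ref{lem:eso-rel} factor by factor.

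For the forward implication, I would assume $\langle x,y\rangle$ is dense and derive both bullet points. The first is immediate: projecting a dense subgroup onto a single coordinate $S_n^\ga$ yields a dense subgroup, so $\langle x_n^\ga,y_n^\ga\rangle$ is dense in $S_n^\ga$. For the second, I argue by contradiction: if for two distinct indices the maps $\{x,y\}\to S_n^\ga$ and $\{x,y\}\to S_m^\gb$ were isomorphically related via some isomorphism $\phi$, then both $(x_n^\ga,x_m^\gb)$ and $(y_n^\ga,y_m^\gb)$ would lie in the graph $\Gamma_\phi=\{(s,\phi(s)):s\in S_n^\ga\}$, a proper closed subgroup of $S_n^\ga\times S_m^\gb$. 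Hence the projection of $\langle x,y\rangle$ to this two-factor product would be contained in $\Gamma_\phi$ and could not be dense, contradicting density in $G$.

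For the reverse implication, I would fix a finite set $F$ with $|F|\ge 2$ (the case $|F|=1$ is handled directly by the first bullet) and let $R_F$ be the closure in $P_F$ of the group generated by the projections of $x$ and $y$. By the first bullet, $R_F$ projects onto every simple factor $S_{n_i}^{\ga_i}$. If $R_F$ were a proper subgroup, Lemma \ref{lem:eso-rel} would produce two distinct factors for which the quotient maps $R_F\to S_{n_i}^{\ga_i}$ and $R_F\to S_{n_j}^{\ga_j}$ are isomorphically related by some isomorphism $\phi$. The decisive point is that, since $\text{Proj}_F(x)$ and $\text{Proj}_F(y)$ belong to $R_F$, the same $\phi$ witnesses that the maps $\{x,y\}\to S_{n_i}^{\ga_i}$ and $\{x,y\}\to S_{n_j}^{\ga_j}$ are isomorphically related, contradicting the second bullet. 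Thus $R_F=P_F$ for every finite $F$, and density of $\langle x,y\rangle$ in $G$ follows from the reduction.

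The only non-formal step—and the place I expect the real (if modest) obstacle—is this last matching. Lemma \ref{lem:eso-rel} relates the two \emph{quotient maps out of the closed group $R_F$}, whereas the second bullet speaks of maps defined only on the two-element set $\{x,y\}$. The bridge is the inclusion $\text{Proj}_F(x),\text{Proj}_F(y)\in R_F$, which lets the witnessing isomorphism descend to $\{x,y\}$ verbatim; I would be careful to invoke the definition of ‘‘isomorphically related'' at exactly this level of generality, and to note that the argument covers pairs of distinct indices $(n,\ga)\ne(m,\gb)$ even when $n=m$.
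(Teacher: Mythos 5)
Your proof is correct and follows essentially the route the paper intends: the paper states this corollary with no further argument, deriving it immediately from the finite-subproduct density criterion together with Lemma \ref{lem:eso-rel}, and your write-up simply makes that derivation explicit (passing to the closure $R_F$, applying the lemma, and descending the witnessing isomorphism to the two-element set $\{x,y\}$). The only point worth a footnote is that in the forward direction the graph $\Gamma_\phi$ is indeed closed because abstract isomorphisms between compact simple Lie groups are automatically continuous (van der Waerden), a gloss the paper itself also leaves implicit.
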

 
Thus, in order to prove the theorem, we should explain how to pick $(x_n^\ga,y_n^\ga)$ arbitrarily close to the identity in $S_n^\ga$, for every $n\in\BN,\ga\in I_n$, which generate a dense subgroup of $S_n$, such that for every pair $(n,\ga)\ne(m,\gb)$ there is no isomorphism $S_n\to S_m$ taking $(x_n^\ga,y_n^\ga)$ to $(x_m^\gb,y_m^\gb)$. Since for $n\ne m$ there is no isomorphism between $S_n$ and $S_m$ we should only consider the case $n=m$.

Let $S=S_n$. Recall that there is an identity neighbourhood (a Zassenhaus neighbourhood) $\gO\subset S$ on which the logarithm is well defined, and for $x,y\in \gO$ the group $\langle x,y\rangle$ is dense in $S$ if $\log(x)$ and $\log (y)$ generate the Lie algebra $\text{Lie}(S)$, (see \cite{zbMATH03069311}). Thus we may pick some open set $U_1\times U_2\subset \gO^2$ such that for all $(x,y)\in U_1\times U_2$ the group $\langle x,y\rangle$ is dense in $S$ (cf. \cite{zbMATH01760707}). Furthermore, we may pick $U_1,U_2$ arbitrarily close to the identity. Since $\Out(S)$ is finite and $S$ is compact, we may also suppose, by taking $U_1$ and $U_2$ sufficiently small, that for $(x,y)\in U_1\times U_2$, if $f\in \Aut(S)$ satisfies $(f(x),f(y))\in U_1\times U_2$, then $f$ is inner. Finally, since the orbit of each $(x,y)\in U_1\times U_2$ under the action of $S$ on $S\times S$ by conjugation, is $\dim(S)$ dimensional, while $\dim (U_1\times U_2)=2\dim(S)$, we can pick a section $\mathcal{S}$  transversal to the orbits foliation, inside $U_1\times U_2$. Clearly the cardinality of that section is $2^{\aleph_0}$, hence we may imbed $I_n$ inside this section. This imbedding yields a choice of $(x^\ga,y^\ga)\in \mathcal{S}\subset U_1\times U_2$ for every $\ga\in I_n$, and we have that each pair $(x^\ga,y^\ga)$ generates a dense subgroup in $S$, and no two pairs are isomorphically related. This completes the proof of Theorem \ref{thm:SU}.
\qed

\section{Local generators and locally compact connected groups}\label{sec:locgeninlc}

Recall that a topological group $G$ is \textbf{infinitesimally finitely generated} if $t_I(G)<+\infty$, that is, if there exists $n\in\N$ such that every neighborhood of the identity contains $n$ topological generators for $G$. 

Similarly, we say that $G$ is \textbf{topologically finitely generated} if $t(G)<\infty$, that is, if $G$ admits a dense finitely generated subgroup.

Last but not least, a topological group is \textbf{infinitesimally generated} if it is generated by every neighborhood of the identity.

Note that if a group is infinitesimally finitely generated, then it is also topologically finitely generated and infinitesimally generated. Our main goal in this section is to prove the following result.

 \begin{thm}\label{thm:infinitesimally finitely generated2}
Let $G$ be a separable connected locally compact group. Then $G$ is infinitesimally finitely generated. 
\end{thm}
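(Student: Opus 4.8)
The plan is to reduce the statement, via the solution to Hilbert's fifth problem, to the two situations already understood: compact connected groups (Theorem \ref{thm:SU}) and profinite extensions of Lie groups (Proposition \ref{prop:K-profinite}). These are glued together by the following elementary observation about extensions, which I would record first. Suppose $G$ is a topological group and $N\lhd G$ a closed normal subgroup such that both $N$ and $G/N$ are infinitesimally finitely generated; then so is $G$, with $t_I(G)\le t_I(N)+t_I(G/N)$. To prove this, fix a neighborhood $U$ of the identity. Since $U\cap N$ is a neighborhood of the identity in $N$, one can pick $k_1,\dots,k_a\in U\cap N$ ($a=t_I(N)$) topologically generating $N$; and since the quotient map $\pi\colon G\to G/N$ is open, $\pi(U)$ is a neighborhood of the identity, so one can pick $\bar g_1,\dots,\bar g_b\in\pi(U)$ ($b=t_I(G/N)$) topologically generating $G/N$ and lift them to $g_1,\dots,g_b\in U$. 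Let $H=\overline{\langle k_1,\dots,k_a,g_1,\dots,g_b\rangle}$. Then $N\subseteq H$, so $H$ is $\pi$-saturated and $\pi(H)$ is closed; as $\pi(H)$ contains the dense subgroup generated by the $\bar g_i$, it equals $G/N$, whence $H=G$. All generators lie in $U$, giving the bound.

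With this in hand, I would argue as follows. By the Gleason--Yamabe theorem there is a compact normal subgroup $K\lhd G$ such that $L:=G/K$ is a connected Lie group; being a continuous open image of the separable group $G$, it is separable, hence second countable. As $L$ has a finite-dimensional Lie algebra, which is generated as a Lie algebra by finitely many elements, Lemma \ref{prop:liealgtopogen} shows $L$ is infinitesimally finitely generated. Now let $K_0$ be the identity component of $K$: it is compact connected, characteristic in $K$, hence normal in $G$. The quotient $K/K_0$ is compact totally disconnected, i.e. profinite, and it is normal in the connected locally compact group $G/K_0$ with $(G/K_0)/(K/K_0)\cong L$. Proposition \ref{prop:K-profinite} then gives that $G/K_0$ is infinitesimally finitely generated, with $t_I(G/K_0)=t_I(L)$.

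It remains to pass from $G/K_0$ to $G$, that is, to handle the compact connected normal subgroup $K_0$. Here $K_0$, being a compact subgroup of the separable group $G$, is itself separable: its weight is at most that of $G$, which is at most $2^{\aleph_0}$, and for compact groups density and weight are linked by $d=\log w$, so weight at most $2^{\aleph_0}$ forces separability. Theorem \ref{thm:SU} therefore yields $t_I(K_0)\le 2$. Applying the extension observation to the closed normal subgroup $K_0\lhd G$ gives $t_I(G)\le t_I(K_0)+t_I(G/K_0)\le 2+t_I(L)<\infty$, which proves the theorem.

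I expect the main obstacle to be organizational rather than computational, namely making the structure theory cooperate with \emph{infinitesimal} (not merely topological) generation. The three delicate points are: Gleason--Yamabe only supplies a compact kernel, not a profinite one, which is exactly what forces the split into the connected part $K_0$ and the profinite part $K/K_0$; one must check that the compact connected subgroup $K_0$ is separable before invoking Theorem \ref{thm:SU}; and throughout, the lifts of generators must remain inside the prescribed neighborhood of the identity, which is precisely what the openness of the group quotient maps guarantees.
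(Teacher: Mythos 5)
Your proof is correct and follows essentially the same route as the paper's: apply Gleason--Yamabe, split the compact kernel $K$ into its identity component $K^\circ$ (handled by Theorem \ref{thm:SU}, after checking separability) and the profinite quotient $K/K^\circ$ (handled by Proposition \ref{prop:K-profinite} applied to $G/K^\circ$ over the Lie quotient), then glue via the subadditivity $t_I(G)\le t_I(N)+t_I(G/N)$ of Lemma \ref{lemG/N-infinitesimally finitely generated}. The only differences are cosmetic: you write out a proof of that extension lemma, which the paper states without proof, and you justify separability of $K^\circ$ through the weight--density relation $d=\log w$ for compact groups, whereas the paper simply cites the fact that closed subgroups of separable locally compact groups are separable.
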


As already noted, the separability condition is necessary for a group to be topologically finitely generated. The connectedness assumption is also necessary (for local generation) since otherwise $G/G^\circ$, the group of connected components, is non-trivial and by the van Dantzig's theorem admits a base of identity neighbourhood consisting of open compact subgroups. In particular, if $O\le G/G^\circ$ is a proper open subgroup, its pre-image in $G$ cannot contain a topological generating set.
\medskip

Let us start by dealing with the nicest connected locally compact groups: Lie groups. For these, one can use the Lie algebra to produce topological generators. The following lemma is well known, but we include a proof for the reader's convenience.

\begin{lem}[Folklore]\label{prop:liealgtopogen}
Let $G$ be a connected Lie group, and let $\mathfrak g$ be its Lie algebra. Suppose that $\mathfrak g$ is generated as a Lie algebra by $X_1,...,X_n$. Then every neighborhood of the identity in $G$ contains $2n$ elements $g_1,...,g_{2n}$ which generate a dense subgroup in $G$. 
In particular, $G$ is infinitesimally finitely generated.
\end{lem}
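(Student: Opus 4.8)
The plan is to produce the $2n$ generators by pairing up one-parameter subgroups, exploiting the elementary fact (essentially Kronecker's theorem, recorded as $t_I(\R)=2$) that every neighbourhood of $0$ in $\R$ contains two reals $s,t$ with $s/t$ irrational, such a pair generating a dense subgroup of $\R$. Fix a neighbourhood $U$ of the identity in $G$. For each $i\in\{1,\dots,n\}$ I would use continuity of the exponential map at $0$ to choose $s_i,t_i\in\R$ so small that $\exp(s_iX_i),\exp(t_iX_i)\in U$, while simultaneously arranging $s_i/t_i$ to be irrational. Setting $g_{2i-1}=\exp(s_iX_i)$ and $g_{2i}=\exp(t_iX_i)$ then gives $2n$ elements of $U$, and it remains to show they generate a dense subgroup.

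Let $H=\overline{\la g_1,\dots,g_{2n}\ra}$. The key observation is that $H$ contains the whole one-parameter subgroup $\exp(\R X_i)$ for every $i$. Indeed, since $\exp(s_iX_i)$ and $\exp(t_iX_i)$ both lie on the abelian one-parameter subgroup $r\mapsto\exp(rX_i)$, the subgroup they generate is exactly $\{\exp(rX_i):r\in\Z s_i+\Z t_i\}$; as $s_i/t_i$ is irrational, $\Z s_i+\Z t_i$ is dense in $\R$, so continuity of $\exp$ forces $\exp(\R X_i)\subseteq H$. In particular each $X_i$ lies in the Lie algebra $\mathfrak h$ of $H$.

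To conclude I would invoke Cartan's closed-subgroup theorem: $H$, being closed in $G$, is a Lie subgroup, so $\mathfrak h$ is a genuine Lie subalgebra of $\mathfrak g$. Since $\mathfrak h$ contains $X_1,\dots,X_n$ and is closed under the bracket, it contains the Lie subalgebra these elements generate, which by hypothesis is all of $\mathfrak g$; hence $\mathfrak h=\mathfrak g$. A closed subgroup whose Lie algebra is the full $\mathfrak g$ contains a neighbourhood of the identity via $\exp$, hence is open, and an open subgroup of the connected group $G$ must equal $G$. Thus $H=G$, i.e. $g_1,\dots,g_{2n}$ topologically generate $G$, and in particular $t_I(G)\le 2n<+\infty$.

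The argument is entirely standard, and no step is genuinely delicate. The only point requiring mild care is the simultaneous choice of $s_i,t_i$ small enough to land in $U$ yet with irrational ratio, which is immediate since one may fix any small $s_i$ and take $t_i=s_i\theta$ for an irrational $\theta$ close to $1$. The one structural input is the passage from ``$H$ contains each $\exp(\R X_i)$'' to ``$\mathfrak h=\mathfrak g$'': this is precisely where the Lie-algebra generation hypothesis and Cartan's theorem enter, and I would regard it as the conceptual heart of the proof rather than a real obstacle.
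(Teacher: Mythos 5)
Your proof is correct and follows essentially the same route as the paper's: choose, inside a small neighbourhood, two elements of each line $\R X_i$ whose exponentials topologically generate $\exp(\R X_i)$ (the fact that $t_I(\R)=2$), then apply Cartan's closed-subgroup theorem to see that the closure $H$ of the generated group has Lie algebra containing all the $X_i$, hence equal to $\mathfrak g$, and conclude $H=G$ by connectedness. The only cosmetic difference is that you make the density of $\la s_i,t_i\ra$ in $\R$ explicit via an irrational ratio, whereas the paper simply invokes the existence of such a pair.
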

\begin{proof}
Let $V$ be a neighborhood of the identity in $G$.
Let $U$ be a small enough neighborhood of 0 in $\mathfrak g$ such that $\exp: U\to G$ is a homeomorphism onto its image, and $\exp(U)\subseteq V$. Fix $2n$ elements $Y_1,...,Y_{2n}$ of $U$ such that for every $i\in\{1,...,n\}$, $\{Y_{2i},Y_{2i+1}\}$ generates a dense subgroup of $\R X_i$. 

For all $i\in\{1,...,2n\}$, let $g_i=\exp(Y_i)$, we will show that these elements topologically generate $G$. Let $H$ be the closed subgroup generated by the set $\{g_i\}_{i=1}^{2n}$. Note that for all $i\in\{1,...,n\}$, the group $H$ contains $\exp(\R X_i)$ since the restriction of the exponential map to $\R X_i$ is a continuous group homomorphism and $\R X_i$ is topologically generated by $Y_{2i}$ and $Y_{2i+1}$ which are mapped to $g_{2i}\in H$ and $g_{2i+1}\in H$.

Furthermore, $H$ is a Lie group by Cartan's theorem, and since $H$ contains every $\exp(\R X_i)$ the Lie algebra $\mathfrak h$ of $H$ contains every $X_i$, so $\mathfrak h=\mathfrak g$.  Because $G$ is connected, we get that $G=H$.
\end{proof}
We deduce from the above lemma that for any connected Lie group, $t_I(G)\leq 2\dim(G)$.
Better bounds on $t_I(G)$ can be deduced from the analysis in \cite{zbMATH01903603,MR2255501,zbMATH05508698}.

Let now $G$ be a general connected locally compact group. Recall the celebrated Gleason-Yamabe theorem (cf.  \cite[Page 137]{zbMATH03353499}):

\begin{thm}(Gleason--Yamabe)\label{Gleeson--Yamabe}
Let $G$ be a connected locally compact group. Then there is a compact normal subgroup $K\lhd G$ such that $G/K$ is a Lie group.
\end{thm}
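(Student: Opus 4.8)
The plan is to treat this as the connected-group case of the solution to Hilbert's fifth problem and to follow the classical route of Gleason, Montgomery--Zippin and Yamabe (in the streamlined form found in Tao's monograph). The first, purely formal, reduction uses connectedness to eliminate the ``open subgroup'' that features in the general Gleason--Yamabe statement: an open subgroup of a topological group is also closed, so by connectedness the only open subgroup of $G$ is $G$ itself. Hence it suffices to show that every identity neighbourhood $U$ contains a compact normal subgroup $K$ with $G/K$ a Lie group.

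The analytic heart is the \emph{no small subgroups} criterion: a locally compact group is a Lie group if and only if it is NSS, i.e.\ some identity neighbourhood contains no nontrivial subgroup. The direction ``Lie $\Rightarrow$ NSS'' is immediate from an exponential chart. The converse ``NSS $\Rightarrow$ Lie'' carries essentially all the difficulty, so the argument splits into two tasks: (i) establish the NSS criterion, and (ii) reduce an arbitrary connected $G$ to the NSS case by quotienting out small compact normal subgroups.

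For task (i) the key device is a \textbf{Gleason metric}: a left-invariant metric $d$ near the identity with an escape property (if $g,g^2,\ldots,g^n$ all stay in a fixed small ball then $d(g,1)\lesssim \tfrac1n$) and a commutator estimate $d([g,h],1)\lesssim d(g,1)\,d(h,1)$. One builds it by averaging: using Haar measure, produce a compactly supported bump function $\phi$, convolve to smooth it, and define $d(g,1)$ from the norm of the difference between $\phi$ and its left translate by $g$ in a suitable function space; Gleason's lemmas then refine $\phi$ so that the escape and commutator inequalities hold, the NSS hypothesis being exactly what keeps the metric nondegenerate. From such a metric one extracts one-parameter subgroups as escaping limits $t\mapsto \lim_n g_n^{\lfloor tn\rfloor}$, shows via the commutator estimate that they span a finite-dimensional Lie algebra closed under Baker--Campbell--Hausdorff, and uses this to manufacture a manifold chart exhibiting $G$ as a Lie group. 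This construction, and the verification of its two estimates from the bare NSS hypothesis, is the genuinely hard step and the main obstacle; everything else is either formal or classical.

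For task (ii) one invokes an approximation result: every locally compact group has arbitrarily small compact normal subgroups modulo which the quotient is NSS. For compact groups this is exactly the Peter--Weyl theorem, which writes a compact group as an inverse limit of compact Lie groups; the locally compact case follows by applying Peter--Weyl to a large compact subgroup together with a local argument and the NSS criterion. Feeding the resulting small compact normal subgroup into task (i) and using connectedness to control the quotient then yields, inside the given $U$, a compact normal $K$ with $G/K$ a Lie group, as required.
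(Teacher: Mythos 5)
The paper does not prove this statement at all: it is quoted as the classical Gleason--Yamabe theorem, with a pointer to the literature (Montgomery--Zippin, p.~137), and Section 4 of the paper then uses it as a black box in the proof of Theorem \ref{thm:infinitesimally finitely generated2}. So there is no proof of the paper's own to compare yours against. Your outline is a faithful rendering of the standard Gleason--Montgomery--Zippin--Yamabe strategy as streamlined in Tao's monograph: connectedness does eliminate the open-subgroup caveat of the general statement (an open subgroup is closed, hence equals $G$); the NSS criterion does characterize Lie groups among locally compact groups; Gleason metrics with the escape and commutator estimates are indeed built by convolving bump functions against Haar measure; one-parameter subgroups extracted as escaping limits do span a finite-dimensional Lie algebra giving a chart; and Peter--Weyl is the engine producing arbitrarily small compact normal subgroups with NSS (indeed Lie) quotient.

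Be aware, though, that what you have written is a roadmap rather than a proof. Your tasks (i) and (ii) --- that NSS locally compact groups are Lie, and that every locally compact group admits arbitrarily small compact normal subgroups modulo which the quotient is NSS --- jointly \emph{are} the Gleason--Yamabe theorem; you describe how their proofs go but carry out none of the estimates. In particular, Gleason's lemmas (verifying the escape property and the commutator bound for the convolution-built metric, with NSS ensuring nondegeneracy) and the reduction of the locally compact case to the compact case are each substantial arguments that your sketch names without supplying. For the purpose this theorem serves in the paper, the correct move is exactly the one the authors make: state it and cite it. If your text were meant as a self-contained proof, the gap is essentially the entire content of the theorem.
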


Since $G$ is connected, $G/K$ is a connected Lie group and hence $t_I(G/K)$ is finite. However, $K$ may not be connected.

\begin{exam}
$(1)$ (The solenoid)
For every $n$, let $T_n$ be a copy of the circle group $\{z\in\BC:|z|=1\}$, and whenever $m$ divides $n$ let $f_{n,m}:T_n\to T_m$ be the $n/m$ sheeted cover $f_{n,m}(z)=z^{n/m}$. Let $T=\varprojlim T_n$ be the inverse limit group. Then $T$ is connected, abelian and locally compact, but admits no connected co-Lie subgroups.

$(2)$ Similarly, as $\SL_2(\BR)$ is homotopic to a circle, we can define, for every $n\in\BN$, $G_n$ as the $n$ sheeted cover of $\SL_2(\BR)$.
Then whenever $m$ divides $n$ there is a canonical covering morphisms $\psi_{n,m}:G_n\to G_m$, and we may let $G$ be the inverse limit $G=\varprojlim G_n$. Then $G$ is a connected locally compact group which admits no nontrivial connected compact normal subgroups. 
\end{exam}

In order to prove Theorem \ref{thm:infinitesimally finitely generated2}, we need one last elementary lemma.

\begin{lem}\label{lemG/N-infinitesimally finitely generated}
Let $G$ be a topological group and $N$ a normal subgroup. Then $t_I(G)\le t_I(N)+t_I(G/N)$. In particular if $G/N$ and $N$ are infinitesimally finitely generated then so is $G$. \qed
\end{lem}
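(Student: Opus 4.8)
The plan is to bound the infinitesimal rank of $G$ by producing, inside any prescribed identity neighborhood of $G$, a generating set obtained by combining local generators of $N$ with lifts of local generators of $G/N$. Write $m=t_I(N)$ and $k=t_I(G/N)$; I want to show $t_I(G)\le m+k$, which immediately gives the ``in particular'' clause since the right-hand side is then finite.

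First I would fix an arbitrary neighborhood $V$ of the identity in $G$. Since $N$ carries the subspace topology, $V\cap N$ is a neighborhood of the identity in $N$, so by definition of $t_I(N)$ there exist $n_1,\ldots,n_m\in V\cap N$ generating a dense subgroup of $N$. Next I would push forward: let $\pi:G\to G/N$ be the quotient map, which is open and continuous, so $\pi(V)$ is a neighborhood of the identity in $G/N$. By definition of $t_I(G/N)$, I can choose $k$ elements of $\pi(V)$ that topologically generate $G/N$; writing each as $\pi(g_j)$ with $g_j\in V$, I obtain $g_1,\ldots,g_k\in V$ whose images topologically generate $G/N$.

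The remaining step is to verify that $H:=\overline{\langle n_1,\ldots,n_m,g_1,\ldots,g_k\rangle}$ equals $G$. The key observation is that $\pi(H)$ is a closed subgroup of $G/N$ (the continuous image $\pi(\langle\ldots\rangle)$ contains the generators $\pi(g_j)$, hence is dense, and $\pi$ of a closed subgroup containing $N$ is closed) containing a dense subgroup, so $\pi(H)=G/N$, i.e. $HN=G$. On the other hand $H$ contains $\overline{\langle n_1,\ldots,n_m\rangle}=N$, so $H\supseteq N$. Combining $H\supseteq N$ with $HN=G$ yields $H=G$. Since $V$ was arbitrary, every identity neighborhood contains $m+k$ topological generators of $G$, so $t_I(G)\le t_I(N)+t_I(G/N)$.

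I expect the only delicate point to be the topological bookkeeping around closures, namely ensuring that $\pi(H)=G/N$ genuinely follows and that $H\supseteq N$ rather than merely a dense subgroup of $N$. Both are handled cleanly: $H$ is closed by construction and contains the dense subgroup $\langle n_i\rangle$ of $N$, forcing $N\subseteq H$; and density of $\pi(\langle g_j\rangle)$ in $G/N$ together with the fact that $\pi(N)$ is trivial gives $\pi(H)$ dense, hence equal to $G/N$ since $\pi(H)$ is closed. The argument is otherwise entirely formal, which is consistent with the statement being flagged as elementary and proved by \qed in the lemma's declaration.
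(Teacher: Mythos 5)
Your proof is correct, and since the paper gives no proof of this lemma at all (it is stated as elementary, with the proof left to the reader), your argument is precisely the standard one being left implicit: pick local generators of $N$ inside $V\cap N$, lift local generators of $G/N$ from the open neighborhood $\pi(V)$, and use that a closed subgroup $H$ containing $N$ has closed image $\pi(H)$ (because $\pi^{-1}(\pi(H))=HN=H$). One cosmetic remark: the equality $\overline{\langle n_1,\ldots,n_m\rangle}=N$ holds only when $N$ is closed in $G$, but in general one still has the inclusion $N\subseteq\overline{\langle n_1,\ldots,n_m\rangle}$ (density in the subspace topology gives $N\subseteq\overline{\langle n_i\rangle}^{G}$), which is all your argument actually uses, so the proof is valid for an arbitrary normal subgroup.
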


\begin{proof}[Proof of Theorem \ref{thm:infinitesimally finitely generated2}]
Let $G$ be a connected separable locally compact group. Let $K\lhd G$ be a compact normal subgroup such that $G/K$ is a Lie group (see Theorem \ref{Gleeson--Yamabe}), and let $K^\circ$ be its identity connected component. Then $K^\circ$ is characteristic in $K$ and therefore normal in $G$. Being a closed subgroup of a locally compact separable group, $K^\circ$ is separable \cite{MR0447455}. Let $H=G/K^\circ$ and $K^t=K/K^\circ$. By the isomorphism theorem, $G/K\cong H/K^t$. Note that $K^t$ is a pro-finite group, hence by Proposition \ref{prop:K-profinite} $H$ is infinitesimally finitely generated. By Theorem \ref{thm:SU}, $K^\circ$ is infinitesimally finitely generated, hence, by Lemma \ref{lemG/N-infinitesimally finitely generated}, $G$ is infinitesimally finitely generated.
\end{proof}

\section{Quasi non-archimedean groups}\label{sec:QNA}

A topological group is \textbf{non-archimedean} if it has a basis of neighborhoods of the identity made of open subgroups. Equivalently,  every neighborhood of the identity $V$ contains a smaller neighborhood of the identity $U$ such that the group generated by $U$ is contained in $V$, which is the same as requiring that for every $n\in\N$ and $g_1,...,g_n\in U$, the group generated by $g_1,...,g_n$ is contained in $V$. The definition that follows is obtained by switching two quantifiers in the above condition.

\begin{defn}Say a topological group $G$ is \textbf{quasi non-archimedean} if for all $n\in\N$ and all neighborhood of the identity $V\subseteq G$, there exists a neighborhood of the identity $U\subseteq V$ such that for all $g_1,...,g_n\in U$, the group generated by $g_1,...,g_n$ is contained in $V$.
\end{defn}

Clearly every non-archimedean group is also quasi non-archimedean. Let us give right away the motivating example for this definition. We fix a standard probability space $(X,\mu)$, that is, a probability space which is isomorphic to the interval $[0,1]$ with its Borel $\sigma$-algebra and the Lebesgue-measure. 

A Borel bijection $T$ of $X$ is called a \textbf{non-singular automorphism} if for all measurable $A\subseteq X$, one has $\mu(A)=0$ if and only if $\mu(T\inv(A))=0$. The group of all these automorphisms is denoted by $\Aut^*(X,\mu)$, two such automorphisms being identified if they coincide on a full measure set. We then define the \textbf{uniform metric} $d_u$ on $\Aut^*(X,\mu)$ by: for all $T,U\in\Aut^*(X,\mu)$, 
$$d_u(T,U)=\mu(\{x\in X: T(x)\neq U(x)\}).$$
This is a complete metric, though far from being separable (e.g. the group $\mathbb S^1$ acts freely on itself, yielding an uncountable discrete subgroup of $(\Aut^*(X,\mu),d_u)$). But among closed subgroups of $(\Aut^*(X,\mu),d_u)$, full groups are separable.
Full groups are invariants of orbit equivalence attached to nonsingular actions of countable groups on $(X,\mu)$: given a non-singular action of a countable group $\Gamma$ on $(X,\mu)$, its full group $[\mathcal R_\Gamma]$ is the group of all $T\in\Aut^*(X,\mu)$ such that for every $x\in X$, $T(x)\in\Gamma\cdot x$.

Since every subgroup of a quasi non-archimedean group is quasi non-archimedean for the induced topology, the following result implies that full groups  are quasi non-archimedean. 

\begin{thm}[Kechris] \label{thm:AutQNA}$\Aut^*(X,\mu)$ is quasi non-archimedean for the uniform metric.
\end{thm}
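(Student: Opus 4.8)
The plan is to show directly that the uniform metric makes $\Aut^*(X,\mu)$ quasi non-archimedean by exploiting the key feature of this metric: $d_u(T,\mathrm{Id})$ measures the \emph{measure of the support} of $T$, i.e. the set where $T$ moves points. The crucial observation is that if $g_1,\ldots,g_n$ each have support of small measure, then any word in the $g_i$ can only move a point $x$ if at least one of the letters moves a point in the orbit segment traversed, so the support of the whole group $\langle g_1,\ldots,g_n\rangle$ is controlled by the union of the supports $\supp(g_1)\cup\cdots\cup\supp(g_n)$.

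More precisely, first I would observe that for any $T,U\in\Aut^*(X,\mu)$ one has $\supp(TU)\subseteq \supp(T)\cup T\inv(\supp(U))$, and since the $g_i$ are nonsingular rather than measure-preserving, I must be careful: $T\inv$ of a set of positive measure can have large measure. This is the main subtlety. The right invariant to track is not the measure of the support directly but the \emph{orbit} $\langle g_1,\ldots,g_n\rangle\cdot x$ through each point. Here is the clean formulation: let $A=\supp(g_1)\cup\cdots\cup\supp(g_n)$, the set of points moved by at least one generator. A point $x\notin\bigcup_{\gamma\in\langle g_1,\ldots,g_n\rangle}\gamma\inv(A)$ is fixed by every element of the group. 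So the support of the generated group is contained in the \textbf{saturation} of $A$ under the group action, namely $B=\bigcup_{\gamma}\gamma\inv(A)$, which is the smallest $\langle g_1,\ldots,g_n\rangle$-invariant set containing $A$.

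Thus the task reduces to: given a target neighborhood $V=\{T:d_u(T,\mathrm{Id})<\eps\}$ and an integer $n$, choose $U=\{T:d_u(T,\mathrm{Id})<\delta\}$ so that whenever each $g_i$ has support of measure $<\delta$, the saturation $B$ of $A=\bigcup\supp(g_i)$ has measure $<\eps$. The difficulty is that saturating under a non-singular action can blow up measure arbitrarily, so a naive $\delta$ independent of the $g_i$ will not work in general — \emph{but} it does work for a single group once we fix the generators, which is not what we need. The resolution I expect is to use a quasi-invariant control: because each $g_i$ is non-singular, there is a Radon--Nikodym cocycle, and one can bound the measure of $B$ in terms of the measure of $A$ and the cocycle of words of bounded length. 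However, words of unbounded length still cause trouble. I anticipate the actual argument restricts attention to the fact that $B$ is a countable union $\bigcup_\gamma \gamma\inv(A)$ and uses that each $\gamma\inv(A)\subseteq B$ is itself $\langle g_1,\ldots,g_n\rangle$-invariant up to the generators; concretely, $B$ is invariant, so $g_i(B)=B$ for all $i$, meaning $g_i$ acts as a non-singular automorphism of $(B,\mu|_B)$ and fixes $X\setminus B$ pointwise.

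The main obstacle, and the heart of the proof, is therefore controlling $\mu(B)$ in terms of $\mu(A)<n\delta$. I expect the key lemma to be that for a non-singular automorphism, one can still get an effective bound on the measure of the saturation when the generators are taken uniformly small, perhaps by passing to the \emph{symmetric} part or by using that the saturation can be built up in stages $A_0=A$, $A_{k+1}=A_k\cup\bigcup_i g_i\inv(A_k)$, and bounding the incremental measure added at each stage via the cocycle. If the cocycle is not uniformly bounded this fails, which suggests the true argument may instead pick $U$ to lie inside a fixed measure-preserving-like neighborhood, or invoke that on small sets the non-singular action is approximately measure-preserving after rescaling. I would structure the write-up as: (i) the support/saturation inclusion lemma, (ii) reduction of the definition of quasi non-archimedean to bounding $\mu(B)$, and (iii) the measure estimate for $\mu(B)$, with step (iii) being where the careful non-singular analysis lives.
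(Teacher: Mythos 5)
Your step (iii) --- the measure estimate for the saturation $B$ --- is where the proposal stalls, and you yourself flag it as unresolved; you are right that no Radon--Nikodym/cocycle bound can rescue it, since words of unbounded length destroy any such estimate. But the difficulty is illusory, and this is precisely the point of the paper's (three-line) proof: \emph{the saturation of $A$ is $A$ itself}. If $\supp(g_i)\subseteq A$ for every $i$, then each $g_i$ fixes every point of $X\setminus A$, hence so does every word $\gamma$ in the $g_i$'s; since $\gamma$ is a bijection fixing $X\setminus A$ pointwise, it maps $A$ onto $A$ and $\supp(\gamma)\subseteq A$. Equivalently, one has the elementary inclusions $\supp(TU)\subseteq\supp(T)\cup\supp(U)$ and $\supp(T\inv)=\supp(T)$ (if $x$ is fixed by both $U$ and $T$, then $TU(x)=T(x)=x$; no preimages of sets ever enter), so the automorphisms supported inside $A$ already form a group. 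Consequently, taking $\delta=\eps/n$, every $\gamma\in\langle g_1,\ldots,g_n\rangle$ satisfies $d_u(\gamma,\mathrm{id}_X)=\mu(\supp(\gamma))\leq\mu(A)<n\cdot(\eps/n)=\eps$, which is the whole argument.

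The inclusion you wrote, $\supp(TU)\subseteq \supp(T)\cup T\inv(\supp(U))$, is what led you astray: it introduces preimages of sets, at which point quasi-invariance of $\mu$ looks relevant and non-singularity looks like ``the main subtlety.'' In fact non-singularity plays no role at all --- one never pushes a measure forward, one only measures the fixed set $A$, whose measure is controlled by the choice of $\delta$ alone. So your reductions (i)--(ii) are sound, but the proof as written is incomplete: the lemma you anticipate in (iii), a bound on $\mu(B)$ in terms of $\mu(A)$, is unnecessary because $B=A$, and in the generality you pose it (for saturations under genuinely non-singular actions) it is false.
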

\begin{proof}
Define the \textbf{support} of $T\in\Aut^*(X,\mu)$ to be the set of all $x\in X$ such that $T(x)\neq x$. Note that $d_u(\mathrm{id}_X,T)$ is precisely the measure of the support of $T$. 

Let $\epsilon>0$ and $n\in\N$, and consider the open ball $U:=B_{d_u}(\mathrm{id}_X,\epsilon)$. Suppose that $g_1,...,g_n$ belong to $V:=B_{d_u}(\mathrm{id_X},\epsilon/n)$, and let $A$ be the reunion of their supports. By assumption, $A$ has measure less than $\epsilon$. Then the group generated by $g_1,...,g_n$ is contained in the group of elements supported in $A$, which is itself a subset of $U=B_{d_u}(\mathrm{id}_X,\epsilon)$.
\end{proof}

The following proposition shows that the class of quasi non-archimedean groups satisfies basically the same closure properties as the class of non-archimedean groups. 

\begin{prop}\label{prop:QNAstable}The class of quasi non-archimedean groups is closed under taking subgroups (with the induced topology), products and quotients. \qed
\end{prop}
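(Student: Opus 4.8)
The plan is to verify the quasi non-archimedean condition directly in each of the three cases, following the pattern of the standard proofs for the non-archimedean class. Throughout, I fix $n\in\N$ and recall that the condition asks, given a neighborhood $V$ of the identity, for a smaller neighborhood $U\subseteq V$ such that any $n$ elements of $U$ generate a subgroup lying in $V$. Since all three verifications boil down to the same mechanism, the work is entirely bookkeeping.

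For subgroups, let $G$ be quasi non-archimedean and let $H\le G$ carry the induced topology. Given a neighborhood $V'$ of the identity in $H$, I would write $V'=V\cap H$ for some neighborhood $V$ of the identity in $G$, apply the hypothesis to obtain $U\subseteq V$ with the required property in $G$, and set $U'=U\cap H$. For $g_1,\dots,g_n\in U'$ we have $g_1,\dots,g_n\in U$, so $\langle g_1,\dots,g_n\rangle\subseteq V$; as this group also lies in $H$, it lies in $V\cap H=V'$.

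For quotients, let $N\lhd G$ and let $\pi\colon G\to G/N$ be the quotient map, which is open and surjective. Given a neighborhood $\bar V$ of the identity in $G/N$, I set $V=\pi^{-1}(\bar V)$ and apply the hypothesis to get $U\subseteq V$. Openness of $\pi$ makes $\bar U:=\pi(U)$ a neighborhood of the identity, and $\bar U\subseteq\pi(V)=\bar V$. Any $\bar g_1,\dots,\bar g_n\in\bar U$ lift to $g_1,\dots,g_n\in U$, whence $\langle g_1,\dots,g_n\rangle\subseteq V$; applying $\pi$ and using that a homomorphism carries a generated subgroup to the subgroup generated by the images gives $\langle\bar g_1,\dots,\bar g_n\rangle=\pi(\langle g_1,\dots,g_n\rangle)\subseteq\pi(V)=\bar V$. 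The only points requiring care here are the openness of $\pi$ and the lifting of the $\bar g_j$, both of which are routine for topological quotient groups.

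For products, let $G=\prod_{i\in I}G_i$ with each $G_i$ quasi non-archimedean. A basic neighborhood of the identity has the form $V=\prod_{i\in I}V_i$ with $V_i=G_i$ for all but finitely many $i$. For each $i$ I would choose $U_i\subseteq V_i$ witnessing the quasi non-archimedean property of $G_i$, taking $U_i=G_i$ whenever $V_i=G_i$, so that $U=\prod_{i\in I}U_i$ is again a basic neighborhood of the identity. For $g^{(1)},\dots,g^{(n)}\in U$ the generated subgroup projects in each coordinate onto $\langle g^{(1)}_i,\dots,g^{(n)}_i\rangle\subseteq V_i$, hence is contained in $\prod_{i\in I}\langle g^{(1)}_i,\dots,g^{(n)}_i\rangle\subseteq V$. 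I expect the main thing to get right to be precisely this cofiniteness condition: legitimacy of the choice $U=\prod_i U_i$ rests on taking $U_i=G_i$ on the cofinite part, and nothing beyond this bookkeeping is genuinely delicate.
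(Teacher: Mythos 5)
Your proof is correct; the paper states this proposition without proof (the \qed marks it as routine), and your three verifications are precisely the standard argument being omitted. The only step you leave implicit---reducing from arbitrary neighborhoods to the basic ones you work with in the subgroup and product cases---is immediate, since a witness $U$ obtained for a smaller neighborhood $V'\subseteq V$ also witnesses the condition for $V$ itself.
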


The next proposition highlights the main difference between non-archimedean and quasi non-archimedean groups. From now on, we will restrict ourselves to the narrower but well-behaved class of Polish groups, that is, separable groups whose topology admits a compatible complete metric, e.g. full groups for the uniform topology, the  group $\Aut^*(X,\mu)$ endowed with the weak topology, or the unitary group of a separable Hilbert space endowed with the strong operator topology. Let us point out that a locally compact group is Polish if and only if it is second-countable (see \cite[Thm. 5.3]{MR1321597}).

Recall that if $G$ is a Polish group and $(X,\mu)$ is a standard (non-atomic) probability space, then the group $\LL^0(X,\mu,G)$ of measurable maps from $X$ to $G$ is a Polish group for the topology of convergence in measure, two such maps being identified if they coincide on a full measure set. A basis of neighborhoods of the identity for this topology is given by the sets
$$\tilde U_\epsilon=\{f\in\LL^0(X,\mu,G): \mu(\{x\in X: f(x)\not\in U\})<\epsilon\},$$
where $U$ is an open neighborhood of the identity in $G$ and $\epsilon>0$. The Polish group $\LL^0(X,\mu,G)$ enjoys the two following nice properties (see e.g. \cite[Chap. 19]{MR2583950}) .
\begin{itemize}
\item $G$ embeds into $\LL^0(X,\mu,G)$ via constant maps.
\item $\LL^0(X,\mu,G)$ is connected, in fact contractible.
\end{itemize}

\begin{prop}\label{prop:embed in QNA}Let $G$ be a quasi non-archimedean Polish group. Then $\LL^0(X,\mu,G)$ is quasi non-archimedean. In particular any quasi non-archimedean Polish group embeds in a connected quasi non-archimedean Polish group. 
\end{prop}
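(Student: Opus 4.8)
The plan is to prove the main assertion directly, and then read off the ``in particular'' from it together with the two listed properties of $\LL^0(X,\mu,G)$ (that $G$ embeds via constant maps and that $\LL^0(X,\mu,G)$ is connected). So the real work is to verify that $\LL^0(X,\mu,G)$ is quasi non-archimedean. The whole argument rests on one structural observation: the group operations in $\LL^0(X,\mu,G)$ are computed pointwise, so for any group word $w$ in $n$ letters and any $f_1,\dots,f_n\in\LL^0(X,\mu,G)$ we have $w(f_1,\dots,f_n)(x)=w(f_1(x),\dots,f_n(x))$ for a.e.\ $x$. The quasi non-archimedean property of $G$ controls the \emph{entire} subgroup generated by $n$ sufficiently small elements, hence it controls $w(f_1(x),\dots,f_n(x))$ uniformly over all words $w$ at once, and this uniformity is exactly what lets us pass from a pointwise statement to a statement in measure.

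First I would reduce to basic neighborhoods: since the sets $\tilde V_\delta$ (with $V$ a neighborhood of the identity in $G$ and $\delta>0$) form a basis of neighborhoods of the identity in $\LL^0(X,\mu,G)$, it suffices, given $n\in\N$ and such a basic neighborhood $\tilde V_\delta$, to produce a neighborhood $\tilde W\subseteq\tilde V_\delta$ of the identity with the property that $\langle f_1,\dots,f_n\rangle\subseteq\tilde V_\delta$ whenever $f_1,\dots,f_n\in\tilde W$. Now I apply the hypothesis that $G$ is quasi non-archimedean to the pair $(V,n)$: this yields a neighborhood of the identity $U\subseteq V$ in $G$ such that for all $g_1,\dots,g_n\in U$ the subgroup $\langle g_1,\dots,g_n\rangle$ is contained in $V$. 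I then set $\tilde W:=\tilde U_{\delta/n}$.

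To verify this works, take $f_1,\dots,f_n\in\tilde U_{\delta/n}$ and let $A=\bigcup_{i=1}^n\{x: f_i(x)\notin U\}$, so that $\mu(A)<n\cdot(\delta/n)=\delta$ by the union bound. For $x\notin A$ all the values $f_i(x)$ lie in $U$, hence by the choice of $U$ every word satisfies $w(f_1,\dots,f_n)(x)=w(f_1(x),\dots,f_n(x))\in\langle f_1(x),\dots,f_n(x)\rangle\subseteq V$. Therefore $\{x: w(f_1,\dots,f_n)(x)\notin V\}\subseteq A$ for \emph{every} word $w$, which gives $w(f_1,\dots,f_n)\in\tilde V_\delta$ simultaneously for all $w$, i.e.\ $\langle f_1,\dots,f_n\rangle\subseteq\tilde V_\delta$. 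Finally $\tilde U_{\delta/n}\subseteq\tilde V_\delta$ because $U\subseteq V$ and $\delta/n\le\delta$, so $\tilde W\subseteq\tilde V_\delta$ as required; this establishes that $\LL^0(X,\mu,G)$ is quasi non-archimedean. The ``in particular'' then follows at once: $\LL^0(X,\mu,G)$ is a connected Polish group which is quasi non-archimedean by what we just proved, and $G$ embeds into it through the constant maps.

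I do not expect a genuine obstacle here; the only point requiring care is recognizing that quasi non-archimedeanness of $G$ is precisely the right hypothesis, since it bounds the whole generated group rather than a single product, thereby supplying the uniformity over all words that is needed for the union-bound estimate to close. The constant $\delta/n$ (rather than, say, $\delta$) is the natural choice dictated by distributing the allowed measure $\delta$ among the $n$ generators.
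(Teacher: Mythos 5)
Your proof is correct and follows essentially the same route as the paper: apply the quasi non-archimedean property of $G$ to the target neighborhood, pass to the $\LL^0$-neighborhood with measure parameter $\delta/n$, and use the union bound so that off a set of measure less than $\delta$ all generators take values in the witnessing neighborhood, whence every element of the generated group lies pointwise in the target neighborhood. Your explicit spelling-out of the pointwise evaluation of group words is exactly the uniformity the paper's proof uses implicitly, so there is nothing to add.
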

\begin{proof}
Let $\tilde U_\epsilon=\{f: \mu(\{x\in X: f(x)\not\in U\})<\epsilon\}$ be a basic neighborhood of the identity in $\LL^0(X,\mu,G)$. Let $n\in\N$ and $V$ be a corresponding neighborhood of the identity witnessing that $G$ is quasi non-archimedean. Consider the following open neighborhood of the identity in $\LL^0(X,\mu,G)$: 
$$\tilde V_{\epsilon/n}=\{f: \mu(\{x\in X: f(x)\not\in V\})<\epsilon/n\}.$$
Then if we let $f_1,...,f_n\in \tilde V_{\epsilon,n}$, the reunion of the sets $\{x\in X: f_i(x)\not\in V\}$ has measure less than $\epsilon$. By the definition of $V$ and $U$ the group generated by $f_1,... , f_n$ is a subset of $\tilde U_\epsilon$.
\end{proof}

\begin{rmq}Since non-archimedean groups are totally disconnected, the above proposition implies there are a lot more quasi non-archimedean groups than the non-archimedean ones.
\end{rmq}

The following proposition is inspired by Section (D) of Chapter 4 in \cite{MR2583950}, where it is shown that any continuous homomorphism from an infinitesimally finitely generated group into a full group is trivial.

\begin{prop}\label{prop:QNAtrivial}Any continuous homomorphism from an infinitesimally finitely generated group into a quasi non-archimedean group is trivial.
\end{prop}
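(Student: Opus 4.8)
The plan is to show that $\phi(G)$ is contained in \emph{every} neighborhood of the identity of the target group, which forces $\phi(G)=\{e\}$ since topological groups are Hausdorff. Throughout, write $\phi\colon G\to H$ for the continuous homomorphism, where $G$ is infinitesimally finitely generated and $H$ is quasi non-archimedean, and set $n=t_I(G)<+\infty$, so that every neighborhood of the identity in $G$ contains $n$ elements generating a dense subgroup. The whole argument is a single application of the two hypotheses, linked together by continuity and density.

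First I would fix an arbitrary neighborhood $V$ of the identity in $H$ and, using that topological groups are regular, choose a neighborhood $W$ of the identity with $\overline{W}\subseteq V$. Applying the quasi non-archimedean property of $H$ to $W$ and to this particular integer $n$, I obtain a neighborhood $U\subseteq W$ of the identity such that for any $g_1,\ldots,g_n\in U$, the group they generate is contained in $W$. The key point is that the integer $n$ in the quasi non-archimedean definition can be chosen to match the infinitesimal rank of $G$.

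Next, by continuity of $\phi$, the preimage $\phi^{-1}(U)$ is a neighborhood of the identity in $G$. Since $t_I(G)=n$, there exist $g_1,\ldots,g_n\in\phi^{-1}(U)$ generating a dense subgroup $\langle g_1,\ldots,g_n\rangle$ of $G$. Then $\phi(g_1),\ldots,\phi(g_n)\in U$, so by the choice of $U$ the subgroup they generate is contained in $W$; since $\phi$ is a homomorphism, this subgroup is exactly $\phi(\langle g_1,\ldots,g_n\rangle)$. Finally, density of $\langle g_1,\ldots,g_n\rangle$ in $G$ together with continuity of $\phi$ gives $\phi(G)\subseteq\overline{\phi(\langle g_1,\ldots,g_n\rangle)}\subseteq\overline{W}\subseteq V$. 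As $V$ was arbitrary, $\phi(G)$ lies in the intersection of all identity neighborhoods, namely $\{e\}$, so $\phi$ is trivial.

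The argument involves no real obstacle, but there is one point requiring care: the quasi non-archimedean property only guarantees that the generated subgroup lands inside $W$, whereas density forces the conclusion about $\phi(G)$ to pass through a \emph{closure}. This is precisely why I would insert the regularity step $\overline{W}\subseteq V$ at the very beginning, so that after closing up we still land inside the originally chosen $V$. Without this preliminary shrinking one would only obtain $\phi(G)\subseteq\overline{V}$, which is not by itself enough to conclude.
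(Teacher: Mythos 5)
Your proof is correct and follows essentially the same route as the paper's: apply the quasi non-archimedean property of $H$ with $n=t_I(G)$, pull the resulting neighborhood back through $\phi$, pick $n$ topological generators of $G$ inside it, and use density plus continuity to trap $\phi(G)$. The only (cosmetic) difference is that you invoke regularity of topological groups up front to get $\overline{W}\subseteq V$, whereas the paper concludes that $\phi(G)$ lies in the \emph{closure} of every identity neighborhood and then uses the Hausdorff property, which amounts to the same thing.
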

\begin{proof}
Let $\varphi: G\to H$ be such a morphism, let $V$ be any neighborhood of the identity in $H$, and let $n=t_I(G)$. Then there is a neighborhood of the identity $U$ in $H$ such that any subgroup of $H$ generated by $n$ elements of $U$ is contained in $V$. Since $\varphi$ is continuous, $\varphi\inv(U)$ is a neighborhood of the identity in $G$, and because $t_I(G)=n$ we may find $g_1,...,g_n\in\varphi\inv(U)$ which generate a dense subgroup in $G$. Then the closure of $\varphi(G)$ coincides with the closure of the group generated by $\varphi(g_1),...,\varphi(g_n)\in U$, which by assumption is contained in $\overline{V}$. So $\varphi(G)$ is contained in the closure of any neighborhood of the identity in $H$, and since $H$ is Hausdorff this means that $\varphi$ is trivial.
\end{proof}

\begin{cor}\label{cor: qna and ltfg are disjoint} 
The only topological group which is both infinitesimally finitely generated and quasi non-archimedean is the trivial group.\qed
\end{cor}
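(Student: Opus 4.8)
The plan is to apply Proposition \ref{prop:QNAtrivial} to the identity homomorphism of the group itself. Suppose $G$ is simultaneously infinitesimally finitely generated and quasi non-archimedean. The key observation is that the identity map $\mathrm{id}_G\colon G\to G$ is a continuous homomorphism whose source $G$ is infinitesimally finitely generated and whose target $G$ is quasi non-archimedean, so both hypotheses of Proposition \ref{prop:QNAtrivial} are met by a single map.

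First I would invoke Proposition \ref{prop:QNAtrivial} to conclude that $\mathrm{id}_G$ is trivial, i.e.\ it sends every element of $G$ to the identity. Since the identity map also sends each $g$ to itself, comparing the two descriptions forces $g$ to be the identity element for every $g\in G$, so $G$ is the trivial group. Conversely, the trivial group vacuously satisfies both definitions, which shows it is indeed the unique group enjoying both properties.

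There is no genuine obstacle here, as the argument is a direct substitution into the hypotheses of the preceding proposition. The only points that need checking are wholly routine: that $\mathrm{id}_G$ is a continuous group homomorphism (true for any topological group) and that it inherits the domain and codomain requirements of Proposition \ref{prop:QNAtrivial} directly from the assumption that $G$ possesses both properties. Hence the corollary follows at once.
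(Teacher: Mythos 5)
Your proof is correct and is exactly the argument the paper intends: the corollary is stated as an immediate consequence of Proposition \ref{prop:QNAtrivial}, obtained by applying that proposition to the identity homomorphism $\mathrm{id}_G\colon G\to G$, which forces every $g\in G$ to equal the identity. The routine verifications you flag (continuity of $\mathrm{id}_G$, and the trivial group satisfying both properties) are indeed all that remains, so nothing is missing.
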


\begin{cor}Every continuous homomorphism from a connected separable locally compact group into $(\Aut^*(X,\mu),d_u)$ is trivial. 
\end{cor}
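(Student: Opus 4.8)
The plan is to combine two results already established in the excerpt. The final statement asserts that every continuous homomorphism from a connected separable locally compact group $G$ into $(\Aut^*(X,\mu),d_u)$ is trivial. First I would invoke Theorem \ref{thm:infinitesimally finitely generated2}, which tells us that any separable connected locally compact group is infinitesimally finitely generated, i.e.\ $t_I(G)<+\infty$. This handles the source of the homomorphism.

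Next I would address the target. By Theorem \ref{thm:AutQNA} (Kechris), the group $(\Aut^*(X,\mu),d_u)$ is quasi non-archimedean for the uniform metric. So the target is a quasi non-archimedean group.

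With these two facts in hand, the result is immediate from Proposition \ref{prop:QNAtrivial}, which states that any continuous homomorphism from an infinitesimally finitely generated group into a quasi non-archimedean group is trivial. Applying this proposition with the infinitesimally finitely generated group $G$ and the quasi non-archimedean group $(\Aut^*(X,\mu),d_u)$ yields that every continuous homomorphism $\varphi\colon G\to(\Aut^*(X,\mu),d_u)$ is trivial, as desired.

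There is essentially no obstacle here, since this corollary is a direct specialization of the general machinery: the heavy lifting was done in proving that connected separable locally compact groups are infinitesimally finitely generated and that $\Aut^*(X,\mu)$ is quasi non-archimedean. The only thing worth a moment's care is confirming that the uniform topology is the relevant one throughout, so that Kechris's quasi non-archimedean conclusion and the homomorphism's continuity are taken with respect to the same metric $d_u$; with that matched, the three-step chain closes.
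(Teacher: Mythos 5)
Your proposal is correct and is exactly the paper's argument: combine Theorem \ref{thm:infinitesimally finitely generated2} (the source is infinitesimally finitely generated), Theorem \ref{thm:AutQNA} (the target is quasi non-archimedean for $d_u$), and Proposition \ref{prop:QNAtrivial}. Nothing further is needed.
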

\begin{proof}Since every connected separable locally compact group is infinitesimally finitely generated by Theorem \ref{thm:infinitesimally finitely generated2} and $\Aut^*(X,\mu)$ is quasi non-archimedean by Theorem \ref{thm:AutQNA}, the previous proposition readily applies. 
\end{proof}

As a consequence of the previous proposition and Theorem \ref{thm:infinitesimally finitely generated}, we have the following interesting characterizations of connectedness and total disconnectedness for locally compact separable groups.

\begin{thm}\label{thm: cara lc qna or ltfg}Let $G$ be a locally compact separable group. Then the following hold:
\begin{enumerate}[(1)]
\item $G$ is connected if and only if $G$ is infinitesimally finitely generated.
\item $G$ is totally disconnected if and only if $G$ is quasi non-archimedean.
\end{enumerate}
\end{thm}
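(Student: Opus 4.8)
The plan is to derive both equivalences from results already established, treating the two statements in turn. Statement (1) is nothing but a restatement of Theorem~\ref{thm:infinitesimally finitely generated}, so I would simply invoke it: the forward implication is Theorem~\ref{thm:infinitesimally finitely generated2}, and the reverse implication follows from the observation (recorded just after the definition of infinitesimally generated groups) that an infinitesimally finitely generated group is infinitesimally generated, combined with the fact that van Dantzig's theorem forces an infinitesimally generated locally compact group to be connected.

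For statement (2), I would prove the two implications separately. The forward direction is the easy one: if $G$ is totally disconnected and locally compact, then van Dantzig's theorem provides a basis of neighborhoods of the identity consisting of compact open subgroups, so $G$ is in fact non-archimedean, hence quasi non-archimedean.

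The substantive direction is the converse. Suppose $G$ is quasi non-archimedean and locally compact, and let $G^\circ$ denote the connected component of the identity. Then $G^\circ$ is a closed connected subgroup of $G$. Because $G$ is separable and locally compact, its closed subgroup $G^\circ$ is again separable (by \cite{MR0447455}), and it is of course locally compact and connected. Applying statement (1) to $G^\circ$, I conclude that $G^\circ$ is infinitesimally finitely generated. On the other hand, $G^\circ$ is a subgroup of the quasi non-archimedean group $G$, so it is itself quasi non-archimedean by Proposition~\ref{prop:QNAstable}. By Corollary~\ref{cor: qna and ltfg are disjoint}, a group that is simultaneously infinitesimally finitely generated and quasi non-archimedean must be trivial; hence $G^\circ=\{e\}$, which is precisely to say that $G$ is totally disconnected.

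I expect no serious obstacle, since every ingredient is already in place. The one point that genuinely requires care is the separability of $G^\circ$: statement (1) applies only to separable groups, so one really does need to know that passing to the identity component preserves separability, which is where the fact that closed subgroups of separable locally compact groups are separable gets used. Everything else is a direct assembly of van Dantzig's theorem, the stability of the quasi non-archimedean class under subgroups, and the incompatibility of the two properties recorded in Corollary~\ref{cor: qna and ltfg are disjoint}.
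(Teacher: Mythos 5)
Your proposal is correct and takes essentially the same route as the paper: van Dantzig's theorem gives both easy directions, and the converse of (2) is obtained by applying (1) to $G^\circ$ (which is quasi non-archimedean by Proposition~\ref{prop:QNAstable}) and concluding triviality, the paper citing Proposition~\ref{prop:QNAtrivial} where you cite its Corollary~\ref{cor: qna and ltfg are disjoint}. Your explicit verification that $G^\circ$ is separable (needed to apply (1), via the fact that closed subgroups of separable locally compact groups are separable) is a point the paper's proof leaves implicit, and you handle it correctly.
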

\begin{proof}
If $G$ is not connected but infinitesimally finitely generated, then $G/G^0$ must also be infinitesimally finitely generated, which is impossible by van Dantzig's theorem. The converse is provided by Theorem \ref{thm:infinitesimally finitely generated}.

If $G$ is totally disconnected, then $G$ is non-archimedean by van Dantzig's theorem. But this implies that $G$ is quasi non-archimedean. For the converse, suppose $G$ is quasi non-archimedean. Then $G^0$ also is, but then by (1) and Proposition \ref{prop:QNAtrivial} it must be trivial.
\end{proof}

\begin{rmq}
As was pointed out by Caprace and Cornulier, one can prove $(2)$ more directly. Indeed, if $G^0$ is non-trivial then it admits a non-trivial one-parameter subgroup which is in particular infinitesimally $2$-generated, contradicting Proposition \ref{prop:QNAtrivial}. This actually gives a proof that a locally compact group is quasi non-archimedean if and only if it is totally disconnected, regardless of its separability.
\end{rmq}

Let us now give an example of a totally disconnected Polish group which is quasi non-archimedean, but not non-archimedean. This class of examples was introduced by Tsankov \cite[Sec. 5]{MR2213623}, using work of Solecki \cite{MR1708146}. We denote by $\mathfrak S_\infty$ the group of all permutations of the integers, equipped with its Polish topology of pointwise convergence. Recall that every non-archimedean Polish group arises as a closed subgroup of $\mathfrak S_\infty$ (see \cite[Thm. 1.5.1]{MR1425877}). Here, the groups that we will consider are subgroups of $\mathfrak S_\infty$, but equipped with a Polish topology which refines the topology of pointwise convergence.

\begin{defn}
A \textbf{lower semi-continuous submeasure} on $\N$ is a function $\lambda: \mathcal P(\N)\to [0,+\infty]$ such that the following hold:
\begin{itemize}
\item $\lambda(\emptyset)=0$;
\item for all $n\in\N$, we have $0<\lambda(\{n\})<+\infty$;
\item for all $A\subseteq B\subseteq \N$, we have $\lambda(A)\leq \lambda(B)$;
\item (\textit{subadditivity}) for all $A,B\subseteq \N$, we have $\lambda(A\cup B)\leq \lambda(A)+\lambda(B)$;
\item (\textit{lower semi-continuity}) for every increasing sequence $(A_k)_{k\in\N}$ of subsets of $\N$, we have $\lambda(\bigcup_{k\in\N} A_k)=\lim_{k\in\N}\lambda(A_k)$.
\end{itemize}
\end{defn}

We associate to every lower semi-continuous submeasure $\lambda$ on $\N$ a subgroup of $\mathfrak S_\infty$, denoted by $\mathfrak S_\lambda$, defined by
$$\mathfrak S_\lambda=\{\sigma\in\mathfrak S_\infty: \lambda(\supp \sigma\setminus\{0,...,n\})\to 0\;\;[n\to+\infty]\},$$
where $\supp \sigma=\{n\in\N:\sigma(n)\neq n\}$. Note that since $\mu(\{0,...,n\})<+\infty$ for every $n\in\N$, the support of every $\sigma\in\mathfrak S_\lambda$ has finite measure. Also, if $\lambda$ is actually a measure, then $\mathfrak S_\lambda=\{\sigma\in\mathfrak S_\infty: \lambda(\supp \sigma)<+\infty\}$; furthermore if $\lambda$ is a probability measure then $\mathfrak S_\lambda=\mathfrak S_\infty$.

The group $\mathfrak S_\lambda$ is equipped with a natural left-invariant metric $d_\lambda$ analogous to the uniform metric on $\Aut^*(X,\mu)$ defined by 
$$d_\lambda(\sigma,\sigma')=\lambda(\{n\in\N: \sigma(n)\neq\sigma'(n)\}).$$
Note that the condition $\lambda(\supp \sigma\setminus\{0,...,n\})\to 0$ ensures that the countable group of permutations of finite support is dense in $\mathfrak S_\lambda$ which is thus separable.
It is a theorem of Tsankov that $\mathfrak S_\lambda$ is actually a Polish group. The following result is a straightforward adaptation of Theorem \ref{thm:AutQNA}, replacing $d_u$ by $d_\lambda$.

\begin{prop}\label{prop:Slambda QNA}Let $\lambda$ be a lower semi-continuous submeasure on $\N$. Then $\mathfrak S_\lambda$ is quasi non-archimedean. \qed
\end{prop}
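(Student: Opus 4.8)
The plan is to mimic the proof of Theorem \ref{thm:AutQNA} essentially verbatim, the point being that the only features of the measure $\mu$ exploited there are subadditivity and monotonicity, and both of these are built into the definition of a lower semi-continuous submeasure. The starting observation is the identity $d_\lambda(\mathrm{id},\sigma)=\lambda(\supp\sigma)$, which is immediate from the definition of $d_\lambda$ since the set where $\sigma$ differs from the identity is exactly $\supp\sigma$.

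Concretely, I would fix $n\in\N$ and a basic neighborhood of the identity, which we may take to be an open ball $U:=B_{d_\lambda}(\mathrm{id},\epsilon)$, and then propose $V:=B_{d_\lambda}(\mathrm{id},\epsilon/n)$ as the witness (following the convention of the proof of Theorem \ref{thm:AutQNA}, where the target neighborhood is $U$ and the smaller witnessing ball is $V$). Given $g_1,\ldots,g_n\in V$, I would set $A=\bigcup_{i=1}^n\supp g_i$. By the identity above each $\supp g_i$ has $\lambda$-value less than $\epsilon/n$, so subadditivity yields $\lambda(A)\leq\sum_{i=1}^n\lambda(\supp g_i)<\epsilon$.

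The heart of the argument is the observation that \emph{every} element of $\langle g_1,\ldots,g_n\rangle$, not merely the generators, is supported inside $A$. This is a purely algebraic fact about permutations: one has $\supp(\sigma\tau)\subseteq\supp\sigma\cup\supp\tau$ and $\supp(\sigma^{-1})=\supp\sigma$, so by induction every word in the $g_i$ has support contained in $A$. Hence for any such $\sigma$ we get $\supp\sigma\subseteq A$, and monotonicity gives $d_\lambda(\mathrm{id},\sigma)=\lambda(\supp\sigma)\leq\lambda(A)<\epsilon$, i.e. $\sigma\in U$. Thus $\langle g_1,\ldots,g_n\rangle\subseteq U$, which is exactly what is required.

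I do not expect any genuine obstacle: the proof of Theorem \ref{thm:AutQNA} never used additivity or non-atomicity of $\mu$, only the two submeasure axioms just invoked, so the transcription is automatic. The single point worth spelling out carefully is the stability of supports under composition and inversion, since it is this — rather than any property of $\lambda$ — that confines the whole generated subgroup to $A$; everything else reduces to reading off the defining properties of a lower semi-continuous submeasure.
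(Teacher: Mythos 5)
Your proof is correct and is exactly the argument the paper has in mind: the paper states the proposition with a \qed precisely because it is the proof of Theorem \ref{thm:AutQNA} transcribed with $d_u$ replaced by $d_\lambda$, using only monotonicity, subadditivity, and the fact that supports are stable under composition and inversion. Spelling out that last algebraic point, as you do, is the only detail the paper leaves implicit.
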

 
It is easily checked that the topology of $\mathfrak S_\lambda$ refines the topology induced  by $\mathfrak S_\infty$, so that $\mathfrak S_\lambda$ is always totally disconnected, and that the open subgroups of $\mathfrak S_\lambda$ separate points from the identity (in particular, $\mathfrak S_\lambda$ is not locally generated). The following example shows that it can furthermore fail to be non-archimedean. Note that this is just a particular case of a more general phenomenon: one can actually characterize when the topology fails to be zero-dimensional\footnote{A topology is zero-dimensional if it has a basis made of clopen sets.} (see \cite[Thm. 5.3]{MR2213623}; the example below is taken from \cite[Cor. 4]{Malicki:2015fj}).

\begin{exam}
Consider the measure $\lambda$ on $\N$ defined by 
$$\lambda(A)=\sum_{n\in A} \frac 1n.$$
Then $\mathfrak S_\lambda$ is not a non-archimedean group. Indeed, if we fix $\epsilon>0$ and $N\in\N$, we can find a finite family $(A_i)_{i=1}^N$ of disjoint subsets  of $\N$ such that for every $i\in\{1,...,N\}$, $\frac \epsilon 2<\lambda(A_i)<\epsilon$. For all $i\in\{1,...,N\}$, let $\sigma_i$ be a permutation whose support is equal to $A_i$. Then $\sigma=\prod_{i=1}^N\sigma_i$ is at distance at least $N\epsilon/2$ from the identity, so that the ball of radius $\epsilon$ around the identity generates a group which contains elements arbitrarily far away from the identity. 
\end{exam}

\begin{cor}\label{Corollary:exists td qna not na}There exists a totally disconnected Polish group which is quasi non-archimedean, but not non-archimedean. \qed
\end{cor}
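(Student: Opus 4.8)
The plan is to produce the group by assembling results already established rather than by a fresh construction: I would take $\mathfrak S_\lambda$ for the specific submeasure of the preceding Example, namely the measure $\lambda(A)=\sum_{n\in A}\frac1n$, and check that it meets all four requirements. Since $\lambda$ is countably additive with $0<\lambda(\{n\})=1/n<+\infty$ for every $n$, it is in particular a lower semi-continuous submeasure, so the entire theory developed for $\mathfrak S_\lambda$ applies to it.

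First I would record that $\mathfrak S_\lambda$ is Polish by Tsankov's theorem, and that it is totally disconnected because its topology refines the pointwise-convergence topology inherited from $\mathfrak S_\infty$, which is itself totally disconnected. Next, Proposition \ref{prop:Slambda QNA} furnishes, for free, that $\mathfrak S_\lambda$ is quasi non-archimedean. Finally I would invoke the preceding Example, which shows precisely that for this $\lambda$ the group $\mathfrak S_\lambda$ is not non-archimedean: for any $\epsilon>0$ and $N\in\N$ one finds disjoint $A_1,\dots,A_N$ with $\epsilon/2<\lambda(A_i)<\epsilon$ and permutations $\sigma_i$ with $\supp\sigma_i=A_i$, so that each $\sigma_i$ lies in the $\epsilon$-ball about the identity while $d_\lambda(\mathrm{id},\prod_{i=1}^N\sigma_i)=\sum_{i=1}^N\lambda(A_i)>N\epsilon/2$. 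Thus the subgroup generated by any $d_\lambda$-ball is unbounded, ruling out a basis of open subgroups. Collecting these observations gives the required group.

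There is no genuine obstacle, since every ingredient is already proved; the corollary is purely a matter of exhibiting a single $\lambda$ that simultaneously satisfies the hypothesis of Proposition \ref{prop:Slambda QNA} and the conclusion of the Example. The only point I would state carefully is why the unboundedness of $\langle B(\mathrm{id},\epsilon)\rangle$ genuinely contradicts non-archimedeanity: an open subgroup contained in a small ball would have to contain the whole generated subgroup, which the products $\prod_{i=1}^N\sigma_i$ show to be impossible.
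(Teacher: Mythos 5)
Your proposal is correct and follows essentially the same route as the paper: the corollary is stated with an immediate \qed precisely because it assembles the preceding ingredients --- Tsankov's theorem (Polishness), the remark that the topology of $\mathfrak S_\lambda$ refines that of $\mathfrak S_\infty$ (total disconnectedness), Proposition \ref{prop:Slambda QNA} (quasi non-archimedean), and the Example with $\lambda(A)=\sum_{n\in A}\frac 1n$ (not non-archimedean). Your additional care in spelling out why an unbounded group generated by every ball rules out a basis of open subgroups is a welcome but minor elaboration of what the paper leaves implicit.
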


Let us now give examples of totally disconnected Polish group which are infinitesimally finitely generated. To do this, we will upgrade a result of Stevens \cite{zbMATH03966486} who showed the existence of totally disconnected infinitesimally generated Polish groups: we will show that her examples are actually infinitesimally finitely generated. This will be a consequence of the following general statement, which also implies the well-known fact that $\R$ has a dense $G_\delta$ of pairs of topological generators.

\begin{thm}\label{thm: Z 1/2 is lt2g}
Let $G$ be an abelian Polish group which contains the group $\Z[1/2]$ of dyadic rationals as a dense subgroup, and assume furthermore that in the topology induced by $G$ on $\Z[1/2]$, we have $\frac 1{2^n}\to 0$ $[n\to+\infty]$. Then for every $g_0\in\Z[1/2]$, the set of $h\in G$ such that $\la g, h\ra$ generate a dense subgroup of $G$ is dense.

In particular there is a dense $G_\delta$ set of couples of topological generators of $G$ in $G^2$ and so $G$ is infinitesimally finitely generated with infinitesimal rank at most $2$.
\end{thm}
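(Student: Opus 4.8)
The plan is to fix $g_0\in\Z[1/2]\setminus\{0\}$ (the case $g_0=0$ must be excluded, since $\la h\ra$ is never dense in, e.g., $G=\R$; this restriction is harmless for the consequences, as $\Z[1/2]\setminus\{0\}$ is still dense) and to show that $D:=\{h\in G:\overline{\la g_0,h\ra}=G\}$ is a dense $G_\delta$. Since $G$ is Polish it is second countable, so I fix a countable basis $(B_k)_{k\in\N}$ of nonempty open sets. Because $G$ is abelian, $\la g_0,h\ra=\{mg_0+nh:m,n\in\Z\}$, so each set $O_k:=\{h:\la g_0,h\ra\cap B_k\neq\emptyset\}=\bigcup_{m,n\in\Z}\{h:mg_0+nh\in B_k\}$ is open (each map $h\mapsto mg_0+nh$ is continuous). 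As $(B_k)$ is a basis we have $D=\bigcap_k O_k$, so $D$ is $G_\delta$; and since $G$ is a Baire space it suffices to prove that each $O_k$ is \emph{dense}.

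The heart of the matter — and the main obstacle — is the density of a single $O_k$. Fix a nonempty open $W$ and a basic open $B_k$; I must produce $h\in W$ and integers $m,L\ge 0$ with $2^Lh+mg_0\in B_k$. The idea is to force the membership in $B_k$ to hold \emph{exactly}: pick a dyadic $\beta\in B_k$ and a dyadic $p=p_0/2^{L_0}\in W$ (possible as $\Z[1/2]$ is dense), write $g_0=a/2^b$ with $a\neq 0$, and seek $h$ of the form $h=(\beta-mg_0)/2^L$. Such $h$ lies in $\Z[1/2]\subseteq G$ because $\Z[1/2]$ is $2$-divisible, and it satisfies $2^Lh+mg_0=\beta\in B_k$ automatically; the only remaining task is to arrange $h\in W$. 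For $L\ge L_0$ I choose $m\in\Z$ so that $|\beta-p_02^{L-L_0}-mg_0|\le |a|/2^{b+1}$ in $\Q$ (such $m$ exists because $\{mg_0\}=\frac{a}{2^b}\Z$ has consecutive gaps $|a|/2^b$). Then the remainder $\xi:=\beta-p_02^{L-L_0}-mg_0\in\Z[1/2]$ has absolute value at most $|a|/2^{b+1}$ and denominator dividing $2^{\max(b,b_\beta)}$ (the integer $p_02^{L-L_0}$ contributes nothing to the denominator), so $\xi$ ranges over a \emph{fixed finite} set $F\subseteq\Z[1/2]$ as $L$ varies. A direct computation gives $h=(\beta-mg_0)/2^L=p+\xi/2^L$. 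The crux is then that $\tfrac1{2^n}\to 0$ in $G$ forces every fixed dyadic divided by $2^L$ to tend to $0$, and since $F$ is finite this is uniform; hence $\xi/2^L\to 0$ and $h=p+\xi/2^L\in W$ for $L$ large. This is precisely where all three hypotheses are used: density and $2$-divisibility of $\Z[1/2]$ build the scaffolding, the co-boundedness of $\la g_0\ra$ (absorbing the unbounded ``integer part'' of $2^Lh$) keeps the remainder of bounded size and denominator, and $\tfrac1{2^n}\to 0$ annihilates that remainder.

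This shows every $O_k$ is dense open, so $D$ is a dense $G_\delta$, which proves the main assertion. For the ``in particular'' statement, the set $P=\{(g,h)\in G^2:\overline{\la g,h\ra}=G\}$ is $G_\delta$ in $G^2$ by the identical basis argument, and it is dense: given a basic open $U_1\times U_2$, pick $g_0\in U_1\cap(\Z[1/2]\setminus\{0\})$ and then, since $D=D_{g_0}$ is dense, pick $h\in U_2\cap D_{g_0}$, giving $(g_0,h)\in P\cap(U_1\times U_2)$; by Baire $P$ is a dense $G_\delta$. Finally, to bound $t_I(G)\le 2$, let $V$ be any neighborhood of the identity; since $2^{-j}\to 0$ I may fix $j$ with $g_0:=2^{-j}\in V$, and as $D_{g_0}$ is dense it meets $V$, yielding $h\in V$ with $\la g_0,h\ra$ dense. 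Thus every identity neighborhood contains a topologically generating pair, so $t_I(G)\le 2$.
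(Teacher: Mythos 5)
Your proof is correct, and its core mechanism is genuinely different from the paper's, although both share the same Baire-category skeleton: reduce density of $\la g_0,h\ra$ to countably many open conditions, then verify each on a dense set of $h$ by perturbing a dyadic point by a small dyadic correction, smallness coming from $\frac 1{2^n}\to 0$. The paper's conditions are ``$\overline{\la g_0,h\ra}$ contains $\frac 1{2^n}$'' (enough, since these generate $\Z[1/2]$); it takes $h=h_0+\beta/2^{m+N}$ and must solve $ug_0+vh=\frac 1{2^{m+N}}$ in integers, i.e.\ arrange $\gcd(2^Nk_1,2^Nk_2+\beta)=1$, which it does by a number-theoretic construction of one odd $\beta$ (from the prime factorizations of the numerators) that works for all $N$. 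You instead target each basic open set $B_k$ directly and make the coefficient of $h$ a power of $2$: since $2$ is invertible in $\Z[1/2]$, the equation $2^Lh+mg_0=\beta\in B_k$ can be solved exactly for $h$ with no coprimality at all; the price is that your correction $\xi_L/2^L$ has numerator varying with $L$, which you tame via the division-algorithm bound $\abs{\xi_L}\leq\abs a/2^{b+1}$ and the bounded-denominator remark, so $\xi_L$ lives in a fixed finite set and $\xi_L/2^L\to 0$ uniformly. Your route trades the paper's elementary number theory for an elementary ``remainder in a finite set'' argument and would adapt verbatim to, say, $\Z[1/p]$; the paper's yields a cleaner explicit perturbation with a single $\beta$ independent of $N$. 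You are also right to exclude $g_0=0$: the statement as literally written fails there (no monothetic subgroup of $\R$ is dense), the paper's own proof tacitly needs $k_1\neq 0$ (otherwise its list of odd prime divisors of $k_1$ is infinite and the Bezout equation is unsolvable), and, as you note, the exclusion costs nothing for the ``in particular'' conclusions.
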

\begin{proof}
Let $g_0\in\Z[1/2]$. In order for a couple $(g_0,h)\in G^2$ to generate a dense subgroup of $G$, it suffices for the closed subgroup they generate to contain $\frac 1{2^n}$ for every $n\in\N$, for the group $\Z[1/2]$ is dense in $G$. So the set $T:=\{h\in G^2:\overline{\la g_0,h\ra}=G\}$ may be written as a countable intersection $T=\bigcap_{n\in\N}T_n$, where
$$T_n:=\left\{h\in G: \frac 1{2^n}\in\overline{\la g_0,h\ra}\right\}.$$
Since $G$ is Polish the set $T_n$ is $G_\delta$, so we only need to show that $T_n$ is dense. To this end, fix $\epsilon>0$, $n\in\N$, and let $h_0\in \Z[1/2]$. We want to find $h\in T_n$ such that $d(h,h_0)<\epsilon$, where $d$ is a fixed compatible metric on $G$. 

Write $g_0=\frac{k_1}{2^{m}}$ and $h_0=\frac{k_2}{2^{m}}$, where $k_1, k_2\in \Z$ and $m\in\N$. We will find $\beta\in\N$ such that for all $N\in\N$, if $h=h_0+\frac \beta{2^{m+N}}$, then $\la g_0,h\ra$ contains $\frac 1{2^{N+m}}$, so that in particular it contains $\frac 1{2^n}$ as soon as $N+m\geq n$. The group $\la g_0,h\ra$  contains $\frac 1{2^{N+m}}$ if and only if we can find $u,v\in\Z$ such that $ug_0+vh=\frac 1{2^{m+N}}$. This condition can be rewritten as
$$2^Nk_1u+(2^{N}k_2+\beta)v=1.$$
So we want to find $\beta\in\N$ such that for all $N\in\N$ we have that $2^Nk_1$ and $2^{N}k_2+\beta$ are relatively prime. Let us furthermore ask that $\beta$ is odd, so that we only have to make sure that every odd prime divisor of $k_1$ does not divide $2^Nk_2+\beta$.

Let $p_1,...,p_k$ list the odd primes which divide both $k_1$ and $k_2$, while $p_{k+1},...,p_l$ are the odd primes which divide $k_1$ but not $k_2$. Then it is easily checked that $\beta=(2+p_{1}\cdots p_k)p_{k+1}\cdots p_l$ works: for all $i\leq k$ we have that $\beta$ is invertible modulo $p_i$ and $p_i$ divides $k_2$ so that $2^Nk_2+\beta$ is not divisible by $p_i$, while for $k<i\leq l$, $\beta$ is null modulo $p_i$ while $2^Nk_2$ is invertible so that $2^Nk_2+\beta$ is not divisible by $p_i$. 

But then, since $\frac 1{2^{m+N}}$ tends to zero as $N$ tends to $+\infty$, we also have $\frac \beta{2^{m+N}}\to 0$ $[N\to+\infty]$. Then, as explained before, the group generated by $g:=g_0$ and $h:=h_0+\frac\beta{2^{m+N}}$ contains $\frac1{2^{m+N}}$, so that $(g,h)\in T_n$, while $0=d(g,g_0)<\epsilon$ and $d(h,h_0)<\epsilon$ if $N$ was chosen large enough. 

So every $T_n$ is a dense subset of $G$, which ends the proof since this furthermore shows that the set of couples  generating a dense subgroup of $G$ is dense in $G^2$ and this set has to be a  $G_\delta$. 
\end{proof}

Let us now apply the previous theorem and describe Steven's examples of Polish groups which are infinitesimally finitely generated but totally disconnected. These groups arise as a Polishable subgroups of the real line, constructed by taking a completion of the dyadic rationals with respect to a well chosen norm which makes $2^{-n}$ have much bigger norm than usually.

We fix a biinfinite sequence of positive real number $(r_i)_{i\in\Z}$ such that $r_i\to 0 [i\to+\infty]$ and for all $i\in\Z$, we have $r_{i+1}\leq r_i\leq 2r_{i+1}$. Then one can define the following group norm\footnote{A norm on an abelian group is a function $\abs\cdot: G\to [0,+\infty)$ such that for any $x,y\in G$, $\abs{x+y}\leq \abs x+\abs y$, and $\abs x=\abs{-x}$.} 
$\norm\cdot$ on the ring $\Z[1/2]$ of dyadic rationals: for every $x\in\Z[1/2]$,
$$\norm x:= \inf\left\{\sum_{i=-n}^n \abs{a_i} r_i: x=\sum_{i=-n}^n a_i2^{-i}, a_i\in\Z, n\in\N\right\}.$$
It is easy to check that this defines a group norm on $\Z[1/2]$ which refines the usual norm. Using the fact that $r_i\leq 2r_{i+1}$, one can easily show that for all $x\in\Z[1/2]$,
$$
 \norm x= \inf\left\{\sum_{i=-n}^n \abs{a_i} r_i: x=\sum_{i=-n}^n a_i2^{-i}, a_i\in\{-1,0,1\}, n\in\N\right\}.
$$
In particular, we see that for all $n\in\N$, we have $\norm{2^{-n}}=r_n$ so that $2^{-n}\to 0$ as $n\to+\infty$. Let $\overline{\Z[1/2]}^{\norm\cdot}$ denote the completion of $\Z[1/2]$ with respect to this norm. Since this norm refines the usual norm, $\overline{\Z[1/2]}^{\norm\cdot}$ is a subgroup of $\R$. Stevens explicitely described the elements of $\R$ belonging to $\overline{\Z[1/2]}^{\norm\cdot}$ and showed that the group $\overline{\Z[1/2]}^{\norm\cdot}$ is infinitesimally generated \cite[Thm. 2.1 (ii)]{zbMATH03966486}, and we see that Theorem \ref{thm: Z 1/2 is lt2g} strengthens this because it implies that $\overline{\Z[1/2]}^{\norm\cdot}$ is infinitesimally $2$-generated.

To obtain totally disconnected examples, we need another result of Stevens stating that the following are equivalent (see \cite[Thm. 2.2]{zbMATH03966486}):
\begin{enumerate}[(i)]
\item $\sum_{i\in\N} r_i=+\infty$,
\item $\norm\cdot$ is not equivalent to $\abs\cdot$ when restricted to $\Z[1/2]$,
\item $\Q\cap \overline{\Z[1/2]}^{\norm\cdot}=\Z[1/2]$,
\item $\overline{\Z[1/2]}^{\norm\cdot}$ is totally disconnected,
\end{enumerate}

\begin{rmq}
Note that every subgroup $G$ of $\R$ which is not equal to $\R$ has to be totally disconnected for the induced topology, since its complement is dense in $\R$ so that the sets of the form $]r,+\infty[$ for $r\in \R\setminus G$ are clopen in $G$. In particular, if $G$ is a proper subgroup of $\R$ equipped with a topology which refines the usual topology of $\R$ then $G$ is totally disconnected. 
So conditions (ii) and condition (iii) clearly imply condition (iv).
\end{rmq}

So suppose further that $\sum_{i\in\N} r_i=+\infty$  (e.g. take $r_i=1$ if $i\leq 0$ and $r_i=\frac 1i$ otherwise). Then we see that $\overline{\Z[1/2]}^{\norm\cdot}$ is a totally disconnected Polish group which has infinitesimal rank at most $2$. Moreover since this group is a subgroup of the real line endowed with a finer topology (see \cite[Thm. 2.1]{zbMATH03966486}) it cannot be monothetic, so its infinitesimal rank is actually equal to $2$.

\begin{cor}\label{cor: there is td not QNA}There exists a totally disconnected Polish group which has infinitesimal rank $2$, in particular there is a totally disconnected Polish group which is not quasi non-archimedean. 
\end{cor}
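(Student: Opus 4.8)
The plan is to assemble the pieces established in the preceding discussion and then invoke Corollary \ref{cor: qna and ltfg are disjoint}. First I would fix a biinfinite sequence $(r_i)_{i\in\Z}$ satisfying the standing hypotheses together with $\sum_{i\in\N} r_i=+\infty$ (for instance $r_i=1$ for $i\leq 0$ and $r_i=\frac1i$ otherwise), and set $G:=\overline{\Z[1/2]}^{\norm\cdot}$. By construction $\Z[1/2]$ is a dense subgroup of $G$ and $\norm{2^{-n}}=r_n\to 0$, so Theorem \ref{thm: Z 1/2 is lt2g} applies and yields $t_I(G)\leq 2$. On the other hand, the divergence $\sum_{i\in\N} r_i=+\infty$ is exactly condition (i) in Stevens' list of equivalences, which is equivalent to condition (iv), so $G$ is totally disconnected. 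Thus $G$ is a totally disconnected Polish group of infinitesimal rank at most $2$.

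The one point requiring an argument is that the infinitesimal rank equals $2$, equivalently that $G$ is not monothetic. Here I would use that the topology of $G$ refines the topology it inherits from $\R$, so that for any subset $A$ one has $\overline{A}^{G}\subseteq\overline{A}^{\R}\cap G$ (a finer group topology can only shrink closures). If $G$ were monothetic, say $G=\overline{\la g\ra}^{G}$ for some $g\in G$, then (noting $g\neq 0$, since $\Z[1/2]\subseteq G$ is nontrivial) we would obtain $G=\overline{\la g\ra}^{G}\subseteq\overline{\la g\ra}^{\R}\cap G=g\Z\cap G\subseteq g\Z$, because the cyclic subgroup $g\Z$ is discrete, hence closed, in $\R$. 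But then $\Z[1/2]$ would be contained in the infinite cyclic group $g\Z$, forcing $\Z[1/2]$ to be cyclic. This is absurd, since $\Z[1/2]$ contains $2^{-n}$ for every $n$ and so is not finitely generated. Hence $t(G)\geq 2$, and combined with $t(G)\leq t_I(G)\leq 2$ this gives $t_I(G)=2$.

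Finally I would conclude. Since $t_I(G)=2<+\infty$, the group $G$ is infinitesimally finitely generated and nontrivial. By Corollary \ref{cor: qna and ltfg are disjoint}, the only group which is simultaneously infinitesimally finitely generated and quasi non-archimedean is the trivial group, so $G$ cannot be quasi non-archimedean. This produces the desired totally disconnected Polish group of infinitesimal rank $2$ which, in particular, fails to be quasi non-archimedean.

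As for the main obstacle: essentially all of the content is imported from Theorem \ref{thm: Z 1/2 is lt2g}, Stevens' equivalences, and Corollary \ref{cor: qna and ltfg are disjoint}, so the only genuinely new step is ruling out infinitesimal rank $1$, i.e.\ the non-monothetic argument. Even this reduces to the elementary observation that passing to a finer group topology can only shrink closures, together with the fact that $\Z[1/2]$ is not cyclic; so I do not expect any serious difficulty.
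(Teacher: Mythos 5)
Your proposal is correct and follows essentially the same route as the paper: Stevens' completion $\overline{\Z[1/2]}^{\norm\cdot}$ with $\sum_{i\in\N} r_i=+\infty$, Theorem \ref{thm: Z 1/2 is lt2g} for $t_I\leq 2$, Stevens' equivalences for total disconnectedness, ruling out monotheticity via the finer-than-$\R$ topology, and Corollary \ref{cor: qna and ltfg are disjoint} to exclude the quasi non-archimedean property. The only difference is that you spell out the non-monothetic step (closures shrink under finer topologies, $g\Z$ is closed in $\R$, and $\Z[1/2]$ is not cyclic), which the paper merely asserts with a reference to Stevens; this is a welcome elaboration, not a different argument.
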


\begin{rmq}
Note that one can see directly that Stevens' groups are not quasi non-archimedean, even for $n=1$. Indeed, if $U$ is a neighborhood of $0$ not containing $1$, then if $V$ is another neighborhood of $0$ there is some $N\in\N$ such that $1/2^N\in V$, but the group generated by $1/2^N$ contains $1$ hence it is not a subset of $U$.
\end{rmq}

We know that while the group of the reals has infinitesimal rank $2$, its quotient $\mathbb S^1=\R/\Z$ has infinitesimal rank $1$. The same is true of Stevens' examples, which is going to yield the following result.

\begin{thm}\label{thm:infrank1 but td}There exists a totally disconnected Polish group which has infinitesimal rank $1$.
\end{thm}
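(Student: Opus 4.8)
The plan is to mimic the relationship between $\R$ and $\mathbb S^1=\R/\Z$ by passing to a suitable quotient of Stevens' group $\overline{\Z[1/2]}^{\norm\cdot}$. Just as quotienting $\R$ by the cocompact lattice $\Z$ drops the infinitesimal rank from $2$ to $1$, I expect that quotienting Stevens' totally disconnected group by an appropriate discrete (or at least closed) subgroup will produce a totally disconnected Polish group of infinitesimal rank $1$. Concretely, take the sequence $(r_i)_{i\in\Z}$ with $\sum_{i\in\N}r_i=+\infty$ so that $G:=\overline{\Z[1/2]}^{\norm\cdot}$ is totally disconnected, and consider the quotient $\ol G:=G/\Z$, where $\Z$ sits inside $G$ as the integers. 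First I would check that $\Z$ is a closed discrete subgroup of $G$: since $\norm\cdot$ refines $\abs\cdot$, the integers remain discrete and closed, so $\ol G$ is again an abelian Polish group.

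Next I would verify the two key inherited properties. Total disconnectedness of $\ol G$ should follow because $G$ maps into $\mathbb S^1$ (via the finer-to-coarser comparison $\norm\cdot\geq c\abs\cdot$ on a neighborhood of $0$, the quotient $G/\Z$ continuously surjects onto $\R/\Z$), but more to the point one can argue directly that the clopen sets separating points in $G$ descend to clopen sets in $\ol G$, exactly as the Remark observes for proper subgroups of $\R$. The image of $\Z[1/2]$ in $\ol G$ is the dense subgroup $\Z[1/2]/\Z$, which is exactly the dyadic rationals in $\mathbb S^1$, and crucially the elements $2^{-n}$ still tend to $0$ in $\ol G$ since $\norm{2^{-n}}=r_n\to 0$.

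The main point is then to show $t_I(\ol G)=1$, i.e. that every neighborhood of the identity contains a single topological generator. Here I would exploit that in $\ol G$ the subgroup $\Z[1/2]/\Z$ is a \emph{torsion} group isomorphic to the Prüfer-type group $\Z[1/2]/\Z$, in which the single element of order $2^{n}$ given by the class of $2^{-n}$ generates all of $2^{-m}/\Z$ for $m\leq n$. The idea is that a single well-chosen element $g$ close to the identity can have closure containing every $2^{-n}+\Z$, hence containing the dense subgroup $\Z[1/2]/\Z$ and therefore all of $\ol G$. To produce such $g$ inside a prescribed neighborhood I would run a monothetic analogue of the argument in Theorem \ref{thm: Z 1/2 is lt2g}: rather than adjusting a second generator $h$, I would build $g$ as a $\norm\cdot$-convergent dyadic series $g=\sum_{k} a_k 2^{-n_k}$ with rapidly growing $n_k$ and carefully chosen odd numerators so that finite multiples of $g$ recover each $2^{-n}+\Z$ modulo higher-order tails that vanish in $\ol G$; the cocompactness by $\Z$ is what makes this possible in the quotient whereas it failed in $G$ itself.

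The hard part will be this last monothetic construction: establishing that a single element can topologically generate $\ol G$ while living in an arbitrarily small neighborhood of the identity, since one no longer has the freedom of a second generator to kill unwanted primes as in Theorem \ref{thm: Z 1/2 is lt2g}. I would handle it by a Baire-category argument paralleling that theorem, writing the set of topological generators as a countable intersection $\bigcap_n T_n$ with $T_n=\{g\in\ol G:\,2^{-n}+\Z\in\overline{\la g\ra}\}$, showing each $T_n$ is $G_\delta$, and proving density of $T_n$ by a number-theoretic choice of numerator analogous to the $\beta=(2+p_1\cdots p_k)p_{k+1}\cdots p_l$ trick, now solving a single congruence $ug\equiv 2^{-n}\pmod{\Z}$ in the quotient. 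Total disconnectedness combined with $t_I(\ol G)=1$ then gives the desired totally disconnected Polish group of infinitesimal rank exactly $1$.
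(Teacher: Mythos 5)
Your construction is exactly the paper's: form $\tilde G:=G/\Z$ where $G$ is Stevens' totally disconnected group, and prove total disconnectedness via the arc argument (translates of $p([0,1/2[)$ by points of $\mathbb S^1\setminus\tilde G$ give clopen sets separating points). One caution here: your first suggestion, that clopen sets of $G$ ``descend'' to clopen sets of $\tilde G$, is false as stated --- a clopen subset of $G$ need not be $\Z$-invariant, so it induces nothing in the quotient; it is the arc argument, which you also mention, that does the work. Where you genuinely diverge is the rank-one step. The paper gets it for free from Theorem \ref{thm: Z 1/2 is lt2g}: that theorem holds for an \emph{arbitrary} $g_0\in\Z[1/2]$, so take $g_0=1$. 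There is then a dense $G_\delta$ of $h\in G$ with $\overline{\la 1,h\ra}=G$; since $1$ becomes trivial in $\tilde G=G/\Z$ and the quotient map sends dense sets to dense sets, each such $h$ projects to a \emph{single} topological generator of $\tilde G$, and the set of such generators is dense. No new density argument is needed --- the second generator of the pair was chosen in advance to die in the quotient.

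Your alternative --- rerunning the Baire-category argument inside $\tilde G$ with $T_n=\{g: 2^{-n}+\Z\in\overline{\la g\ra}\}$ --- does work, but the step you flag as ``the hard part'' is in fact \emph{easier} than its analogue in Theorem \ref{thm: Z 1/2 is lt2g}, not harder: modulo $\Z$ the only prime that matters is $2$. Concretely, if $h_0=k_2/2^{m}+\Z$ and $g=h_0+\beta/2^{m+N}+\Z$ with $\beta$ odd (even $\beta=1$ works), then the numerator $2^Nk_2+\beta$ is odd, hence invertible modulo $2^{m+N}$, so some integer multiple of $g$ equals $2^{-(m+N)}+\Z$ \emph{exactly}; this element then generates every $2^{-j}+\Z$ for $j\leq m+N$. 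The prime-killing choice $\beta=(2+p_1\cdots p_k)p_{k+1}\cdots p_l$, the convergent-series construction of $g$, and any limiting process are all unnecessary. Finally, correct one piece of your intuition: $\Z$ is \emph{not} cocompact in $G$ (if $\tilde G$ were compact it would be closed in $\mathbb S^1$, hence equal to $\mathbb S^1$, contradicting properness); what makes one generator suffice in the quotient is that $\Z[1/2]/\Z$ is torsion --- equivalently, that the generator $1$ used in the paper's pair becomes trivial there.
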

\begin{proof}
Let $G$ be a totally disconnected Polish group obtained by Stevens' construction from a sequence ; then $G$ is a proper subgroup of $\mathbb R$ containing $\Z[1/2]$ as a dense subgroup. Observe that $\Z$ is a discrete subgroup of $G$ and we may thus form the Polish group $\tilde G:=G/\Z$.

The group $\tilde G$ is a proper dense subgroup of $\mathbb S^1=\R/\Z$, so $\mathbb S^1\setminus \tilde G$ is thus dense in $\mathbb S^1$. Let $A=p([0,1/2[)$ where $p: \R\to \R/\Z$ is the usual projection, then for all $g\in\mathbb S^1\setminus \tilde G$ the set $(g+A)\cap \tilde G$ is clopen in $\tilde G$. Moreover since $\mathbb S^1\setminus \tilde G$ is dense in $\mathbb S^1$ the family of sets $((g+A)\cap \tilde G)_{g\in \mathbb S^1\setminus \tilde G}$ separates points in $\tilde G$, so $\tilde G$ is totally disconnected. 

Furthermore, we have by Theorem \ref{thm: Z 1/2 is lt2g} that there is a dense $G_\delta$ of $h\in G$ such that the group generated by $1$ and $h$ is dense in $G$. Since $1\in\Z$ and $\tilde G=G/\Z$ we conclude that there is a dense $G_\delta$ of $h\in\tilde G$ which generate a dense subgroup in $\tilde G$, in particular $\tilde G$ has infinitesimal rank $1$.
\end{proof}

We don't know an example of a totally disconnected Polish group which is infinitesimally generated and quasi non-archimedean. Moreover, we want to stress out that all the examples we know of Polish groups which are quasi non-archimedean actually fail the property even for $n=1$, so it would be very interesting to have examples having a ‘‘non-QNA rank'' greater than $1$.

\section{Further remarks and questions}\label{sec:remarks}

Let us point out how one can easily  build Polish groups into which no non-discrete locally compact group can embed.

\begin{lem}Let $\Gamma$ be a countable discrete group without elements of finite order. Then every monothetic subgroup of $\LL^0(X,\mu,\Gamma)$ is infinite discrete. In particular, no nontrivial compact group embeds into $\LL^0(X,\mu,\Gamma)$.
\end{lem}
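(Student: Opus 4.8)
The plan is to show directly that for any nontrivial $f\in\LL^0(X,\mu,\Gamma)$ the cyclic group $\la f\ra$ is already closed, discrete, and isomorphic to $\Z$, so that the monothetic subgroup it generates is infinite discrete. Since $\Gamma$ is discrete, the topology of convergence in measure on $\LL^0(X,\mu,\Gamma)$ is induced by the left-invariant metric $d(f,g)=\mu(\{x: f(x)\neq g(x)\})$, so that $d(\mathrm{id},f)=\mu(\supp f)$, where $\supp f=\{x: f(x)\neq e\}$ and $\mathrm{id}$ denotes the constant map equal to the neutral element $e$ of $\Gamma$.

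The key observation is that torsion-freeness of $\Gamma$ rigidifies the support of every power. Computing powers pointwise, $f^n(x)=f(x)^n$; since $\Gamma$ has no nontrivial element of finite order, $f(x)\neq e$ implies $f(x)^n\neq e$ for every $n\neq 0$. Hence $\supp f^n=\supp f$ for all $n\neq 0$, and therefore $d(\mathrm{id},f^n)=\mu(\supp f)=:a$ is a positive constant independent of $n\neq 0$. In particular the powers are pairwise distinct (for $n\neq m$ one has $f^{n-m}\neq\mathrm{id}$, whence $d(f^n,f^m)=a>0$), so $f$ has infinite order and $\la f\ra\cong\Z$; moreover $\mathrm{id}$ is isolated in $\la f\ra$, as witnessed by any ball of radius $a/2$, so $\la f\ra$ is discrete.

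It remains to see that $\la f\ra$ is closed, which, the ambient space being metrizable, amounts to sequential closedness. Here I would exploit left-invariance: if $f^{n_k}\to g$ then $(f^{n_k})_k$ is Cauchy, so $d(\mathrm{id},f^{n_k-n_l})=d(f^{n_k},f^{n_l})\to 0$ as $k,l\to\infty$; by the isolation of $\mathrm{id}$ this forces $f^{n_k}=f^{n_l}$ for all large $k,l$, so the sequence is eventually constant and $g\in\la f\ra$. Thus $\overline{\la f\ra}=\la f\ra$ is infinite discrete.

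Finally, for the ``in particular'' statement, suppose $K\le\LL^0(X,\mu,\Gamma)$ is a nontrivial compact subgroup and pick $k\in K$ with $k\neq\mathrm{id}$. Then $\overline{\la k\ra}$ is a closed subgroup of $K$, hence compact; but by the above it is infinite and discrete, which is absurd. The only substantive step—and the sole place where the hypothesis on $\Gamma$ enters—is the rigidity of supports under taking powers; everything else is the standard fact that a subgroup with isolated identity in a group carrying a left-invariant metric is automatically closed and discrete.
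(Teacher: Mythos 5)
Your proof is correct and follows essentially the same route as the paper's: torsion-freeness of $\Gamma$ forces the support of $f^n$ to coincide with (in the paper's version, to contain a fixed non-null piece of) the support of $f$, yielding a uniform positive lower bound on $d(\mathrm{id},f^n)$ for $n\neq 0$ and hence discreteness of $\la f \ra$. The additional details you supply --- closedness of $\la f \ra$ via the invariant metric and the explicit compactness contradiction --- are exactly the steps the paper leaves implicit, so your write-up is a fleshed-out version of the same argument rather than a different one.
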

\begin{proof}
Given $1\neq f\in\LL^0(X,\mu,\Gamma)$, find $A\subseteq X$ non-null and $\gamma\in\Gamma\setminus\{1\}$ such that $f_{\restriction A}$ is constant equal to $\gamma$. By assumption, for all $n\in\Z\setminus \{0\}$, the support of $f^n$ contains $A$, and so $\la f\ra$ is discrete. 
\end{proof}

\begin{thm} Let $\Gamma$ be a countable discrete group without elements of finite order, and let $G$ be a separable locally compact group. Then every continuous morphism $G\to \LL^0(X,\mu,\Gamma)$ factors through a discrete group.
\end{thm}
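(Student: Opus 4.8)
The plan is to prove that the kernel of $\varphi$ is open in $G$; once this is established, $G/\ker\varphi$ is a discrete group and $\varphi$ factors as the composite $G\twoheadrightarrow G/\ker\varphi\hookrightarrow \LL^0(X,\mu,\Gamma)$, which is exactly the asserted factorization through a discrete group. To produce an open subgroup killed by $\varphi$, I would treat the connected and the totally disconnected parts of $G$ separately, exploiting that the identity component $G^0$ is a closed (hence separable by \cite{MR0447455}) connected locally compact subgroup, while $G/G^0$ is a totally disconnected locally compact group.

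First I would dispose of the connected part. Since $\Gamma$ is countable discrete, it is a non-archimedean Polish group and in particular quasi non-archimedean; by Proposition \ref{prop:embed in QNA} the group $\LL^0(X,\mu,\Gamma)$ is then itself quasi non-archimedean. On the other hand $G^0$ is connected separable locally compact, hence infinitesimally finitely generated by Theorem \ref{thm:infinitesimally finitely generated2}. Applying Proposition \ref{prop:QNAtrivial} to the restriction $\varphi|_{G^0}$ forces it to be trivial, i.e. $G^0\subseteq\ker\varphi$. Consequently $\varphi$ descends to a continuous homomorphism $\bar\varphi\colon G/G^0\to\LL^0(X,\mu,\Gamma)$.

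Next I would handle the totally disconnected quotient. As $G/G^0$ is totally disconnected and locally compact, van Dantzig's theorem supplies a compact open subgroup $K\leq G/G^0$. Its image $\bar\varphi(K)$ is a compact subgroup of $\LL^0(X,\mu,\Gamma)$, being the continuous image of a compact group; by the preceding Lemma no nontrivial compact group embeds into $\LL^0(X,\mu,\Gamma)$, so $\bar\varphi(K)=\{1\}$ and therefore $K\subseteq\ker\bar\varphi$. Since $K$ is open, $\ker\bar\varphi$ is open in $G/G^0$, and pulling back along the continuous quotient map $\pi\colon G\to G/G^0$ shows that $\ker\varphi=\pi^{-1}(\ker\bar\varphi)$ is open in $G$, which yields the factorization described above.

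I expect the only real subtlety to be that $G/G^0$, although totally disconnected, need not be discrete (it may look like $\mathbb Z_p$), so triviality of $\varphi$ on $G^0$ does not by itself make $\ker\varphi$ open; the decisive extra input is precisely that compact subgroups of $\LL^0(X,\mu,\Gamma)$ are trivial, which is where the torsion-freeness of $\Gamma$ enters through the Lemma. Everything else is a routine assembly of Theorem \ref{thm:infinitesimally finitely generated2}, Propositions \ref{prop:embed in QNA} and \ref{prop:QNAtrivial}, and van Dantzig's theorem.
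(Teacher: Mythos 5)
Your proposal is correct and follows essentially the same route as the paper's own proof: kill $G^0$ using quasi non-archimedeanness of $\LL^0(X,\mu,\Gamma)$ (via Proposition \ref{prop:embed in QNA}, Theorem \ref{thm:infinitesimally finitely generated2} and Proposition \ref{prop:QNAtrivial}), then apply van Dantzig's theorem together with the lemma on compact subgroups to get an open subgroup inside the kernel of the induced map on $G/G^0$. Your write-up merely makes explicit some details the paper leaves implicit (separability of $G^0$, and the passage from an open subgroup in the kernel to openness of the kernel itself).
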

\begin{proof}
Let $G^0$ be the connected component of the identity. Because $\LL^0(X,\mu,\Gamma)$ is quasi non-archimedean, $\pi$ factors through $G/G^0$ by Proposition \ref{prop:QNAtrivial} and Theorem \ref{thm:infinitesimally finitely generated}. Then by van Dantzig's theorem and the previous lemma, the kernel of the later map contains an  open subgroup of $G/G^0$, hence it factors through a discrete group.
\end{proof}

\begin{qu}Is there a Polish group without any locally compact closed subgroup?
\end{qu}

The group $\LL^0(X,\mu,G)$ was originally introduced to show that every Polish group embeds into a connected group, and we saw that being quasi non-archimedean is somehow opposite to being connected. Because $\LL^0(X,\mu,G)$ can be quasi non-archimedean, one may ask whether every Polish group embeds into a connected not quasi non-archimedean group. The isometry group of the Urysohn space answers this question --- it is universal for Polish groups, connected (see \cite{zbMATH05012574,zbMATH05691892} for stronger versions of these results as well as background on the Urysohn space) and cannot be quasi non-archimedean by universality. 

However, the same question can be asked replacing not quasi non-archimedean by infinitesimally finitely generated. It seems to be open wether the isometry group of the Urysohn space is infinitesimally finitely generated (it is topologically $2$-generated by a result of Solecki, see \cite{MR2255813}).

\begin{qu} Is the isometry group of the Urysohn space infinitesimally $2$-generated?
\end{qu}

Let us end this paper by mentioning a question related to ample generics. A Polish group $G$ has \textbf{ample generics} if the diagonal conjugacy action of $G$ onto $G^n$ has a comeager orbit for every $n\in\N$ (see \cite{MR2308230}). It has been recently discovered that there exists Polish groups with ample generics which are not non-archimedean (see\cite{MR3431579}  and \cite{Malicki:2015fj}). These examples arise either as full groups or as groups of the form $\mathfrak S_\lambda$, which are quasi non-archimedean groups by Theorem \ref{thm:AutQNA} and Proposition \ref{prop:Slambda QNA}. This motivates the following question. 

\begin{qu}Is there a Polish group which has ample generics, but which is not quasi non-archimedean?
\end{qu}


\bibliographystyle{alpha}
\bibliography{Feb2016}

\end{document}